\documentclass[12pt]{scrartcl}
 \usepackage{cmap}
 \usepackage{amsmath,amsxtra,amscd,amssymb,latexsym,stmaryrd,amsthm}
 \usepackage{mathrsfs, calc, xcolor, xfrac}

  \usepackage{lmodern}
  \usepackage[utf8]{inputenc}
  \usepackage[T1]{fontenc}

\usepackage[allbordercolors=white,colorlinks=true,backref]{hyperref}

\addtokomafont{title}{\rmfamily\itshape}

\theoremstyle{plain}
\newtheorem{theo}{Theorem}[section]
\newtheorem*{theo*}{Theorem}
\newtheorem{coro}[theo]{Corollary}
\newtheorem{prop}[theo]{Proposition}
\newtheorem{lemm}[theo]{Lemma}
\newtheorem{theomain}{Theorem}
\newtheorem{coromain}[theomain]{Corollary}

\theoremstyle{definition}
\newtheorem{defi}[theo]{Definition}
\newtheorem{rema}[theo]{Remark}
\newtheorem{exem}[theo]{Example}

\newtheoremstyle{step}
{\topsep}
{.3em}
{}
{0pt}
{\itshape}
{:}
{ }
{\thmname{#1}\thmnumber{ #2}\thmnote{ (#3)}}

\theoremstyle{step}
\newtheorem{step}{Step}[theo]

\renewcommand{\ge}{\geqslant}
\renewcommand{\le}{\leqslant}

\newcommand*{\dd}%
  {\relax\ifnum\lastnodetype>0\mskip\medmuskip\fi\mathrm{d}}
\newcommand{\fspace}[1]{\mathcal{#1}}
\newcommand{\fX}{\fspace{X}}

\newcommand{\op}[1]{\mathscr{#1}}
\newcommand{\one}{\boldsymbol{1}}

\newcommand{\mchain}{\mathsf}
\newcommand{\proba}{\operatorname{\mathcal{P}}}

\newcommand{\SG}{\operatorname{\mathrm{SG}}}

\newcommand{\diam}{\operatorname{diam}}
\newcommand{\wass}{\operatorname{W}}

\newcommand{\Id}{\operatorname{Id}}

\newcommand{\Hol}{\operatorname{Hol}}

\newcommand{\C}{\mathscr{C}}
\newcommand{\hl}{\operatorname{\tau}}

%
\newlength{\hypobox}
\setlength{\hypobox}{\textwidth}
\addtolength{\hypobox}{-3em}

\newlength{\gapbox}
\setlength{\gapbox}{\textwidth}
\addtolength{\gapbox}{-3.5em}

\newcounter{hypo}
\renewcommand{\thehypo}{\textup{(H\arabic{hypo})}}

\newcounter{hypop}
\renewcommand{\thehypop}{\textup{(H\arabic{hypop}')}}

\newcounter{gap}
\renewcommand{\thegap}{\textup{(SG\arabic{gap})}}

\title{An optimal transportation approach to the decay of correlations for non-uniformly expanding maps}

\author{Beno\^{\i}t R. Kloeckner \thanks{Universit\'e Paris-Est, Laboratoire d'Analyse et de Mat\'ematiques Appliqu\'ees (UMR 8050), UPEM, UPEC, CNRS, F-94010, Cr\'eteil, France}}

\begin{document}


\maketitle

\begin{abstract}
We consider the transfer operators of non-uniformly expanding maps for potentials of various regularity, and show that a specific property of potentials (``flatness'') implies a Ruelle-Perron-Frobenius Theorem and a decay of the transfer operator of the same speed than entailed by the constant potential. The method relies neither on Markov partitions nor on inducing, but on functional analysis and duality, through the simplest principles of optimal transportation.

As an application, we notably show that for any map of the circle which is expanding outside an arbitrarily flat neutral point, the set of H\"older potentials exhibiting a spectral gap is dense in the uniform topology. The method applies in a variety of situation, including Pomeau-Manneville maps with regular enough potentials, or uniformly expanding maps of low regularity with their natural potential; we also recover in a united fashion variants of several previous results.
\end{abstract}

\section*{Foreword}

The published article (\textit{Ergodic Theory Dynam. Systems} 40, 2020) contained a significant error in Lemma 2.14, used in the core Theorem 4.1. The present text is a consolidated version of the article, with the error corrected (and a few other minor points improved along the way). In order to fix the error, the assumption on coupling in \ref{theo:core} needs to be slightly modified (see also Definition \ref{defi:stableDecay}) and Lemma 2.14 (now numbered \ref{lemm:unibound}) has been adjusted. Section \ref{ssec:stableCriterion} provides a criterion to ensure this new hypothesis in our cases of interest, so that all other results are unaffected. I apologize to readers of the previous version for this embarrassing mistake, and warmly thank Manuel Stadlbauer for pointing out this error to me.

\section{Introduction}

Chaotic dynamical systems are characterized by the unpredictability of individual orbits; their study has thus largely focused on the statistical behavior of typical orbits. The ergodic theorem, for example, is simply a law of large numbers for observables evaluated along an orbit. In many cases, it was realized that strong chaotic properties provide nice statistical properties just as much as they prevent the prediction of individual orbits. 

An important example of this phenomenon is provided by expanding maps $T:\Omega\to\Omega$ ($\Omega$ being e.g. a torus or a symbolic space). Close points being pulled apart by a factor, imprecision grows exponentially fast along an orbit; but if one wants to understand the statistical properties of an orbit, the order in which the points are considered does not matter. One can thus look at the orbit backward: each point $x_{t+1}$ is then followed by one of its inverse image $x_t \in T^{-1}(x_{t+1})$. This time reversal transforms the expanding property into a contraction property, but a choice between several inverse images is added. By making this choice randomly, one gets a Markov chain with a contraction property that can be used to prove nice statistical properties, which in turn can be translated for the original map. The \emph{thermodynamical formalism} can be seen as a general framework in which to deal with such arguments; Markov chains are extended to more general averaging operators acting on a space of observables, called \emph{transfer operators}. These operators $\op{L}_{T,A}$ are constructed using the map $T$ and a weight function $A$, called a potential, each (good enough) potential leading to an invariant measure $\mu_A$. This measure is called here a Ruelle-Perron-Frobenius (RPF) measure\footnote{The common name \emph{Gibbs measure} conflicts with other, related but different, notions.} because it is obtained by proving a Ruelle-Perron-Frobenius for the transfer operator. The game then consists in finding suitable hypotheses on the potential and observables ensuring fine properties of the transfer operator, which can be translated into the construction of $\mu_A$ and statistical properties for $\mu_A$-almost all orbits.

The most favorable case is when the transfer operator has a spectral gap: then one gets exponential decay of correlations, most usual limit theorems, analyticity of the ``pressure'', etc. More generally, one is interested in the decay of correlations for the measure $\mu_A$, which can be related to decay of the action of the transfer operator on observables of vanishing $\mu_A$-average.

As is long-known, a spectral gap occurs when the map is uniformly expanding and the potential and observables are H\"older continuous. The case of non-uniformly expanding maps or of less regular potential has then become the object of intense scrutiny. The main goal of the present article is to generalize a method first used in the uniformly expanding case in \cite{KLS}, to prove decay results for transfer operator for non-uniformly expanding maps and potentials and observables of various moduli of continuity (the most classical ones being $\alpha$-H\"older continuity). Decay of transfer operators in particular implies decay of correlations, but is stronger.

Compared to \cite{KLS}, our main contributions are to introduce a simple yet effective framework to deal with various rates of decay in a uniform fashion (Section \ref{ssec:rates}), to give a more central place to Markov chains, and to identify a simple condition on potentials, which we call flatness (Section \ref{ssec:flat}), under which the methods of \cite{KLS} can be expanded to prove a Ruelle-Perron-Frobenius theorem and decay properties.

We use an approach introduced by Denker and Urbansky \cite{DenkerUrbanski:1991b} to prove the Ruelle-Perron-Frobenius Theorem, which enables us to ``normalize'' the potential and to relate the dual operator $\op{L}_{T,A}^*$ to a Markov transition kernel. Then we use coupling arguments to prove a general convergence results for a weighted transition kernel in term of the convergence of the unweighted kernel. This is done using optimal transportation as a powerful tool to implicitly combine couplings. We obtain a spectral gap (and thus exponential decay of correlations) in many cases, but our methods also enable us to obtain polynomial decay in cases where a spectral gap is unlikely to occur. The same methods can be used to obtain intermediate or slower decay rates in suitable cases.

\subsection{Results with a spectral gap}

While the method is relatively general, we will largely focus in this introduction on an emblematic case, the Pomeau-Manneville family, and to similar maps. Given an exponent $q>0$, we consider on the circle $\mathbb{T}=\mathbb{R}/\mathbb{Z}$ parametrized by $[0,1)$ the map
\begin{align*}
T_q : \mathbb{T} &\to \mathbb{T} \\
  x &\mapsto \begin{cases} 
    \big(1+(2x)^q \big)x &\text{if }x\in [0,\frac12] \\ 
    2x-1 &\text{if } x\in [\frac12, 1)
             \end{cases}
\end{align*}

We denote by $\C^\alpha(\mathbb{T})$ the Banach space of $\alpha$-H\"older real functions, with the convention that $\C^1$ means Lipschitz rather than continuously differentiable. Our first result is a slight variant of a theorem of Li and Rivera-Letelier (see below).
\begin{theomain}[Spectral gaps in the Pomeau-Manneville family]\label{theo:mainHol}
If $q<1$, for any $\alpha, \gamma\in (0,1)$ such that $\gamma-\alpha\ge q$ we let $V= \C^\gamma(\mathbb{T})$;
if $q\ge 1$, for any $\gamma>q$ and $\alpha\in(0,1]$ such that $\gamma+1-\alpha\ge q$, we let $V$ be the linear subspace of $\C^\alpha(\mathbb{T})$ made of potentials $A$ that are continuously differentiable near $0$ and such that $A'(r)=O_{r\to0}(r^\gamma)$.

In both cases, for any potential $A\in V$ the transfer operator $\op{L}_{T_q,A}$ acting on $\C^{\alpha}(\mathbb{T})$ satisfies a Ruelle-Perron-Frobenius theorem with a spectral gap. In particular the RPF measure $\mu_A$ has exponential decay of correlations for all H\"older observables.\footnote{Of \emph{any} H\"older exponent: indeed as $\alpha$ decreases the space $\C^\alpha(\mathbb{T})$ becomes \emph{larger}; the loss in taking $\alpha$ small in not in the observables for which one gets exponential decay of correlation (on the contrary), but in the norm for which the transfer operator decays.}
\end{theomain}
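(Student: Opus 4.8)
The plan is to verify that the Pomeau–Manneville map $T_q$ together with any potential $A\in V$ fits the abstract hypotheses of the general machinery (flatness plus the structural assumptions (H1), (H2),\dots and the contraction-rate condition (SG$\cdot$)) developed in the body of the paper, and then simply invoke the general Ruelle–Perron–Frobenius-with-spectral-gap theorem. So the proof is essentially a verification, split according to whether $q<1$ (the map is uniformly expanding away from the neutral fixed point $0$ but still globally ``non-uniformly expanding'' because $T_q'(0)=1$) or $q\ge 1$ (the neutral point is even flatter). First I would record the basic geometry: $T_q$ has two full inverse branches, the ``good'' branch $y_1\colon\mathbb{T}\to[\tfrac12,1)$ which is the affine map $y\mapsto\tfrac{y+1}{2}$ and is a uniform contraction by $\tfrac12$, and the ``bad'' branch $y_0\colon\mathbb{T}\to[0,\tfrac12]$ which is a contraction but with $y_0'(0)=1$, i.e.\ it contracts less and less as one approaches $0$. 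The key quantitative fact is that near $0$ one has $y_0(r)=r-c\,r^{q+1}+o(r^{q+1})$ for a constant $c>0$, so that $y_0(r)\sim r$ but $r-y_0(r)\asymp r^{q+1}$.

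Next I would check flatness of the potential. The point of the hypotheses $\gamma-\alpha\ge q$ (for $q<1$) and $\gamma+1-\alpha\ge q$ with the $A'(r)=O(r^\gamma)$ condition (for $q\ge1$) is exactly to guarantee that, along the bad branch, the oscillation of $A$ between $y_0(x)$ and $y_0(x')$ — more precisely the relevant Birkhoff-type sums of differences $A(y_0 z)-A(y_0 z')$ accumulated near the neutral point — is controlled in terms of the modulus of continuity $d(z,z')^\alpha$ by a summable/flat amount. Concretely, for $z,z'$ near $0$, $d(y_0 z, y_0 z')\le d(z,z')$ but $d(y_0^n z, y_0^n z')$ decays only polynomially like $n^{-1/q}$; the H\"older bound on $A$ gives an error $\asymp d(y_0^n z,y_0^n z')^\gamma \asymp n^{-\gamma/q}$ for the $\C^\gamma$ case, which combined with the extra $d(z,z')^{-\alpha}$ normalization in the definition of flatness produces a series $\sum_n n^{(\alpha-\gamma)/q}$ that converges precisely because $\gamma-\alpha\ge q$ (borderline case handled by a logarithmic refinement, which is why the inequality is $\ge q$ and not $>q$ — or one shows the flatness function tends to a finite limit). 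The $q\ge1$ case is the same computation but using $A'(r)=O(r^\gamma)$ to bound $|A(y_0^n z)-A(y_0^n z')|$ by $\int |A'|\,\asymp (n^{-1/q})^{\gamma}\cdot n^{-1/q}$ up to the cut-off, giving the exponent $\gamma+1$ in place of $\gamma$ and hence the condition $\gamma+1-\alpha\ge q$; here one also uses that $A\in\C^\alpha$ globally so the contribution away from $0$ is unproblematic.

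I would then verify the remaining structural hypotheses: that $T_q$ is topologically mixing and has bounded distortion of the ``transition weights'' away from $0$ (immediate, since the branches are smooth and expanding off any neighbourhood of $0$), that the number of preimages is constant ($=2$) so the unweighted operator behaves like a genuine Markov kernel once normalized, and that the contraction-rate condition (SG$\cdot$) for the unweighted kernel holds with an \emph{exponential} rate. The last point is where the real dynamical input lies for $q<1$: the unweighted kernel corresponds to choosing each of the two inverse branches with equal probability, and one must show that two copies of the resulting Markov chain can be coupled exponentially fast. This is a standard renewal-type estimate — the chain spends geometrically distributed excursions near $0$ along the bad branch, but each step there still contracts distances by a factor bounded below on compact subsets, and with probability $\tfrac12$ at each step it jumps via the good branch which contracts by $\tfrac12$; for $q<1$ the excursion lengths have exponential tails in the relevant sense and one gets an exponential Wasserstein contraction of the kernel, hence (SG1) or its exponential variant is satisfied.

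The main obstacle is precisely the interaction, in the flatness estimate, between the \emph{polynomial} rate at which the bad branch contracts distances near the neutral point and the \emph{H\"older exponent} one is allowed to spend on the potential: getting the borderline condition $\gamma-\alpha\ge q$ (rather than a strict inequality) requires being careful that the series defining the flatness modulus converges — or rather, that the flatness function of $A$, which is an increasing function of the scale, stays bounded — which at the critical exponent is genuinely delicate and uses the precise asymptotics $r-y_0(r)\asymp r^{q+1}$ together with a summation-by-parts or integral-comparison argument. Once flatness is in hand, everything else is either classical (topological mixing, distortion) or has already been done abstractly in the paper (the coupling/optimal-transport argument converting unweighted exponential decay plus flatness into a spectral gap for $\op{L}_{T_q,A}$ on $\C^\alpha(\mathbb{T})$), and the stated corollary on exponential decay of correlations for $\mu_A$ is then immediate from the spectral gap.
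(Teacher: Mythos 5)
Your overall plan — verify weak contraction of the backward random walk and flatness of the potential, then invoke the abstract RPF-with-gap machinery — matches the paper's strategy (Sections 5--6, combining Lemma~\ref{lemm:flat}, Lemma~\ref{lemm:flatbis}, Lemma~\ref{lemm:cdecay} and Theorem~\ref{theo:nuem}). But there is a genuine gap in your flatness computation, and it concerns precisely the borderline case you flag as delicate.

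You claim the flatness estimate ``produces a series $\sum_n n^{(\alpha-\gamma)/q}$ that converges precisely because $\gamma-\alpha\ge q$'', and then propose a ``logarithmic refinement'' for the equality case. This is the wrong intermediate object, and the series you wrote down actually \emph{diverges} at the borderline $\gamma-\alpha = q$ (it is essentially harmonic). The correct quantity is $r$-dependent: with $r = d(x,y)$, Lemma~\ref{lemm:cdecay} gives $c^n(r) \le (an+r^{-q})^{-1/q}$, so the relevant sum is $\sum_{n\ge 1}(an+r^{-q})^{-\gamma/q}$. Convergence of this sum needs only $\gamma>q$; the sum is $\asymp r^{\gamma-q}$ by the integral comparison with lower limit $r^{-q}$, and the flatness condition asks that $r^{\gamma-q}\lesssim r^\alpha$ uniformly in $r\in(0,\diam\mathbb{T}]$, which is exactly $\gamma-q\ge\alpha$, i.e.\ $\gamma-\alpha\ge q$. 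No logarithmic refinement is needed; equality gives $r^{\gamma-q}=r^\alpha$, which is fine since flatness only requires a bound $\le C\omega\circ d(x,y)$. Dropping the $r^{-q}$ shift inside the sum — which is what your framing does when it leaves a pure power series in $n$ — loses precisely the information that makes the borderline work. The same structural issue recurs in your $q\ge 1$ sketch: the paper's actual argument (proof of the case $q\ge1$) splits the sum at $an\asymp r^{-q}$ and bounds the two pieces by $r$ and $r^{\gamma+1-q}$ respectively, again using the $r$-dependent cutoff rather than a fixed power series in $n$.

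Two smaller points: the hypothesis labels $(\mathrm{H}1),(\mathrm{H}2),\dots$ and $(\mathrm{SG}\cdot)$ you invoke do not correspond to anything stated in this paper — the abstract input here is ``weakly contracting transition kernel + natural coupling + flat potential'', packaged in Theorem~\ref{theo:nuem}, and there is no separate topological-mixing or bounded-distortion hypothesis to check. And the renewal/excursion picture you sketch for the exponential contraction of the unweighted kernel, while not wrong, is more elaborate than what the paper uses: Lemma~\ref{lemm:natural} gets exponential Wasserstein decay directly by averaging $d(x,y)\lambda^{-N(w)}$ over words $w$, which works for \emph{any} $q$ (the geometric distribution of bad-branch runs comes from the uniform random choice of branch, not from the dynamics), and then uses the doubling property $\omega(r/2)\le\theta\omega(r)$ of $\omega_{\alpha+\beta\log}$ with $\alpha>0$; the exponent $q$ plays no role in that step, only in flatness.
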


Our method is different from the method of Li and Rivera-Letelier who proved this spectral gap (in the Keller space rather than the H\"older space) in the case $q<1$, with the same assumption $\gamma>q$ on the H\"older exponent  \cite{Li-RL, Li-RLb}.

\begin{rema}\label{rema:acim}
As Li and Rivera-Letelier observe, the bound $\gamma>q$ is optimal in the case $q<1$. The first reason is obvious: the potential $\log 1/\lvert T_q' \rvert$ is $q$-H\"older and its RPF measure, which is the absolutely continuous invariant measure (Acim), only has polynomial decay of correlations. Below we will obtain some result with lower rate of decay, but they will still not be applicable to Acims of intermittent maps: we always use a Ruelle-Perron-Frobenius Theorem on a space of continuous functions, while the density of the Acim is unbounded at $0$ in this case.
\end{rema}

The present method can be applied to other types of neutral points; for example when the graph of $T$ is less tangent to the diagonal at the neutral fixed point, the loss of regularity between the potential and the space its transfer operator acts upon is lower. To state a precise form of a sample result in this direction, let us introduce a particular family of Banach spaces refining the H\"older family. Given $\alpha\in[0,1)$ and $\beta\in\mathbb{R}$, let $\C^{\alpha+\beta\log}(\Omega)$ be the Banach space of functions $\Omega\to\mathbb{R}$ with modulus of continuity at most a multiple of
\[\omega_{\alpha+\beta\log}(r) = \frac{r^\alpha}{\big(\log\frac{r_0}r \big)^\beta}\]
(see Lemma \ref{lemm:holderlog} for the definition of $r_0$ and Section \ref{ssec:modulus} for the definition of the norm). When $\beta$ is positive, this imposes slightly more regularity than $\alpha$-H\"older, and when $\beta$ is negative slightly less. When $\alpha=0$, we impose $\beta>0$ and write simply $\C^{\beta\log}(\Omega)$ (which is a \emph{very loose} modulus of continuity).
\begin{theomain}\label{theo:mainless-tangent}
For some $q>0$, consider the map
\begin{align*}
T_{q\log} : \mathbb{T} &\to \mathbb{T} \\
  x &\mapsto \begin{cases} 0 & \text{if } x=0 \\
             \big( 1+ (1-\log 2x)^{-q}\big)x & \text{if } x\in(0,\frac12]\\
             2x-1 &\text{if }x\in[\frac12,1).
             \end{cases}     
\end{align*}
For any $\alpha\in (0,1)$ and any $A\in V := \C^{\alpha+q\log}(\mathbb{T})$, the transfer operator $\op{L}_{T_{q\log},A}$ acting on $\C^{\alpha}(\mathbb{T})$ satisfies a Ruelle-Perron-Frobenius theorem with a spectral gap (in particular as soon as $A$ is H\"older of any exponent, $\mu_A$ enjoys exponential decay of correlations for all H\"older observables).
\end{theomain}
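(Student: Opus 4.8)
The plan is to verify that $T_{q\log}$ together with any potential $A\in\C^{\alpha+q\log}(\mathbb{T})$ fits the general machinery developed in the body of the paper: one sets up the transfer operator, normalizes it via the Denker--Urbański trick so that the dual operator becomes an honest Markov transition kernel, establishes that the unweighted kernel contracts at an exponential rate in the relevant Wasserstein-type metric, and then invokes the general theorem relating the decay of the weighted kernel to that of the unweighted one. Concretely, I would first record the basic geometry of $T_{q\log}$: the only obstruction to uniform expansion is the neutral fixed point at $0$, and the derivative behaves like $1+(1-\log 2x)^{-q}$ near $0$, so the time spent near $0$ by a backward orbit, and the contraction rates of the inverse branches, must be estimated. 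The key quantitative input is the modulus of continuity: the branch of $T_{q\log}^{-1}$ fixing $0$ contracts by a factor roughly $1-c(\log\frac1r)^{-q}$ at scale $r$, which is exactly matched by the extra $q\log$ regularity demanded of $A$; this matching is what makes the potential ``flat'' in the sense of Section~\ref{ssec:flat} relative to this map and this rate.

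The main steps, in order, are: (1) choose the ambient metric on $\mathbb{T}$ adapted to the modulus $\omega_{\alpha+q\log}$ (or rather to $\omega_\alpha$ on the target space $\C^\alpha$), so that the inverse branches become weak contractions and the neutral branch is a genuine, if slow, contraction quantified by the logarithmic factor; (2) check the hypotheses (the (H$\cdot$) conditions) of the abstract Ruelle--Perron--Frobenius statement — boundedness of the transfer operator on $\C^\alpha(\mathbb{T})$, the Lasota--Yorke-type inequality, and positivity/mixing of the associated Markov chain — using the derivative estimates from step (1); (3) verify the flatness condition on $A$, which is where the hypothesis $A\in\C^{\alpha+q\log}$ enters decisively: the oscillation of $A$ along a cylinder shrinking toward $0$ must be summable against the contraction rates, and the $q\log$ gain is precisely calibrated so that $\sum_n \omega_{\alpha+q\log}(\text{diam of }n\text{-cylinder at }0)<\infty$ with room to spare, yielding the needed control; (4) feed this into the general coupling/optimal-transport estimate to get a uniform contraction of the normalized dual operator, hence a spectral gap for $\op{L}_{T_{q\log},A}$ on $\C^\alpha(\mathbb{T})$; (5) deduce exponential decay of correlations for $\mu_A$ and all H\"older observables as an immediate corollary of the spectral gap, exactly as in Theorem~\ref{theo:mainHol}.

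The hard part will be step (3) combined with the sharp bookkeeping in step (1): one must pin down the asymptotics of the $n$-th iterate of the neutral inverse branch — showing that the diameter of the $n$-cylinder containing $0$ decays like $\exp(-c\,n^{1/(q+1)})$ or a comparable stretched-exponential/logarithmic rate — and then confirm that this decay rate, paired with the modulus $\omega_{\alpha+q\log}$, lands on the side of the dichotomy that gives a genuine spectral gap rather than merely polynomial decay. This is a delicate but essentially computational estimate on the orbit of the map $x\mapsto x/(1+(1-\log 2x)^{-q})$ near $0$; once it is in hand, everything else is a matter of quoting the abstract results. I would also need to handle the routine but non-trivial point that $\C^{\alpha+q\log}(\mathbb{T})$ embeds continuously in $\C^0$ and is stable under the operations used in the normalization (in particular that $\log$ of the normalized weight stays in a space of the right regularity), which follows from the elementary properties of the modulus $\omega_{\alpha+\beta\log}$ recorded in Lemma~\ref{lemm:holderlog}.
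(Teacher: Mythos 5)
Your plan mirrors the paper's proof: establish the weak-contraction property of the backward random walk of $T_{q\log}$, compute the decay rate of the iterated neutral inverse branch (Lemma~\ref{lemm:clogdecay} gives precisely the stretched-exponential estimate $c^n(r)\le e^{-(an+(\log\frac1r)^{q+1})^{1/(q+1)}}$ you anticipate), verify flatness of $A$ via Lemma~\ref{lemm:flat}, and then invoke Theorem~\ref{theo:nuem}, which packages the normalization, the coupling/optimal-transport contraction and the spectral-gap conclusion. One point to sharpen: the flatness criterion is not mere summability of $\omega_{\alpha+q\log}(c^n(r))$ but the scale-uniform bound $\sum_{n\ge1}\omega_{\alpha+q\log}(c^n(r))\lesssim r^\alpha$ (i.e.\ controlled by $\omega_\alpha(r)$, the modulus of the target space $\C^\alpha$), and the ``spectral gap vs.\ polynomial decay'' dichotomy in Theorem~\ref{theo:nuem} is then governed by the observable exponent being $\alpha>0$, not by how much room the flatness estimate has to spare.
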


For very neutral points (when the graph of $T$ is very tangent to the diagonal) one cannot merely adjust the regularity requirements without eliminating non-constant potentials; but one can instead ask for pointwise regularity at the neutral fixed point, as in the case $q\ge 1$ of Theorem \ref{theo:mainHol}. To deal with general neutral points, one can simply ask for the maximal possible regularity there.
\begin{theomain}[Density of spectral gap potentials]\label{theo:mainDense}
Let $T$ be a degree $k$ self-covering of the circle with a neutral fixed point $0$, uniformly expanding outside each neighborhood of $0$.
For any $\alpha\in (0,1]$, let $V$ be the linear space $V$ of $\C^\alpha$ potentials which are constant near the neutral point. Then for all $A\in V$, the transfer operator $\op{L}_{T,A}$ acting on $\C^{\alpha}(\mathbb{T})$ has a spectral gap.

For all $\gamma\in (0,\alpha)$, $V$ is dense in $\C^\alpha(\mathbb{T})$ for the $\gamma$-H\"older norm.
\end{theomain}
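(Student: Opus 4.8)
The plan is to prove the two assertions separately: the spectral gap by fitting $(T,A)$ into the general flatness theorem of Section~\ref{ssec:flat}, and the density by an elementary cut-off argument. For the spectral gap, I would first record that the maps $T$ in the statement satisfy the structural hypotheses of the Ruelle--Perron--Frobenius theorem: $T$ is a finite-degree self-covering of $\mathbb{T}$, uniformly expanding off every neighborhood of its unique neutral point $0$, which is fixed. The one genuinely new point is that every $A\in V$ is \emph{flat} with respect to $T$: on the neighborhood where $A$ is constant, the local oscillation of $A$ along the inverse branch fixing $0$ (the quantity weighed in the flatness condition) vanishes identically, while off that neighborhood uniform expansion makes flatness automatic for any H\"older potential, exactly as in the uniformly expanding case of \cite{KLS}. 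Granting flatness, the general theorem yields a Ruelle--Perron--Frobenius theorem for $\op{L}_{T,A}$ on $\C^\alpha(\mathbb{T})$ with a decay rate for the normalized operator equal to that of the constant potential, so it only remains to see that the constant potential gives an \emph{exponential} rate. For $A\equiv 0$ the relevant Markov chain picks an inverse branch of $T$ uniformly at random; exactly one inverse branch preserves a small neighborhood of $0$ --- the neutral one, which is a weak contraction there --- and the remaining $k-1$ inverse branches map that neighborhood into the uniformly expanding region, hence are uniform contractions. Coupling two copies of the chain started at nearby points by feeding them the \emph{same} random sequence of inverse branches, the distance contracts by a fixed factor $<1$ at every step not using the neutral branch; since the last $n$ steps are all neutral only with probability $k^{-n}$, a one-line summation over the time of last escape gives exponential contraction of the expected distance, that is, a spectral gap for $\op{L}_{T,0}$. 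Plugging this rate into the flatness theorem produces the spectral gap for every $A\in V$.

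The main obstacle is not in any of these estimates --- routine once the framework is in place --- but in matching ``constant near $0$'' to the precise quantitative form of the flatness condition, and in checking that the loss of regularity incurred in the Denker--Urba\'nski normalization step near the neutral point is harmless precisely because $A$ adds no distortion there. This is where the hypothesis ``constant near the neutral point'' is needed, rather than the weaker pointwise regularity that suffices when the tangency of $T$ at $0$ is controlled (as in Theorem~\ref{theo:mainHol} for $q\ge 1$); it is also what makes the conclusion uniform over \emph{arbitrarily} flat neutral points.

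For the density, fix $A\in\C^\alpha(\mathbb{T})$ and $\gamma\in(0,\alpha)$, pick a smooth cut-off $\chi_\varepsilon:\mathbb{T}\to[0,1]$ that is $0$ on $[0,\varepsilon]$, is $1$ outside $[0,2\varepsilon]$, and has derivative at most $C/\varepsilon$ in absolute value, and set $A_\varepsilon\eqdef A(0)+\chi_\varepsilon\cdot\big(A-A(0)\big)$. Then $A_\varepsilon\in\C^\alpha(\mathbb{T})$ (a smooth function times a $\C^\alpha$ one) and $A_\varepsilon\equiv A(0)$ on $[0,\varepsilon]$, so $A_\varepsilon\in V$. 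The difference $g_\varepsilon\eqdef A_\varepsilon-A=-(1-\chi_\varepsilon)\big(A-A(0)\big)$ is supported in $[0,2\varepsilon]$, where $\lvert A-A(0)\rvert\le [A]_\alpha(2\varepsilon)^\alpha$ with $[A]_\alpha$ the $\alpha$-H\"older seminorm, so $\lVert g_\varepsilon\rVert_\infty\to 0$; and the blow-up $[1-\chi_\varepsilon]_\alpha\le C\varepsilon^{-\alpha}$ of the cut-off is exactly compensated by $\lvert A-A(0)\rvert\le C\varepsilon^\alpha$ on its support, so $[g_\varepsilon]_\alpha$ stays bounded as $\varepsilon\to 0$. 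The interpolation inequality $[g_\varepsilon]_\gamma\le C\,\lVert g_\varepsilon\rVert_\infty^{\,1-\gamma/\alpha}\,[g_\varepsilon]_\alpha^{\,\gamma/\alpha}$ then gives $[g_\varepsilon]_\gamma\to 0$, hence $\lVert A_\varepsilon-A\rVert_{\C^\gamma}\to 0$, which is the claimed density. No obstacle arises here.
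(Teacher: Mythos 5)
Your proposal follows the paper's own strategy: deduce flatness of potentials constant near the neutral point, combine this with exponential decay of the unweighted natural coupling, and conclude via the general machinery of Theorem~\ref{theo:nuem}; the density claim is handled by a cut-off. Two steps, however, are imprecise as written and deserve attention.

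First, the ``one-line summation over the time of last escape'' does not give the claimed bound. If you condition on the last non-neutral step being at time $t-n$ (probability $k^{-n}(k-1)/k$), you only learn that the distance contracted by $1/\lambda$ at least \emph{once}; summing over $n$ then produces something of the order $d(x,y)/\lambda$, not an exponentially small quantity. The correct computation is the binomial expectation: with $N_t$ the number of contracting steps in a synchronous coupling over $t$ steps,
\[
\mathbb{E}\big[\lambda^{-N_t}\big]=\Big(\tfrac{1}{k}+\tfrac{k-1}{k\lambda}\Big)^t ,
\]
which is exponentially small. This is exactly what Lemma~\ref{lemm:natural} proves (the paper's hypothesis is even weaker: one strongly contracting branch among $k$ suffices). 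You should either perform this calculation explicitly or cite it.

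Second, your flatness argument asserts that on $N$ the oscillation vanishes and off $N$ ``uniform expansion makes flatness automatic.'' This overlooks the interplay: a paired orbit $(\bar x,\bar y)$ can enter and leave $N$ repeatedly, and the flatness condition involves the Birkhoff sum over the \emph{entire} trajectory. The correct reduction is the run decomposition of Lemma~\ref{lemm:flatbis}: the time interval is split into maximal intervals during which both coordinates stay in $N$ (where $A$ contributes nothing), separated by at least one strongly contracting step; the contributions of successive runs are then summable by the contraction. This is not a consequence of the uniformly expanding analysis of~\cite{KLS}, which has no such decomposition; it uses the specific structure of the neutral set $N$. (There is also no ``loss of regularity'' in the Denker--Urbański normalization to worry about: Proposition~\ref{prop:eigenf2} gives $h\in\C^\omega$ and Corollary~\ref{coro:normalization} shows the normalized potential is again flat.)

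The density argument is correct. The paper uses a piecewise-affine cut-off and estimates $\lVert A-B\rVert_\gamma\lesssim\varepsilon^{\alpha-\gamma}$ directly; you use a smooth cut-off together with the interpolation inequality $[g]_\gamma\le C\lVert g\rVert_\infty^{1-\gamma/\alpha}[g]_\alpha^{\gamma/\alpha}$. The latter is arguably cleaner but the difference is cosmetic.
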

Note that the set of potentials the transfer operator of whose has a spectral gap is well-known to be open; in particular, this set is thus both open in the $\C^\alpha$ topology and dense in the uniform topology.
We do not need to impose any particular behavior at the neutral fixed point, which can be arbitrarily flat. We can for example apply this to examples of Holland \cite{holland2005slowly} (but not to the natural potential yileding the Acim, as Remark \ref{rema:acim} applies again).

Note that from the conclusions of each of Theorem \ref{theo:mainHol} or \ref{theo:mainless-tangent} we have, as in Theorem \ref{theo:mainDense}, that the set $\SG(T,\C^\alpha)$ of potentials whose transfer operator has a spectral gap contains an open set containing a linear subspace which is dense in the uniform norm.

As a consequence of Theorems \ref{theo:mainHol}-\ref{theo:mainDense} we can apply \cite{GKLM} under each of their sets of assumptions to easily obtain various results of classical flavor, e.g. providing formulas for successive derivatives of $\int \varphi \dd\mu_A$ with respect to $A$ and expressions for the modulus of convexity of the pressure function; and one can also argue as in \cite{KLS}  to show that the maximum entropy measure (or other RPF measures) depends on the map $T$ in a locally Lipschitz way, with respect to a Wasserstein metric $W_\alpha$ (see Corollary 1.5 of \cite{KLS}; the differentiability assumption is unnecessary in the context of e.g. Theorem \ref{theo:mainHol}, as one can see in the proof). We refer to these previous articles for these applications, but mention the following result as it needs some adaptation to an argument of \cite{GKLM}.
\begin{coromain}\label{coro:mainES}
In the context of Theorem \ref{theo:mainHol}, \ref{theo:mainless-tangent} or \ref{theo:mainDense}, for all $A\in V$ the RPF measure $\mu_A$  is the unique equilibrium state of the potential $A$, i.e. it uniquely maximizes
\[ h(\mu) + \int A \dd\mu\]
among $T$-invariant measures $\mu$, where $h$ is the Kolmogorov-Sinai entropy.
\end{coromain}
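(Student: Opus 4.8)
The plan is to pass to the Denker--Urbanski normalization and then identify $\log\lambda_A$ (the logarithm of the leading eigenvalue furnished by the Ruelle--Perron--Frobenius theorem) with the variational pressure, recognizing $\mu_A$ as its unique maximizer. Write $h_A\in\C^\alpha(\mathbb{T})$ for the positive eigenfunction and $\nu_A$ for the dual eigenmeasure, normalized by $\nu_A(h_A)=1$ so that $\mu_A=h_A\nu_A$, and set $\bar A = A + \log h_A - \log h_A\circ T - \log\lambda_A$. From the identity $\op{L}_{T,\bar A}\varphi=\tfrac1{\lambda_A h_A}\op{L}_{T,A}(h_A\varphi)$ one checks that $\op{L}_{T,\bar A}$ still acts on $\C^\alpha(\mathbb{T})$ with a spectral gap, that $\op{L}_{T,\bar A}\one=\one$, and that $\op{L}_{T,\bar A}^*\mu_A=\mu_A$; in particular $\mu_A$ is $T$-invariant. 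Since $\log h_A-\log h_A\circ T$ is a coboundary, $h(\mu)+\int A\dd\mu = \log\lambda_A + \big(h(\mu)+\int\bar A\dd\mu\big)$ for every $T$-invariant probability measure $\mu$, all entropies being finite because $h(\mu)\le h_{\mathrm{top}}(T)<\infty$. It therefore suffices to prove that $h(\mu)+\int\bar A\dd\mu\le0$ for every $T$-invariant $\mu$, with equality exactly when $\mu=\mu_A$.

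Assume first $\mu$ ergodic. The finitely many injectivity domains of $T$ form a one-sided generating partition in each of the settings of Theorems \ref{theo:mainHol}--\ref{theo:mainDense}, so Rokhlin's entropy formula gives $h(\mu)=\int\log J_\mu\dd\mu$, where $J_\mu$ is the Jacobian of $T$ relative to $\mu$; $T$-invariance of $\mu$ is moreover equivalent to having $\sum_{Ty=x}J_\mu(y)^{-1}=1$ for $\mu$-a.e.\ $x$, together with the change of variables $\int g\dd\mu = \int\big(\sum_{Ty=x}J_\mu(y)^{-1}g(y)\big)\dd\mu(x)$. Applying Jensen's inequality to the strictly convex function $-\log$ against the fibrewise probability weights $\{J_\mu(y)^{-1}\}_{Ty=x}$, and using $\sum_{Ty=x}e^{\bar A(y)}=1$ (which is the identity $\op{L}_{T,\bar A}\one=\one$), yields
\[\sum_{Ty=x}J_\mu(y)^{-1}\big(\bar A(y)+\log J_\mu(y)\big)\ \le\ \log\sum_{Ty=x}e^{\bar A(y)}\ =\ 0\]
for $\mu$-a.e.\ $x$; integrating in $x$ against $\mu$ through the change of variables gives $\int(\bar A+\log J_\mu)\dd\mu\le0$, that is, $h(\mu)+\int\bar A\dd\mu\le0$. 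For $\mu=\mu_A$ the relation $\op{L}_{T,\bar A}^*\mu_A=\mu_A$ says exactly that $J_{\mu_A}=e^{-\bar A}$ $\mu_A$-a.e., so equality holds; thus $\mu_A$ is an equilibrium state and $\log\lambda_A$ is the pressure. The non-ergodic case follows from the ergodic one by the ergodic decomposition and the affinity of $\mu\mapsto h(\mu)+\int\bar A\dd\mu$.

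It remains to see that equality forces $\mu=\mu_A$. For $\mu$ ergodic, equality makes Jensen's inequality an equality $\mu$-a.e., and since both $\sum_{Ty=x}e^{\bar A(y)}$ and $\sum_{Ty=x}J_\mu(y)^{-1}$ equal $1$ this forces $J_\mu=e^{-\bar A}$ $\mu$-a.e., i.e.\ $\op{L}_{T,\bar A}^*\mu=\mu$. The spectral gap gives $\op{L}_{T,\bar A}^n\varphi\to\int\varphi\dd\mu_A$ uniformly for $\varphi\in\C^\alpha(\mathbb{T})$, so $\int\varphi\dd\mu=\int\op{L}_{T,\bar A}^n\varphi\dd\mu\to\int\varphi\dd\mu_A$, whence $\mu=\mu_A$ by density of $\C^\alpha(\mathbb{T})$ in $C^0(\mathbb{T})$; the non-ergodic case again reduces to this by affinity. (Alternatively one may invoke \cite{GKLM}: on the open set $\SG(T,\C^\alpha)$ the convex function $B\mapsto\log\lambda_B$ is differentiable at $A$ with differential $\varphi\mapsto\int\varphi\dd\mu_A$, while any equilibrium state $m$ of $A$ furnishes a subgradient $\varphi\mapsto\int\varphi\dd m$ at $A$, forcing $\int\varphi\dd m=\int\varphi\dd\mu_A$ for all $\varphi\in\C^\alpha(\mathbb{T})$.) The main work is the identification in the second paragraph: lacking expansiveness one cannot invoke upper semicontinuity of the entropy, and one must instead justify Rokhlin's formula and the fibrewise Jensen estimate --- chiefly that the injectivity-domain partition is generating and that $\log J_\mu$ is $\mu$-integrable --- in each of the geometric settings of Theorems \ref{theo:mainHol}--\ref{theo:mainDense}.
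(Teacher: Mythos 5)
Your proof is correct in substance but takes a genuinely different route from the paper. The paper appeals to the variational characterization of entropy $h(\mu) = \inf_\varphi \{P(\varphi) - \int\varphi\,\dd\mu\}$ (Walters Theorem 9.12, valid here because $T$ is expansive), identifies $P(B)=\log\lambda_B$ on the open set of spectral-gap potentials by a direct separated/spanning-set computation for normalized potentials, uses the uniform-norm density of $V$ and the continuity of pressure to restrict the infimum to $B\in V$, and then pins down the unique maximizer via the derivative formula for $\log\lambda_B$ from \cite{GKLM} together with convexity. You instead run the classical ``Gibbs equals equilibrium'' argument: normalize $A$, apply Rokhlin's formula $h(\mu)=\int\log J_\mu\,\dd\mu$, and use fibrewise Jensen against the weights $J_\mu(y)^{-1}$ to get $h(\mu)+\int\bar A\,\dd\mu\le 0$ with equality characterized by $\op{L}_{\bar A}^*\mu=\mu$, hence $\mu=\mu_A$ by the spectral gap's uniqueness of the fixed probability. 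What your route buys is a self-contained, measure-theoretic argument that avoids the density-of-$V$ device and the delicate convexity/derivative step; what it costs is the need to justify Rokhlin's formula (generating partition, integrability of $\log J_\mu$) in each geometric setting, which you correctly flag, and which in these degree-$k$ circle-covering settings is standard precisely because the maps are expansive. On that last point your closing remark (``lacking expansiveness one cannot invoke upper semicontinuity of the entropy'') is slightly misplaced: the maps of Theorems \ref{theo:mainHol}--\ref{theo:mainDense} are expansive, and the paper's proof uses exactly this to get upper semicontinuity of $h$. Your parenthetical alternative (differentiability of $B\mapsto\log\lambda_B$ plus the subgradient from an equilibrium state) is closest in spirit to the paper, but as stated it omits the preliminary identification $P(B)=\log\lambda_B$ on the spectral-gap neighborhood, which the paper establishes separately.
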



\subsection{Results with polynomial decay}

Our method applies readily to uniformly expanding maps and lower-regularity potentials, for example yielding the following.
\begin{theomain}[Polynomial decay in low regularity]\label{theo:mainUniform}
Let $T:\Omega\to\Omega$ be a $k$-to-$1$ uniformly expanding map of a compact metric space, and let $\beta\in(1,+\infty)$. For any potential $A\in\C^{\beta\log}(\Omega)$, the transfer operator $\op{L}_{T,A}$ acting on $\C^{(\beta-1)\log}(\Omega)$ satisfies a Ruelle-Perron-Frobenius theorem and the RPF measure has at least polynomial decay of correlations of degree $\beta-1$.
\end{theomain}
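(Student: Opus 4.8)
The plan is to run, in this uniformly expanding setting (where the ``flatness'' hypothesis of the general framework is automatic), the programme outlined in the introduction: normalize the potential following Denker--Urbanski, read off the normalized transfer operator as a Markov transition kernel, and control that kernel by comparison with the unweighted one. The crucial preliminary is a uniform \emph{bounded distortion} estimate. If $\lambda>1$ is the expansion constant and $y,y'$ lie in a common $n$-cylinder, then $|S_nA(y)-S_nA(y')|$ is, up to the $\C^{\beta\log}$-seminorm of $A$, at most $\sum_{m=1}^{n}\omega_{\beta\log}(\lambda^{-m}\diam)$, and the decisive arithmetic is
\[\sum_{m\ge1}\omega_{\beta\log}(\lambda^{-m}r)=\sum_{m\ge1}\frac{1}{\big(m\log\lambda+\log(r_0/r)\big)^{\beta}}\;\asymp\;\frac{1}{\big(\log(r_0/r)\big)^{\beta-1}}=\omega_{(\beta-1)\log}(r),\]
which converges exactly because $\beta>1$, and whose partial sums decay like $\omega_{(\beta-1)\log}$. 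Feeding this distortion control into the Denker--Urbanski construction yields the full Ruelle--Perron--Frobenius data --- an eigenvalue $\rho>0$, a probability eigenmeasure $\nu$ with $\op{L}_{T,A}^*\nu=\rho\nu$, and an eigenfunction $h$ (obtained e.g.\ as $\lim_n\rho^{-n}\op{L}_{T,A}^n\one$), bounded away from $0$ and $\infty$; applying the displayed bound to $\rho^{-n}\op{L}_{T,A}^n\one$ also shows $h\in\C^{(\beta-1)\log}(\Omega)$. This is precisely why one gains an RPF theorem on $\C^{(\beta-1)\log}$ rather than on $\C^{\beta\log}$: the eigenfunction, hence the normalized potential, is only $\C^{(\beta-1)\log}$.

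Next I would normalize, setting $\bar A:=A+\log h-\log h\circ T-\log\rho\in\C^{(\beta-1)\log}(\Omega)$, so that $\op{L}_{T,\bar A}\one=\one$ and $\rho^{-n}\op{L}_{T,A}^n\varphi=h\cdot\op{L}_{T,\bar A}^n(\varphi/h)$; everything then reduces to the decay of the Markov operator $\op{L}_{T,\bar A}$. Writing $\mchain{M}=\op{L}_{T,\bar A}^*$ for its action on measures --- the transition kernel of the chain that moves from $x$ to $y\in T^{-1}(x)$ with probability $e^{\bar A(y)}$ --- one has $\mchain{M}\mu=\mu$ for the RPF measure $\mu=h\nu$, and the target estimate is
\[W_{\omega_{(\beta-1)\log}}\big(\mchain{M}^n\delta_x,\mu\big)=O\big(n^{-(\beta-1)}\big)\qquad\text{uniformly in }x,\]
where $W_{\omega_{(\beta-1)\log}}$ is the transportation metric with cost $\omega_{(\beta-1)\log}(d(\cdot,\cdot))$, a genuine modulus of continuity (cf.\ Lemma~\ref{lemm:holderlog}). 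By Kantorovich duality this is equivalent to $\|\op{L}_{T,\bar A}^n\varphi-\mu(\varphi)\|_\infty\le C\,n^{-(\beta-1)}\|\varphi\|_{\C^{(\beta-1)\log}}$, which is the asserted polynomial decay of correlations of degree $\beta-1$ against any $L^1(\mu)$ observable; uniqueness of $\mu$ drops out as well.

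The hard part is the displayed Wasserstein bound, and it is where the optimal-transportation device of \cite{KLS} is indispensable. For the \emph{unweighted} kernel $\mchain{M}_0$ (uniform choice among the $k$ inverse branches) it is immediate: $T^{-n}(x)$ and $T^{-n}(x')$ are matched branch by branch, paired points lying within $\lambda^{-n}d(x,x')$ and carrying equal $\mchain{M}_0$-mass, so $W_{\omega_{(\beta-1)\log}}(\mchain{M}_0^n\delta_x,\mchain{M}_0^n\delta_{x'})\le\omega_{(\beta-1)\log}(\lambda^{-n}\diam)$, and averaging over $x'$ against $\mu_0$ gives the same bound for $W_{\omega_{(\beta-1)\log}}(\mchain{M}_0^n\delta_x,\mu_0)$, which is $\asymp n^{-(\beta-1)}$ --- already of the right order, the rate being $\omega_{(\beta-1)\log}$ of an exponentially small length. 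The task is to carry this over to the weighted kernel $\mchain{M}$, whose branch weights $e^{\bar A(\cdot)}$ differ from $1/k$ but, by bounded distortion, oscillate by only $O(\omega_{(\beta-1)\log}(\lambda^{-m}\diam))=O(m^{-(\beta-1)})$ over an $m$-cylinder. A naive step-by-step Markovian coupling will not do, since these per-step discrepancies, of size $\asymp m^{-(\beta-1)}$, are not summable when $\beta\le2$; instead one must combine couplings \emph{implicitly} through optimal transport. Decomposing $\mchain{M}\rho=\sum_i g_{i*}\big((e^{\bar A}\circ g_i)\,\rho\big)$ as a sum of measures carried by the $k$ branch images, one transports mass within each branch image along the $\lambda^{-1}$-contracting inverse branches $g_i$ and pays only for the mismatch between the branch masses $\int(e^{\bar A}\circ g_i)\,d\rho_1$ and $\int(e^{\bar A}\circ g_i)\,d\rho_2$, estimating that mismatch by the already-controlled proximity of $\rho_1,\rho_2$ together with the distortion bound on $e^{\bar A}$. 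Iterating this one-step inequality lets the transported distances contract geometrically while keeping the accumulated errors within $O(\omega_{(\beta-1)\log}(\lambda^{-n}\diam))$ --- the general ``weighted kernel converges as fast as the unweighted one'' principle, specialized to the present exponential unweighted rate and $\C^{\beta\log}$ potential. The RPF data from the first step then finishes the proof. I expect the genuine technical work to lie exactly in arranging the optimal-transport comparison so that the per-step errors are really absorbed rather than merely summed.
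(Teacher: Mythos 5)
Your outline matches the paper's strategy at the level of ingredients: the arithmetic $\sum_{m\ge1}\omega_{\beta\log}(\lambda^{-m}r)\asymp\omega_{(\beta-1)\log}(r)$ is precisely the verification of the paper's flatness hypothesis (Lemma~\ref{lemm:flat} with $\tilde\omega=\omega_{\beta\log}$, $\omega=\omega_{(\beta-1)\log}$, $c(r)=r/\lambda$); the existence of a positive eigenfunction $h\in\C^{(\beta-1)\log}$ via Denker--Urbanski and the resulting normalization $\bar A\in\C^{(\beta-1)\log}$ are exactly Propositions~\ref{prop:eigenf}--\ref{prop:eigenf2} and Corollary~\ref{coro:normalization}; and the observation that the unweighted kernel already has the right $\wass_{\omega_{(\beta-1)\log}}$ decay is Lemma~\ref{lemm:natural}. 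So you have correctly identified the route.

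But there is a genuine gap exactly where you flag one, and the mechanism you sketch for closing it would not work. You propose to transfer the unweighted-to-weighted bound via a \emph{one-step} decomposition $\mchain{M}\rho=\sum_i g_{i*}\big((e^{\bar A}\circ g_i)\rho\big)$ and then ``iterate this one-step inequality.'' As you yourself compute, the per-step mass mismatch at step $m$ is of size $\asymp m^{-(\beta-1)}$, so step-by-step accumulation gives $\sum_{m\le n}m^{-(\beta-1)}$, which diverges for $\beta\le2$. Saying the errors must be ``really absorbed rather than merely summed'' names the problem without solving it. The paper's Theorem~\ref{theo:core} solves it by \emph{not} iterating a one-step estimate: it works directly with the time-$t$ trajectory coupling $\Pi^t_{x,y}$, sets $a(\bar x,\bar y)=\min\big(e^{\bar A^t(\bar x)},e^{\bar A^t(\bar y)}\big)$, and splits the weighted transport plan as $a\,\dd\Pi^t_{x,y}+\Lambda$. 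Flatness is a bound on the \emph{full Birkhoff sums} $|\bar A^t(\bar x)-\bar A^t(\bar y)|\le C\omega\circ d(x,y)$ uniformly in $t$, and it gives the one-shot estimate $\int\one\,\dd\Lambda\le\min\big(B\omega\circ d(x,y),1-e^{-B}\big)$ --- the mass defect of the good piece is controlled by a single factor $1-e^{-B\omega\circ d(x,y)}$, with no dependence on $t$, and hence nothing to accumulate. Combined with the introduction of the truncated modulus $\omega'=\min(K\omega,\omega(\diam\Omega))$ and the decay-time bookkeeping of Section~\ref{ssec:rates}, this yields the contraction. Without this $t$-global use of flatness (or an equivalent device), your sketch leaves precisely the hard implication unproved, so the argument as written does not establish the theorem.
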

This can be generalized to other pairs of modulus of continuity (instead of $\omega_{\beta\log}$ for the potential and $\omega_{(\beta-1)\log}$ for the observables in the statement above).

Theorem \ref{theo:mainUniform} enables us to recover and slightly strengthen a result of Fan and Jiang \cite{FJ1,FJ2}.
\begin{coromain}[Acim for maps with mildly regular derivative]\label{coro:mainAcim}
Assume $\Omega$ is a connected manifold and $T:\Omega\to\Omega$ is continuously differentiable with $JT := \lvert \det DT\rvert:\Omega\to\mathbb{R}$ in $\C^{\beta\log}(\Omega)$ for some $\beta>1$.

Then $T$ has an absolutely continuous invariant measure in $\C^{(\beta-1)\log}(\Omega)$, with at least polynomial decay of correlations of degree $\beta-1$.
\end{coromain}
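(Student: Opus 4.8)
The plan is to apply Theorem~\ref{theo:mainUniform} to the geometric potential $A\eqdef -\log JT$ and then to recognize the resulting RPF measure $\mu_A$ as the desired absolutely continuous invariant measure.

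First I would check the hypotheses of Theorem~\ref{theo:mainUniform}. Being a $C^1$ uniformly expanding self-map of the compact connected manifold $\Omega$, $T$ is a local diffeomorphism, hence an open proper map; its image is therefore open and closed, so $T$ is onto, and being a covering map of a connected base it is $k$-to-$1$ for $k=\deg T$. As for the potential, $JT=\lvert\det DT\rvert$ is continuous and nowhere zero (each $DT_x$ is invertible), so by compactness $0<\inf_\Omega JT\le\sup_\Omega JT<\infty$; since $-\log$ is Lipschitz on $[\inf_\Omega JT,\sup_\Omega JT]$, the function $A=-\log JT$ has modulus of continuity bounded by a constant multiple of that of $JT$, so $A\in\C^{\beta\log}(\Omega)$. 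Theorem~\ref{theo:mainUniform} then provides a Ruelle-Perron-Frobenius theorem for $\op{L}_{T,A}$ acting on $\C^{(\beta-1)\log}(\Omega)$: a leading eigenvalue $\lambda_A$, a positive eigenfunction $h_A\in\C^{(\beta-1)\log}(\Omega)$, an eigenmeasure $\nu_A$, the RPF measure $\mu_A=h_A\,\nu_A$ (suitably normalized), and decay of correlations for $\mu_A$ against $\C^{(\beta-1)\log}$ observables that is at least polynomial of degree $\beta-1$. Note that $\beta-1>0$, so every function of $\C^{(\beta-1)\log}(\Omega)$ is uniformly continuous, in particular bounded.

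Then I would identify $\mu_A$ as an Acim. For $A=-\log JT$ the transfer operator is $\op{L}_{T,A}f(x)=\sum_{Ty=x}f(y)/JT(y)$, i.e. the classical Perron-Frobenius operator of $T$ relative to a reference smooth volume $\vol$ on $\Omega$: decomposing the integral by a partition of unity subordinate to a cover of $\Omega$ by sets on which $T$ is a diffeomorphism onto its image, the change-of-variables formula gives $\int(g\circ T)\,f\dd\vol=\int g\,(\op{L}_{T,A}f)\dd\vol$ for all continuous $f,g$; in particular $\int\op{L}_{T,A}f\dd\vol=\int f\dd\vol$, i.e. $\op{L}_{T,A}^*\vol=\vol$. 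Applying this to the eigenfunction relation $\op{L}_{T,A}h_A=\lambda_A h_A$ yields $\int h_A\dd\vol=\int\op{L}_{T,A}h_A\dd\vol=\lambda_A\int h_A\dd\vol$; since $\int h_A\dd\vol>0$ (as $h_A>0$), this forces $\lambda_A=1$, and then the uniqueness of the positive eigenmeasure asserted by the RPF theorem gives $\nu_A=\vol$ once $\vol$ is normalized to a probability measure. Hence $\mu_A=\big(\int h_A\dd\vol\big)^{-1}h_A\,\vol$ is a $T$-invariant probability measure, absolutely continuous with density $h_A\in\C^{(\beta-1)\log}(\Omega)$, and it has the polynomial decay of correlations of degree $\beta-1$ furnished by Theorem~\ref{theo:mainUniform}, which proves the corollary. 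The argument is essentially a dictionary translation; there is no real obstacle, and the only points requiring a little care are the passage from the $\C^{\beta\log}$-regularity of $JT$ to that of $\log JT$ (immediate, once one notes that $JT$ is bounded away from $0$ and $\infty$) and the identification of the RPF eigendata, where the one thing to verify is $\lambda_A=1$, forced by the identity $\lambda_A\int h_A\dd\vol=\int h_A\dd\vol$ above.
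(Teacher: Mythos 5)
Your proof is correct and takes essentially the same route as the paper: apply Theorem~\ref{theo:mainUniform} to the geometric potential $A=-\log JT$, observe via change of variables that $\op{L}_{T,A}^*$ fixes the normalized volume, and conclude that $h_A\,\dd\vol$ is the Acim. You supply more detail than the paper's terse argument — the verification that $A\in\C^{\beta\log}(\Omega)$, the covering-map reason why $T$ is $k$-to-$1$, and the explicit derivation of $\lambda_A=1$ from $\op{L}_{T,A}^*\vol=\vol$ and the positivity of $h_A$ — but these are exactly the details the paper leaves implicit, not a different approach.
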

Note that compared to Corollary 1 in \cite{FJ2} we gain a logarithmic factor in the decay of correlations.
In fact (as in all our other results) we obtain more than decay of correlations: if $\op{L}=\op{L}_{T,-\log JT}$ is the transfer operator of the natural potential (whose RPF measure is the Acim of $T$), for all $f\in\C^{(\beta-1)\log}(\Omega)$ with zero average we get
\[\lVert \op{L}^t f \rVert_\infty \le \frac{C(f)}{t^{\beta-1}}\] 
In particular, if $\beta>3/2$, we have a polynomial decay of degree above $\frac12$ \emph{in the uniform norm}, and the Acim will satisfy a (Functional) Central Limit Theorem, see \cite{T-K05}.

We can also generalize Theorem \ref{theo:mainHol} as follows.
\begin{theomain}[Polynomial decay in the Pommeau-Manneville family]\label{theo:mainpol}
If $q<1$, for any $\beta>1$ and any $A\in\C^{q+\beta\log}(\mathbb{T})$, the transfer operator $\op{L}_{T_q,A}$ acting on $\C^{(\beta-1)\log}(\mathbb{T})$ satisfies a Ruelle-Perron-Frobenius theorem, and the RPF measure has a polynomial decay of correlations of degree $\beta-1$.
\end{theomain}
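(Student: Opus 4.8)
The plan is to run the general machinery of the paper exactly as for Theorem~\ref{theo:mainHol}, but in the logarithmically refined scale and tracking a polynomial rate instead of an exponential one. The key point is that the logarithmic modulus $\omega_{(\beta-1)\log}$ is precisely the device that turns the exponential Wasserstein contraction available away from the neutral point into polynomial decay of degree $\beta-1$, exactly as in the uniformly expanding Theorem~\ref{theo:mainUniform}; the only extra ingredient needed, as compared with that theorem, is the ``flatness'' analysis at the neutral fixed point that already powers Theorem~\ref{theo:mainHol}.

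First I would normalize the potential following Denker--Urbanski: one checks that every $A\in\C^{q+\beta\log}(\mathbb{T})$ is flat for $T_q$, so that a conformal measure $\nu$ exists, a leading eigenfunction $h\in\C^{(\beta-1)\log}(\mathbb{T})$ is produced, and the renormalized dual operator $\op{L}_{T_q,A}^*/\lambda$ is a genuine Markov transition kernel $\op{Q}$ on $\mathbb{T}$, whose step from $x$ picks an inverse branch $y\in T_q^{-1}(x)$ with probability proportional to $e^{A(y)}h(y)$. Then I would invoke the abstract transportation estimate of the paper, by which the bound $\lVert\op{L}_{T_q,A}^t f\rVert_\infty\le C(f)\,t^{-(\beta-1)}$ for zero-average $f\in\C^{(\beta-1)\log}$ reduces to two inputs: (i) the unweighted inverse-branch chain of $T_q$ contracts Euclidean distance in Wasserstein at an exponential rate; and (ii) $A$ is flat, so that the weight ratios $e^{A(\psi y)-A(\psi y')}$ stay summably close to $1$ along backward orbits. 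Input (i) is comparatively soft for $q<1$: in an optimal coupling of two point masses one matches the near-$0$ preimages with each other and the preimages near $\tfrac12$ with each other, and since the latter branch contracts Euclidean distance by the definite factor $\tfrac12$ and is selected with probability bounded below, $\wass(\op{Q}^{*t}\delta_x,\op{Q}^{*t}\delta_y)\lesssim\rho^t$ for some $\rho<1$; composing with the concave $\omega_{(\beta-1)\log}$ through Jensen's inequality then gives the claimed $t^{-(\beta-1)}$ decay in the observable metric, whence the Ruelle--Perron--Frobenius theorem (the eigendata appearing as the limit of the chain) and the stated polynomial decay of correlations for $\mu_A=h\nu$.

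The real work, and the step I expect to be the main obstacle, is (ii): verifying flatness of $A\in\C^{q+\beta\log}$ near the neutral point with all constants uniform in $x,y$, so that the \emph{weighted} chain inherits the contraction of the unweighted one even along long excursions into the neutral region. Writing $T_q(x)=x+2^q x^{q+1}+\dots$ on $[0,\tfrac12]$, the contracting branch $\psi$ obeys $\psi^n(x)\asymp(cn)^{-1/q}$, and for two nearby points entering the neutral region at time $n_0\asymp\lvert x-y\rvert^{-q}$ the distances $\lvert\psi^n x-\psi^n y\rvert$ shrink along the excursion so that the oscillation sum $\sum_{n\ge n_0}\omega_{q+\beta\log}(\lvert\psi^n x-\psi^n y\rvert)$ converges; the key computation is that a sum whose $n$-th term behaves like $n^{-1}(\log n)^{-\beta}$, started at $n_0$, is comparable to $(\log(r_0/\lvert x-y\rvert))^{-(\beta-1)}=\omega_{(\beta-1)\log}(\lvert x-y\rvert)$. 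This is exactly the mechanism by which the degree drops from ``$\beta$'' to $\beta-1$ and the reason $\C^{q+\beta\log}$ is the correct class --- the exponent $q$ matching the order of tangency of $T_q$ to the diagonal, the logarithmic defect matching the observable space --- and it is the same estimate that produces the threshold $\gamma-\alpha\ge q$ in Theorem~\ref{theo:mainHol}, carried out one notch more finely. A secondary technical point is to confirm that for $q<1$ the unweighted chain's excursions never beat the rate $\beta-1$ dictated by the function space, so that the map contributes nothing worse than this bottleneck --- in contrast with the absolutely continuous invariant measure of Remark~\ref{rema:acim}, whose unbounded density at $0$ puts it outside the reach of the method altogether.
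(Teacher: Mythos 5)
Your proposal is correct and follows essentially the same path as the paper: establish that the uniform backward random walk of $T_q$ is weakly contracting, verify that $A\in\C^{q+\beta\log}$ is flat with respect to $\omega_{(\beta-1)\log}$ via the estimate $\sum_n \omega_{q+\beta\log}(c^n(r))\lesssim (\log\frac1r)^{-(\beta-1)}$ (which uses $c^n(r)\lesssim(an+r^{-q})^{-1/q}$, i.e.\ Lemma~\ref{lemm:cdecay}), and then feed this into the normalization (Corollary~\ref{coro:normalization}), the core contraction result (Theorem~\ref{theo:core}), and the passage from $\wass_\omega$-decay to operator decay (Proposition~\ref{prop:WassDecay}), all of which is packaged in Theorem~\ref{theo:nuem}. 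Two small imprecisions: your ``input (i)'' bound $\wass(\op{Q}^{*t}\delta_x,\op{Q}^{*t}\delta_y)\lesssim\rho^t$ should be stated for the \emph{unweighted} backward walk (the $\omega$-decay used in Theorem~\ref{theo:core}, obtained from Lemma~\ref{lemm:natural}), not the normalized weighted kernel $\op{Q}$; and the flatness sum should be taken over all $n\ge1$, not just $n\ge n_0$ --- the head of the sum, roughly $n_0\cdot\omega_{q+\beta\log}(r)\asymp(\log\frac1r)^{-\beta}$, is dominated by the tail $(\log\frac1r)^{-(\beta-1)}$ but still needs to be accounted for.
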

We could also obtain with the same method a similar result for $q\ge1$.

Note that the $\C^{(\beta-1)\log}$ regularity is very mild, which is a strength of Theorems \ref{theo:mainUniform} and \ref{theo:mainpol} as they apply to many observables, much less regular than H\"older. Note that for example, in the case of a Pommeau-Manneville map $T_q$ with $q<1$, when $A$ is $\gamma$-H\"older for some $\gamma>q$ Theorem \ref{theo:mainHol} is not applicable to observables in $\C^{\beta\log}$. But then one can use Theorem \ref{theo:mainpol} since in particular $A\in \C^{q+(\beta+1)\log}$, and then one obtains polynomial decay of correlations for such very weakly regular observables.


\subsection{Short account of some previous works}

The works on decay of correlations for non-uniformly expanding maps are too numerous to all be cited; let us only mention a few of them in addition the the ones already discussed above. Manneville and Pomeau \cite{PM} introduced the $q=1$ case of the family $(T_q)_{q>0}$ as a model for intermittent phenomena observed in the Lorentz model; the thermodynamical formalism has been studied for such intermittent maps at least since the work of Prellberg and Slawny \cite{PS}. Liverani, Saussol and Vaienti \cite{LSV} obtained good estimates of the decay of correlation for the Acims of the Pomeau-Manneville family with a simple approach. Young introduced the now called ``Young towers'' \cite{Young98,Young99}, giving an upper bound on decay of correlation for a wealth of non-uniformly hyperbolic systems. Her method was for example used by Holland \cite{holland2005slowly} to study maps with strongly neutral point, proving in some cases sub-polynomial decay of correlations. Fine statistical properties have notably been established by Gou\"ezel \cite{Gouezel05} and Melbourne and Nicol \cite{MN}. Hu \cite{Hu04} and Sarig \cite{Sarig02} proved \emph{lower} bounds on decay of correlations for intermittent maps, refined by Gou\"ezel \cite{gouezel2004sharp}. 

All the above works deals with the Acim, and their results are therefore formally disjoint from our results \ref{theo:mainHol}-\ref{coro:mainES}, \ref{theo:mainpol}.
It is often argued that the Acim or, more generally, the physical measures, are the invariant measures that matter most since Lebesgue-negligible events seem too elusive to be ever witnessed. However, we would like to stress that Guih\'eneuf (Theorem 56 in \cite{guiheneuf}) showed that in some cases (for generic conservative homeomorphisms), \emph{all} invariant measures will in fact be ``observed'' in numerical simulations. The fact that RPF measures are in many cases equilibrium states for their potential is another reason to study them in general. 

Equilibrium states of intermittent maps have notably been studied by Hofbauer and Keller \cite{Hofbauer-Keller} and Bruin and Todd \cite{Bruin-Todd}, where they deal with potential that are sufficiently close to being constant. By contrast, our results need no such assumption but rely instead on regularity hypotheses. Note that, as will be seen below and is visible in Theorem \ref{theo:mainHol} when $q\ge 1$ and in Theorem \ref{theo:mainDense}, we in fact mostly need regularity of the potential near the neutral point; away from the neutral point, the potential can be merely H\"older (and observables are allowed to be arbitrary in the suitable regularity class, without need for a special behavior near the neutral point).

Previously to the aforementioned work by Li and Rivera-Letelier, uniqueness of equilibrium states and exponential decay of correlations where obtained in some cases by Liverani, Saussol and Vaienti \cite{LSV98}, using the Hilbert metric on cones. These authors do get a decay of the transfer operator, but in uniform norm with the BV norm of the observable as a factor -- this inhomogeneity makes their result intermediate between ``naked'' exponential decay of correlations and a spectral gap. More recently, Castro and Varandas \cite{CV} obtained interesting results for a large family of non-uniformly expanding map, but they need the potential to be very close to a constant in H\"older norm (in this direction, see also \cite{K:HT}). Compared to these works, the main features of the present approach thus are: to allow potentials with large variations; to provide spectral gaps in many cases; to be also applicable in situation where decay of correlations are likely not to be more than polynomial. We also expect the method to be flexible enough to be used in a wide array of examples; in any case, the method feels sufficiently different from the ones currently available (such as inducing) to be potentially useful beyond our main results.

Cyr and Sarig \cite{cyr-Sarig09} proved that the spectral gap property for the transfer operator is dense for countable Markov shifts, but in a sense that enables any rich enough Banach space. This differs quite a bit from Theorem \ref{theo:mainDense} where we get uniform density of the spectral gap property for transfer operators acting on the fixed, natural $\C^\alpha$ Banach space.

The H\"older moduli of continuity are quite ubiquitous in the literature, and we finish by mentioning the work of Lynch \cite{Lynch}, who used Young towers to study the decay of correlations for the Acim of (possibly non-uniformly) expanding maps and observables of various weak regularities, including $\C^{\beta\log}$. In the uniformly expanding case, for $\C^{\beta\log}$ observables he obtained polynomial decay of correlation but only with a loosely controlled degree, and Theorem \ref{theo:mainUniform} is much more precise in this case. Our method could be adjusted to work with other modulus of continuity as well.

\subsection{Structure of the article}

In order to both obtain clean and easily stated results and make our method easily applicable in other situations, this article is constructed in layers. Depending on the case one wants to apply our method to, one may use results stated in a high layer, or may have to start from one of the first layers and specialize it to one's precise case.

The ``zeroth'' layer, Section \ref{sec:preliminary}, sets up notation and definitions of general scope: optimal transportation, moduli of continuity and associated generalized H\"older spaces, general considerations on convergence speed to the fixed point for iterations of weakly contracting maps, couplings of a transition kernel, flatness of a potential. We also explain how Corollary \ref{coro:mainES} follows from Theorems \ref{theo:mainHol}-\ref{theo:mainDense} and \cite{GKLM}.

The first layer is centered on transition kernels: in Section \ref{sec:positive} we prove the Ruelle-Perron-Frobenius, and in Section \ref{sec:maincontraction} we prove our core contraction result, Theorem \ref{theo:core}.

The second layer, Section \ref{sec:chains}, specializes this contraction result to transition kernels that are backward random walk for $k$-to-$1$ maps. Several Lemmas aiming at proving flatness for various potential in various contexts are proved, and the first layer is condensed into Theorem \ref{theo:nuem} for $k$-to-$1$ maps and $\C^{\alpha+\beta\log}$ potentials. The previous Section should be useful in more generality, see Remark \ref{rema:nopartition}

The third and last layer, Section \ref{sec:proofmain}, relies on the previous one to finish the proofs of the results stated in this introduction.

\section{General setting and preliminary results}
\label{sec:preliminary}

In all the article $\Omega$ denotes a compact metric space, with distance function $d$, and $T:\Omega\to\Omega$ is a map. We denote by $\proba(\Omega)$ the set of probability measures on $\Omega$, endowed with the weak-* topology. Given $\mu\in\proba(\Omega)$ and a Borel-measurable $f:\Omega\to\mathbb{R}$, we denote the integral of $f$ with respect to $\mu$ either by $\int_{\Omega} f \dd\mu$ or $\mu(f)$.

If $x,y$ are quantities depending on some parameters, when writing $x\le C y$ we may dispense from introducing the constant $C$, which may change from paragraph to paragraph. We will only introduce $C$ more explicitly when we feel there is a risk of confusion, notably on the dependance of $C$ on some of the parameters. Sometimes we will indicate a change in the constant more explicitly, for example writing $x_2 \le C' y_2$. When we prefer to let the constant $C$ completely implicit, we write $x\lesssim y$.

We denote by $\one$ the constant function on $\Omega$ with value $1$.

\subsection{Maps, transfer operators and transition kernels}
\label{ssec:transferop}

 Following the ideas of thermodynamical formalism, to construct and study invariant measures one considers a Banach algebra $\fspace{X}$ (i.e. $\fspace{X}$ is stable by product and $\lVert fg\rVert\le \lVert f\rVert \lVert g\rVert$)  of ``potentials'' $A:\Omega\to\mathbb{R}$ and a Banach space $\fspace{Y}$ of ``observables'' $A:\Omega\to\mathbb{R}$ such that for any potential the following ``transfer operator'' is bounded on $\fspace{Y}$:
\begin{equation}
\op{L}_{T,A} f(x) = \sum_{T(y)=x} e^{A(y)} f(y) \qquad\forall f\in\fspace{Y}
\label{eq:transfer-classical}
\end{equation}
(we shall let either or both subscripts implicit when $T$ or, more rarely $A$, is clear from the context). The hypothesis that $\fspace{X}$ is an algebra is only meant to ensure that $e^A \in\fspace{X}$ so that we speak indifferently of the regularity of $A$ or $e^A$; and in most cases $\fspace{X}$ will be equal to or a subspace of $\fspace{Y}$. 

The above formula is suitable when $T$ is $k$-to-one, but poses problems otherwise. It could be ill-defined if the number of inverse images is infinite, or could map continuous functions to non-continuous ones if the number of inverse images is not locally constant. These issues can often be dealt with, but solution sometimes feel ad-hoc. It is thus natural to replace the sum with an integral with respect to a Markov transition kernel (all measurability  properties shall be considered with respect to the Borel algebra).
\begin{defi}
By a \emph{transition kernel} on $\Omega$ we mean a family $\mchain{M}=(m_x)_{x\in\Omega}$ of probability measures $m_x$ on $\Omega$ (we ask $x\mapsto m_x$ to be furthermore Borel measurable).

The transition kernel $\mchain{M}$ is said to be a \emph{backward walk} of the map $T$ if for all\footnote{This could be replaced by ``almost all $x$'' with respect to the RPF measure for most purposes, the problem being that at this point the RPF measure is not known.}  $x\in\Omega$, the measure $m_x$ is supported on $T^{-1}(x)$.
Given a potential $A\in\fspace{X}$, we define the \emph{transfer operator} of $\mchain{M}$ with respect to $A$ by
 \[\op{L}_{\mchain{M},A} f(x) = \int_\Omega e^{A(y)} f(y) \dd m_x(y)\]
(as above we may keep either or both subscripts implicit.)
A transition kernel is said to be \emph{compatible} with $\fspace{X}$ if for all $A\in\fspace{X}$, the above formula defines a continuous operator $\fspace{X}\to\fspace{X}$
\end{defi}

As soon as $\fspace{X}$ is large enough to separate probability measures, i.e. 
\[\big( \forall f\in\fspace{X}: \mu(f)=\nu(f) \big)\implies \mu=\nu,\]
one can define by duality the operator $\op{L}_{\mchain{M},A}^*$ acting on $\proba(\Omega)$:
\[ \int_\Omega f \dd \big(\op{L}_{\mchain{M},A}^* \mu\big) = \int_\Omega \op{L}_{\mchain{M},A} f \dd\mu \qquad \forall f\in\fspace{X}.\]
Equivalently, 
\[\dd\big(\op{L}_{\mchain{M},A}^* \mu \big) (x)= \int_\Omega (e^{A} \dd m_x) \dd\mu(x).\]

A classical computation shows that $\op{L}_A^t$ can be expressed as
\[\op{L}_A^t f(x) = \int e^{A^t(\bar x)} f(x_t) \dd m^t_x(\bar x)\]
where $A^t$ denote the Birkhoff sum:
\begin{align*}
A^t: \qquad\qquad\qquad \Omega^t &\to \mathbb{R} \\
  \bar x=(x_1,\dots,x_t) &\mapsto A(x_1)+\dots + A(x_t).
\end{align*}

\begin{exem}
The most classical example is when $T$ is a $k$-to-one local homeomorphism, $m_x$ is the uniform distribution over $T^{-1}(x)$, and $\fspace{X}$ is the algebra of continuous, or $\alpha$-Hölder functions. Then the transfer operator of $\mchain{M}$ is equal to the classical transfer operator of $T$ up to a constant. When $T$ is an unimodal map $[0,1]\to[0,1]$ with critical point $c$ and such $T$ maps each of $[0,c]$ and $[c,1]$ onto $[0,1]$, since $x=1$ has only one inverse image while the $x<1$ have two the formula \eqref{eq:transfer-classical} is somewhat inappropriate: the transfer operator would not preserve the space of continuous functions. Taking $m_x$ to be the uniform distribution over $T^{-1}(x)$ solves this inconvenience.

Another classical example is with $T$ a finite-to-one, piecewise continuous map, $m_x$ the uniform distribution over $T^{-1}(x)$, and $\fspace{X}$ the algebra of BV functions.
\end{exem}

While we will be primarily interested in transition kernels which are backward walks for maps with expanding properties, the question of the spectral gap for $\op{L}_{\mchain{M},A}$ is relevant in all generality.

It is common to single out the potentials $A$ such that $\op{L}_A\one=\one$; these potentials are said to be \emph{normalized}, and can be characterized in several manners. One particularly relevant one is to observe that $A$ is normalized exactly when $e^A \dd m_x$ is a probability measure for all $x$, i.e. when  $(e^A\dd m_x)_{x\in\Omega}$ is a transition kernel.

Let $\fspace{X}$ be a Banach space of functions defined on $\Omega$, whose norm is denoted by $\lVert\cdot\rVert$, and assume that $\op{L}_{\mchain{M},A}$ acts continuously on $\fspace{X}$ \begin{defi}
We shall say that $\op{L}_{\mchain{M},A}$ satisfies a Ruelle-Perron-Frobenius (RPF) Theorem on $\fspace{X}$ if there exist a positive function $h\in\fspace{X}$ and a positive constant $\rho$ such that $\op{L}_{\mchain{M},A} h=\rho h$, and there exist a positive, finite measure $\nu$ such that $\op{L}_{\mchain{M},A}^*\nu= \rho \nu$. 

Then the positive measure $\mu_A$ defined by $\dd\mu_A = h \dd\nu$ (choosing the eigenfunction $h$ so as to make it a probability)  is called the \emph{RPF measure} of the potential $A$.

We will say that $\op{L}_A$ has a spectral gap if it satisfies the RPF Theorem and for some $C\ge 1$ and $\delta\in(0,1)$ and all $t\in\mathbb{N}$, all $f\in \fspace{X}$ such that $\nu(f)=0$ we have
\[\lVert \op{L}_{\mchain{M},A}^t f \rVert \le C \rho^t (1-\delta)^t \lVert f\rVert\]
\end{defi}
The definition of spectral gap above may seem formally stronger than more usual definitions, but we will get it in this form and it is the definition needed to apply \cite{GKLM}; moreover it can be shown to be equivalent to more standard definitions, see e.g. \cite{K:HT}.\\

A classical computation shows that whenever $\fspace{X}$ is large enough and $\mchain{M}$ is a backward walk for a map $T$, the RPF measure $\mu_A$ is $T$-invariant. Since the framework of transition kernels is not completely standard, let us give some details.

First one observes that for all $f\in\fspace{X}$, we have
\begin{align*}
\op{L}_{\mchain{M},A}(g\cdot f\circ T) (x)
  &= \int_\Omega e^{A(x_1)} g(x_1) f(\underbrace{T(x_1)}_{=x}) \dd m_x(x_1) = f(x) \cdot \op{L}_{\mchain{M},A}(g) (x)\\
\end{align*}

Then setting 
\[\tilde A = A + \log h_A-\log h_A\circ T -\log \rho_A\]
we get a new potential which is normalized and such that $\op{L}_{\mchain{M},A}$ and $\op{L}_{\mchain{M},\tilde A}$ are conjugated up to a constant. Indeed:
\[\op{L}_{\mchain{M},\tilde A}f(x) = \int e^{A(x_1)}\frac{h(x_1)}{\rho h(T(x_1))} f(x_1) \dd m_x(x_1) = \frac{1}{\rho h} \op{L}_{\mchain{M},A}( hf ) (x)\]
and in particular
\[\op{L}_{\mchain{M},\tilde A} \one = \frac{1}{\rho h} \op{L}_{\mchain{M},A} h = \one\]
i.e. $\tilde A$ is normalized, which can also be interpreted as having $1$ as eigenvalue and $\one$ as eigenfunction. Similarly, one shows that $\op{L}_{\mchain{M},A}^*$ has eigenprobability $\mu_A$. Then for all $f\in\fspace{X}$:
\[\int f\circ T \dd\mu_A = \int f\circ T \dd\big(\op{L}_{\mchain{M},\tilde A} \mu_A\big) = \int \op{L}_{\mchain{M},\tilde A}(\one \cdot f\circ T) \dd\mu_A = \int f \dd\mu_A\]
As soon as $\fspace{X}$ separates measures, we can deduce that $\mu_A$ is $T$-invariant. To proceed as above, we have implicitly assumed that $\op{L}_{\mchain{M},\tilde A}$ also acts on $\fspace{X}$; this and the separation property will be easily seen to hold in all our cases of interest.

\subsection{Generalized H\"older spaces}\label{ssec:modulus}

In this Section we describe a class of Banach algebras of functions providing a good compromise between generality and simplicity.

It has become customary in dynamical systems to use H\"older spaces, which are nothing else than spaces of functions having a modulus of continuity of power type. It makes about as much sense to consider arbitrary modulus of continuity, and this leads to generalized H\"older spaces.

We will call \emph{modulus of continuity} a continuous, increasing, concave function $\omega:[0,+\infty) \to[0,+\infty)$ such that $\omega(0)=0$, and say that a function $f:\Omega\to\mathbb{R}$ (or $\mathbb{C}$) is $\omega$-H\"older if for some constant $C$ we have
\[ \lvert f(x)-f(y) \rvert \le C\omega(d(x,y)) \qquad \forall x,y\in \Omega.\]
The least such $C$ is denoted by $\Hol_\omega(f)$ and we denote by $\C^\omega(\Omega)$ the space of all such functions, endowed with the norm
\[\lVert f\rVert_\omega := \lVert f\rVert_\infty + \Hol_\omega(f).\]
This makes $\C^\omega(\Omega)$ a Banach algebra, i.e. it is complete and $\lVert fg \rVert_\omega \le \lVert f\rVert_\omega \lVert g\rVert_\omega$. This is elementary and can be seen directly, or it can be deduced from the analogous result for Lipschitz functions (i.e. the case $\omega(t)=t$): indeed $\omega$-H\"older functions are nothing else than Lipschitz functions with respect to the metric $\omega\circ d$ obtained by composing the original metric $d$ with the modulus $\omega$  (this is indeed a metric since $\omega$ is concave).

This observation could make one think that we did not gain any generality; but in many cases the modified distance $\omega\circ d$ is less natural than $d$ and, more importantly, we will rely on expansion properties of the map $T:\Omega\to\Omega$, which will be expressed in the metric $d$; in general these properties are not invariant under such a change of distance. Moreover we will sometimes ask potentials and observable to have different regularities, and will thus consider several moduli of continuity simultaneously.

Note that, $\Omega$ having finite diameter, only the germ at zero of $\omega$ truly matters: changing $\omega$ away from zero only changes $\lVert\cdot\rVert_\omega$ into an equivalent norm.\\

The H\"older moduli of continuity, $\omega_\alpha(t) = t^\alpha$ for $\alpha\in (0,1]$, are of particular importance. In subscripts, we shall replace $\omega_\alpha$ by $\alpha$.
We will use a generalization including powers of logarithm, which defined by the following.
\begin{lemm}\label{lemm:holderlog}
For all $\alpha\in (0,1]$ and $\beta\in \mathbb{R}$, there is a modulus of continuity $\omega_{\alpha+\beta\log}$ such that
\[\omega_{\alpha+\beta\log}(r)\sim \frac{r^\alpha}{\lvert \log r\rvert^{\beta}} \text{ as }r\to 0\]
and such that for some $\theta\in (0,1)$ and all $r\in [0,1]$
\[\omega(r/2) \le \theta \omega(r).\]
\end{lemm}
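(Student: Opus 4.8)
The plan is to write down an explicit formula for $\omega_{\alpha+\beta\log}$ near $0$, check by a direct computation that it is increasing and concave there, glue it to an affine function further out, and obtain the contraction property from the factor $2^{-\alpha}<1$ produced by the leading power $r^{\alpha}$ --- which is precisely where the hypothesis $\alpha>0$ enters.

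First I would fix a constant $r_0\ge 2$, so that $\ell(r):=\log(r_0/r)$ is positive for $r\in(0,1]$, and set $\varphi(r):=r^{\alpha}\ell(r)^{-\beta}$ for $r>0$ and $\varphi(0):=0$. A logarithmic differentiation gives $\varphi'/\varphi=\tfrac1r\bigl(\alpha+\beta/\ell(r)\bigr)$, and one more differentiation,
\[\frac{\varphi''(r)}{\varphi(r)}=\frac1{r^{2}}\Bigl[\,\alpha(\alpha-1)+\frac{(2\alpha-1)\beta}{\ell(r)}+\frac{\beta(\beta+1)}{\ell(r)^{2}}\,\Bigr].\]
Since $\ell(r)\to+\infty$ as $r\to 0^{+}$, the bracket tends to $\alpha(\alpha-1)<0$ when $\alpha\in(0,1)$, while $\alpha+\beta/\ell(r)\to\alpha>0$; hence there is $\epsilon\in(0,1]$, depending only on $\alpha$ and $\beta$, such that $\varphi$ is positive, strictly increasing and strictly concave on $(0,\epsilon]$, with $\varphi(r)\to0$ as $r\to0^{+}$ (a power beats any power of a logarithm). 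When $\alpha=1$ the bracket reduces to $\beta/\ell(r)+\beta(\beta+1)/\ell(r)^{2}$, negative near $0$ whenever $\beta<0$ (and $\varphi(r)\equiv r$ when $\beta=0$), so the same conclusion holds; the case $\alpha=1$, $\beta>0$ does not occur in our applications and may be set aside.

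Next I would keep $\omega_{\alpha+\beta\log}:=\varphi$ on $[0,\epsilon]$ and extend it affinely, $\omega_{\alpha+\beta\log}(r):=\varphi(\epsilon)+\varphi'(\epsilon)(r-\epsilon)$ for $r\ge\epsilon$. Writing $\omega$ for this function, it is continuous (value and derivative agree at $\epsilon$), vanishes at $0$, is strictly increasing on $(0,+\infty)$ (its slope beyond $\epsilon$ is $\varphi'(\epsilon)>0$), and is concave (concave on $[0,\epsilon]$, affine afterwards, with continuous non-increasing derivative) --- hence a genuine modulus of continuity; and $\ell(r)=\log r_0-\log r\sim\lvert\log r\rvert$ as $r\to0$ yields $\omega(r)=\varphi(r)\sim r^{\alpha}/\lvert\log r\rvert^{\beta}$.

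Finally, for the contraction estimate I would split $[0,1]$. On $(0,\epsilon]$ one computes $\omega(r/2)/\omega(r)=2^{-\alpha}\bigl(\ell(r)/\ell(r/2)\bigr)^{\beta}$ with $\ell(r/2)=\ell(r)+\log 2$, so this ratio tends to $2^{-\alpha}<1$; hence there is $\delta\in(0,\epsilon]$ with $\omega(r/2)\le\theta_0\,\omega(r)$ on $(0,\delta]$, where $\theta_0:=\tfrac12(1+2^{-\alpha})<1$. On the compact interval $[\delta,1]$ the continuous function $r\mapsto\omega(r/2)/\omega(r)$ is everywhere $<1$ because $\omega$ is strictly increasing, hence bounded there by some $\theta_1<1$; then $\theta:=\max(\theta_0,\theta_1)<1$ does the job, the inequality being trivial at $r=0$. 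The only genuinely computational point is the sign of $\varphi''$ near $0$ --- i.e. its reduction to the sign of $\alpha(\alpha-1)$ --- and the role of the assumption $\alpha>0$ is exactly visible in the limit $\lim_{r\to0}\omega(r/2)/\omega(r)=2^{-\alpha}$.
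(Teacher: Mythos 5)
Your proof takes the same starting point as the paper's (the formula $r^\alpha(\log(r_0/r))^{-\beta}$), but follows a genuinely different route to the two requirements. The paper simply takes $r_0$ large enough to make the formula increasing and concave on all of $[0,1]$, and then derives the contraction estimate from the closed-form identity
\[
\frac{\omega(r/2)}{\omega(r)}=2^{-\alpha}\Bigl(1+\tfrac{\log 2}{\log(r_0/r)}\Bigr)^{-\beta},
\]
which is $\le 2^{-\alpha}$ for $\beta\ge0$, and $\le 2^{-\alpha}(1+\log 2/\log r_0)^{-\beta}<1$ for $\beta<0$ and $r_0$ large. You instead fix $r_0$, check concavity near $0$ by computing $\varphi''/\varphi$ explicitly, glue in an affine tail past some $\epsilon$, and finish the contraction estimate with a limit-at-$0$ argument plus a compactness argument on $[\delta,1]$. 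Both are correct where they apply; the paper's estimate is sharper and shorter (it gives the same constant $\theta$ uniformly, without needing the compactness step), while your version makes the dependence of concavity on $\alpha(\alpha-1)$ visible and pins down exactly where $\alpha>0$ is used.

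A point in your favour: you correctly flag that the case $\alpha=1$, $\beta>0$ does not go through, and this is not merely a limitation of your approach --- the lemma as stated is actually false there. Any modulus of continuity $\omega$ is concave with $\omega(0)=0$, hence $\omega(r)/r$ is non-increasing, so $\omega(r)\ge\omega(1)\,r$ on $[0,1]$; but $r/\lvert\log r\rvert^\beta=o(r)$ for $\beta>0$, so no concave modulus can be asymptotic to it. The paper's phrase ``if $r_0$ is large enough, the above formula defines an increasing and concave function on $[0,1]$'' silently fails for $\alpha=1$, $\beta>0$ (your bracket $\beta/\ell+\beta(\beta+1)/\ell^2$ is then positive), though this case is never used in the paper's applications. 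So this is a small error in the statement that you detected rather than a gap in your proof.

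One presentational remark: for $\alpha=1$, $\beta<0$ and $\ell$ large your bracket is $\beta/\ell+\beta(\beta+1)/\ell^2$, whose sign is eventually that of $\beta/\ell<0$, so your concavity conclusion does hold; you state this but it is worth making the dominance of the $\beta/\ell$ term explicit, since for $\beta<-1$ the second term is positive.
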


\begin{proof}
For $r\in (0,1]$ we write 
\[\omega_{\alpha+\beta\log}(r) := \frac{r^\alpha}{(\log \frac{r_0}{r} )^\beta}\]
where $r_0$ is to be determined. Note that for each $r_0$, the asymptotic behavior at $0$ is as desired.
If $r_0$ is large enough,
the above formula defines an increasing and concave function on $[0,1]$, that can thus be extended into a modulus of continuity.

If $\beta\ge 0$, we have $\omega(r/2) \le 2^{-\alpha}\omega(r)$ so that we can take $\theta=2^{-\alpha}$, and we turn to the case $\beta <0$. Then 
\[\omega(r/2) \le \theta\omega(r) \quad\text{with}\quad
\theta = 2^{-\alpha}\Big(1+\frac{\log 2}{\log r_0} \Big)^{-\beta}\]
and it suffices to take $r_0$ large enough to ensure $\theta<1$.
\end{proof}

When $\alpha=0$, we impose $\beta>0$ and take
\[\omega_{\beta\log}(r) := \big(\log\frac{r_0}r\big)^{-\beta} \qquad\forall r\in [0,1]\]
where $r_0$ is large enough (in function of $\beta$) to ensure $\omega_{\beta\log}$ is increasing and concave on $[0,1]$. Then we extend it arbitrarily (e.g. linearly) to $[0,+\infty)$. Lemma \ref{lemm:holderlog} does not stand for $\alpha=0$.

\subsection{Proof of Corollary \ref{coro:mainES}}

Corollary \ref{coro:mainES} states that, in the context of any of Theorems \ref{theo:mainHol}-\ref{theo:mainDense}, the RPF measure of a potential $A\in V$ (obtained as $\dd\mu_A=h_A\dd\nu_A$ where $h_A$ is positive eigenfunction of $\op{L}_{T,A}$ an $\nu_A$ is a probability eigenmeasure of the dual operator) is the unique equilibrium state of $A$. This is of a very classical flavor, but given the diversity of assumptions one finds in the literature it deserves a proof in our precise situation. 

We use \cite{GKLM}, Section 7, but some adaptation is needed: we don't have a spectral gap for \emph{all} potentials in $\C^\alpha(\mathbb{T})$. However we have a spectral gap for an open set $\mathcal{U}$ containing the linear subspace $V$, which is $\lVert\cdot\rVert_\infty$-dense in $\C^\alpha(\mathbb{T})$, and this will be sufficient.

First, observe that for all $A\in\mathcal{U}$ we have $P(A)=\log \lambda_A$, where $P$ is the topological pressure and $\lambda_A$ is the leading eigenvalue of $\op{L}_{T,A}$. To see this, one only has to prove that $P(A)=0$ whenever $A$ is normalized: indeed each potential in $\mathcal{U}$ differs from a normalized potential by a coboundary $\log h_A-\log h_A\circ T$ and a constant $\log\lambda_A$, and both $\lambda_A$ and $P(A)$ change in the same way when $A$ is added a coboundary and a constant. To see that $P(A)=0$ whenever $A$ is normalized, one considers the two definitions of pressure based on $(n,\varepsilon)$-separated sets and $(n,\varepsilon)$-spanning sets respectively, and applies them to a set obtained as follows. Let $E_0$ be a set of $1/\varepsilon$ points regularly spaced on the circle, then let $E$ be the set of all inverse images by $T^n$ of elements of $E_0$. Since $T$ is non-contracting, $E$ is both $(n,\varepsilon)$-separated and $(n,\varepsilon)$-spanning. The normalization property enables one to group the terms of $\sum_{x\in E} e^{\sum_{i=0}^{n-1} A(T^i(x))}$ into $1/\varepsilon$ sums over each set of inverse images of each element $x_0\in E_0$, which by normalization each sum to $1$. It follows that $P(A)=0$ whenever $A$ is normalized, and thus $P(A)=\log\lambda_A$.

By Theorem 9.12 in \cite{Walters} (note that $h$ is upper semi-continuous since $T$ is expansive), one can write 
\[h(\mu)= \inf_{\varphi\in \C^0(\mathbb{T})} \Big\{ P(\varphi)-\int \varphi \dd\mu \Big\}.\]
Since $V$ is $\lVert\cdot\rVert_\infty$-dense in $\C^\alpha(\mathbb{T})$, and therefore dense in $\C^0(\mathbb{T})$, and since $P$ is continuous in the uniform norm (\cite{Walters} Theorem 9.7) we can rewrite this as
\[h(\mu)= \inf_{B\in V} \Big\{ P(B)-\int B \dd\mu \Big\} = \inf_{B\in V} \Big\{ \log\lambda_B-\int B \dd\mu \Big\},\]
or yet $h(\mu)=\inf_{V} H(\mu,B)$ where $H(\mu,B):= \log\lambda_B-\int B \dd\mu$.

We are trying to maximize the functional $P_A(\mu):= h(\mu)+\int A \dd\mu$, which amounts to maximizing
\[P_A(\mu)-P(A) = \inf_{B\in V} H(\mu,B)+\int A \dd\mu -\log\lambda_A = \inf_{B\in V} \big( H(\mu,B) - H(\mu,A) \big)\]
For $\mu\neq\mu_A$, the formula for the derivative of $H(\mu,B)$ with respect to $B$ (\cite{GKLM} Proposition 7.2) and the density of $V$ show that there is a $B\in V$ near $A$ such that $H(\mu,B)<H(\mu,A)$, so that $P_A(\mu)<P(A)$. We only have left to prove that $H(\mu_A,B)\ge H(\mu_A,A)$ for all $B\in V$; but  $A$ is a critical point of $B\mapsto H(\mu_A,B)$ which is a convex functional on $V$, and we are done.

\subsection{Rates of decay}\label{ssec:rates}

In this section, we establish some notation and a few elementary  results to study the rate of decay of the iteration of a map. Let $X$ be a metric space, whose metric shall be denoted by $\rho$; in the sequel, $X$ will be either the phase space $\Omega$, or a Banach space of potential or observables, or a space of measures.

Let $P:X\to X$ be a map (which will be either an inverse branch of the dynamical system under study, or a transfer operator, or the dual of a transfer operator). To motivate the next definition, consider 
\[F(t,r) := \sup_{\substack{\rho(x,y)\le r \\ n\ge t}} \rho(P^n(x),P^n(y)) \]
and observe that
$F$ is non-increasing in $t$, non-decreasing in $r$, and satisfies
\[ F(t_1+t_2,r) \le F(t_1,F(t_2,r)).\]
This property encompasses a lot of information, as short time information at some scales reflects on longer time information at some other scales. This is what we shall harness to get effective bounds.
\begin{defi}
A non-negative function $F : \mathbb{N} \times (0,R) \to (0,+\infty)$ (where $R$ is a positive number or $+\infty$) is said to be a \emph{decay function} if
\begin{enumerate}
\item $F(t,r)$ is non-increasing in $t$, non-decreasing and concave in $r$,
\item $F(t,r)\to 0$ as either $t\to\infty$ or $r\to 0$, the other variable being fixed,
\item for some $C>0$ and all $t,r$: $F(t,r)\le Cr$,
\item for all $t_1,t_2$ and $r$: $F(t_1+t_2,r) \le F(t_1,F(t_2,r))$.
\end{enumerate}
\end{defi}
The concavity in $r$ will be important when using optimal transport, as it will enable us to bound above an integral of the decay function by the decay function of an integral. The third condition, corresponding to a uniform Lipschitz condition on the maps $(P^t)_{t\in\mathbb{N}}$, will ensure some uniformity of the behavior of $F$ (it is used implicitly in Lemmas \ref{lemm:expdec} and \ref{lemm:poldec} below).

We shall say that a map $P$ has decay rate $F$ (meaning implicitly: ``at least $F$''; sometimes we specify ``in the metric $\rho$'') if for all $t,x,y$,
\begin{equation}
\rho(P^t(x),P^t(y)) \le F(t,\rho(x,y)).
\label{eq:decay}
\end{equation}

It will be convenient to introduce for all $\theta\in (0,1)$
the \emph{decay times} of $F$ as
\[ \hl_\theta(r) = \min \{t\in\mathbb{N} : F(t,r)\le \theta r \} = \min \{t\in\mathbb{N} :  \forall s\ge r, \ F(t,s)\le \theta s \} \]
Where the second equality comes from concavity of $F(t,\cdot)$.

The standard notion of half-life corresponds to $\tau_{\sfrac{1}{2}}$, and is constant in case of exponential decay. More generally, we 
have the following elementary result.
\begin{lemm}\label{lemm:expdec}
The following are equivalent:
\begin{enumerate}
\item\label{enumi:expdec1} there exist $C\ge 1$, $\delta\in(0,1)$ such that $F(t,r)\le C (1-\delta)^t r$ for all $t,r$,
\item\label{enumi:expdec2} for some $\theta\in (0,1)$, there exist $D>0$ such that for all $r$: $\hl_\theta(r)\le D$,
\item\label{enumi:expdec3} for all $\theta\in (0,1)$, there exist $D>0$ such that for all $r$: $\hl_\theta(r)\le D$.
\end{enumerate}
\end{lemm}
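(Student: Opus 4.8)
\emph{Proof plan.} I would establish the cycle of implications $\ref{enumi:expdec1}\Rightarrow\ref{enumi:expdec3}\Rightarrow\ref{enumi:expdec2}\Rightarrow\ref{enumi:expdec1}$. The middle arrow $\ref{enumi:expdec3}\Rightarrow\ref{enumi:expdec2}$ is a tautology, since \ref{enumi:expdec3} asserts \ref{enumi:expdec2} for every $\theta$; so all the content lies in the two remaining arrows.

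\emph{From \ref{enumi:expdec1} to \ref{enumi:expdec3}.} Fix $\theta\in(0,1)$. Since $C\ge 1>\theta$ and $1-\delta\in(0,1)$, one may choose the positive integer $D=\lceil\log(C/\theta)/\log(1/(1-\delta))\rceil$, which satisfies $C(1-\delta)^{D}\le\theta$ by construction. Then for every $r$, condition \ref{enumi:expdec1} gives $F(D,r)\le C(1-\delta)^{D}r\le\theta r$, hence $\hl_\theta(r)\le D$, uniformly in $r$.

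\emph{From \ref{enumi:expdec2} to \ref{enumi:expdec1}.} Assume $\hl_\theta(r)\le D$ for all $r$. Since $F$ is non-increasing in its first variable, this yields $F(D,s)\le\theta s$ for all $s\in(0,R)$. I would then prove by induction on $k\ge 1$ that $F(kD,r)\le\theta^{k}r$ for every $r$: the inductive step applies the subadditivity axiom $F((k+1)D,r)\le F(D,F(kD,r))$ and then uses the monotonicity of $F(D,\cdot)$ together with the induction hypothesis to bound the right-hand side by $F(D,\theta^{k}r)\le\theta\cdot\theta^{k}r$. For the remaining short times $t<D$, I would instead invoke the uniform Lipschitz bound (the third axiom for decay functions), $F(t,r)\le C_{0}r$ for some constant $C_{0}$. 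Writing an arbitrary $t\in\mathbb{N}$ as $t=kD+s$ with $0\le s<D$ and $k=\lfloor t/D\rfloor$, the monotonicity of $F$ in $t$ combines these two estimates into $F(t,r)\le\max(1,C_{0})\,\theta^{k}r$, valid for all $t$. Since $k\ge t/D-1$ and $\theta\in(0,1)$, this rewrites as $F(t,r)\le\theta^{-1}\max(1,C_{0})\,\big(\theta^{1/D}\big)^{t}r$, which is condition \ref{enumi:expdec1} with $C:=\theta^{-1}\max(1,C_{0})\ge 1$ and $1-\delta:=\theta^{1/D}\in(0,1)$.

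I do not expect a genuine obstacle here; the only step requiring care is the last one, where one must thread the monotonicity of $F$ in each of its two variables through the iteration of the subadditivity axiom, cover the short-time range $t<D$ via the uniform Lipschitz axiom, and check that the repackaged constants genuinely land in the prescribed ranges $C\ge 1$ and $\delta\in(0,1)$ — which they do precisely because $\theta<1$.
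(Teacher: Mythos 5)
Your proof is correct. The paper omits the proof of this lemma (labelling it ``elementary''), but it does prove the polynomial analogue, Lemma~\ref{lemm:poldec}, by exactly the same telescoping of decay times — counting the number of factor-$\theta$ reductions achievable in time $t$ — so your argument is the natural exponential specialization of that strategy, and your use of the uniform Lipschitz axiom to cover the short-time range $t<D$ is precisely the ``implicit use'' the paper flags in its remark following the definition of decay functions.
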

When $P$ is Lipschitz, Lemma \ref{lemm:expdec} provides a (uniform) exponential decay for $P$ as soon as for some $\theta\in(0,1)$, some $t_0\in\mathbb{N}$ and all $x,y\in X$:
\[\rho(P^{t_0}(x),P^{t_0}(y)) \le \theta \rho(x,y).\]

Concerning polynomial decay, we have the following analogue.
\begin{lemm}\label{lemm:poldec}
Let $\alpha$ be a positive real number. The following are equivalent:
\begin{enumerate}
\item\label{enumi:poldec1} there exist $B\ge 1,b\in(0,1)$ such that $F(t,r)\le \frac{Br}{(t r^\alpha+b)^{\frac{1}{\alpha}}}$ for all $t,r$,
\item\label{enumi:poldec2} for some $\theta\in (0,1)$, there exist $D>0$ such that $\hl_\theta(r)\le \frac{D}{r^\alpha}$ for all $r$,
\item\label{enumi:poldec3} for all $\theta\in (0,1)$, there exist $D>0$ such that $\hl_\theta(r)\le \frac{D}{r^\alpha}$ for all $r$.
\end{enumerate}
\end{lemm}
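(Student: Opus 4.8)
The plan is to prove the cyclic chain of implications \ref{enumi:poldec1} $\Rightarrow$ \ref{enumi:poldec3} $\Rightarrow$ \ref{enumi:poldec2} $\Rightarrow$ \ref{enumi:poldec1}, exactly mirroring the structure of Lemma \ref{lemm:expdec}. The implication \ref{enumi:poldec3} $\Rightarrow$ \ref{enumi:poldec2} is trivial (take any fixed $\theta$). For \ref{enumi:poldec1} $\Rightarrow$ \ref{enumi:poldec3}: fix $\theta\in(0,1)$; we want $t$ with $\frac{Br}{(tr^\alpha+b)^{1/\alpha}}\le\theta r$, i.e. $tr^\alpha+b\ge (B/\theta)^\alpha$, i.e. $t\ge \big((B/\theta)^\alpha-b\big)/r^\alpha$. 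Since $b>0$ and $B\ge1\ge\theta$, the numerator is positive, so $\hl_\theta(r)\le \lceil ((B/\theta)^\alpha-b)/r^\alpha\rceil$. To absorb the ceiling into a clean bound $D/r^\alpha$ valid for \emph{all} $r\in(0,R)$ I use condition (3) of the decay-function definition together with the uniform Lipschitz constant $C$: for $r$ bounded below (say $r\ge r_1$) one has $\hl_\theta(r)=0$ once $F(0,r)\le\theta r$ fails only on a bounded range, and anyway for such $r$ the bound $D/r^\alpha$ is achieved by enlarging $D$; for small $r$ the ceiling costs at most $+1\le (1/r^\alpha)\cdot R^\alpha$ on the relevant range (or one simply notes $\lceil x\rceil\le x+1$ and that $1\le D'/r^\alpha$ for $r<R$ after possibly shrinking the domain and enlarging the constant). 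This is the kind of routine bookkeeping the paragraph after the definition flags as "used implicitly."

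The substantive direction is \ref{enumi:poldec2} $\Rightarrow$ \ref{enumi:poldec1}, where a single scale-dependent half-life bound must be bootstrapped, via the submultiplicativity property $F(t_1+t_2,r)\le F(t_1,F(t_2,r))$ and concavity, into a global polynomial decay estimate. The idea: suppose $\hl_\theta(r)\le D/r^\alpha$ for all $r$. Starting from $r_0:=\rho(x,y)$, after $\hl_\theta(r_0)\le D/r_0^\alpha$ steps the quantity $F$ has dropped to $\le\theta r_0$; after a further $\le D/(\theta r_0)^\alpha$ steps it drops to $\le\theta^2 r_0$; and so on. So after $N_n:=\sum_{j=0}^{n-1}D/(\theta^j r_0)^\alpha = \frac{D}{r_0^\alpha}\sum_{j=0}^{n-1}\theta^{-j\alpha}$ steps one has $F(N_n,r_0)\le\theta^n r_0$. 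Since $\theta^{-\alpha}>1$, the geometric sum gives $N_n \le \frac{D}{r_0^\alpha}\cdot\frac{\theta^{-n\alpha}}{1-\theta^\alpha}\cdot C_0$ for a constant $C_0$ depending only on $\theta,\alpha$; equivalently, if $t$ lies between $N_n$ and $N_{n+1}$ then $t\gtrsim \theta^{-n\alpha}/r_0^\alpha$, i.e. $\theta^n \lesssim (t r_0^\alpha)^{-1/\alpha}\cdot(\text{const})$, hence $F(t,r_0)\le\theta^n r_0 \lesssim \frac{r_0}{(t r_0^\alpha)^{1/\alpha}}$. Introducing the additive constant $b$ in the denominator handles small $t$ (where the naive bound would exceed $C r_0$): one uses condition (3), $F(t,r)\le Cr$, to cap the estimate for $t$ below the first decay time, and chooses $b\le 1$ small enough that $\frac{Br}{(tr^\alpha+b)^{1/\alpha}}\ge Cr$ throughout that initial range while matching the geometric estimate afterwards.

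The main obstacle is precisely this last matching in \ref{enumi:poldec2} $\Rightarrow$ \ref{enumi:poldec1}: converting the discrete, exponentially-spaced sequence of guaranteed decays ($F(N_n,r_0)\le\theta^n r_0$ at times $N_n$) into a bound holding at \emph{every} time $t$ and uniformly in the starting radius $r_0$, with the correct single formula $\frac{Br}{(tr^\alpha+b)^{1/\alpha}}$. The key technical points are: (i) interpolating between consecutive $N_n$ using monotonicity of $F$ in $t$ to get a lower bound on $t$ in terms of $\theta^n$; (ii) keeping all constants independent of $r_0$, which works because the geometric series $\sum\theta^{-j\alpha}$ has ratio and leading behavior depending only on $\theta,\alpha$ and the $r_0^{-\alpha}$ factors out cleanly; and (iii) using $F(t,r)\le Cr$ to control the regime before the first decay so that the chosen $b$ makes the proposed formula an upper bound there as well. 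Concavity in $r$ is not really needed for this lemma itself (it was invoked to justify the second equality in the definition of $\hl_\theta$), but monotonicity in both variables and submultiplicativity are used throughout.
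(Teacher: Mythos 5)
Your cyclic chain $(i)\Rightarrow(iii)\Rightarrow(ii)\Rightarrow(i)$ and the iteration idea for $(ii)\Rightarrow(i)$ match the paper's proof exactly, but there is a direction error in the one step that carries the whole argument. Having fixed $n$ so that $t$ lies between $N_n$ and $N_{n+1}$ (hence $F(t,r_0)\le\theta^n r_0$), you write ``$t\gtrsim\theta^{-n\alpha}/r_0^\alpha$, i.e.\ $\theta^n\lesssim(tr_0^\alpha)^{-1/\alpha}$''. That implication runs the wrong way: $t\gtrsim\theta^{-n\alpha}/r_0^\alpha$ gives $\theta^{-n\alpha}\lesssim tr_0^\alpha$ and hence $\theta^n\gtrsim(tr_0^\alpha)^{-1/\alpha}$, a \emph{lower} bound on $\theta^n$, which is useless for bounding $F(t,r_0)$ from above. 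What you actually need is the other half of your own sandwich, namely the maximality-of-$n$ bound $t<N_{n+1}\lesssim\theta^{-(n+1)\alpha}/r_0^\alpha$, which does yield $\theta^{(n+1)\alpha}\lesssim(t r_0^\alpha)^{-1}$ and therefore $\theta^n\lesssim(tr_0^\alpha)^{-1/\alpha}$. This is precisely how the paper argues: it takes $k$ to be the \emph{largest} integer with $t\ge\sum_{j=0}^{k}D\theta^{-j\alpha}/r^\alpha$, concludes $F(t,r)\le\theta^k r$, and then applies the \emph{reverse} inequality $t\le\sum_{j=0}^{k+1}D\theta^{-j\alpha}/r^\alpha$ to get $\theta^{-\alpha(k+2)}\ge tr^\alpha(\theta^{-\alpha}-1)/D+1$, hence $\theta^k\lesssim(tr^\alpha+b)^{-1/\alpha}$. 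With that slip corrected, the rest of your write-up (the geometric summation with $r_0^{-\alpha}$ factoring out, the treatment of small $t$ via $F(t,r)\le Cr$ and a small additive $b$, the remark that concavity is only used in the definition of $\hl_\theta$) is sound and aligns with the paper's proof.
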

When these conditions are realized, we will say that $F$ is polynomial (with degree $1/\alpha$).

\begin{proof}
It is clear that \ref{enumi:poldec3} implies \ref{enumi:poldec2} and that \ref{enumi:poldec1} implies \ref{enumi:poldec3}. 

Assume \ref{enumi:poldec2} and let us prove \ref{enumi:poldec1}. Let $t\in\mathbb{N}$, $r\in [0,\infty)$. Let $k\in\mathbb{N}$ be the largest integer such that
\[ t \ge \frac{D}{r^\alpha}  + \frac{D}{ \theta^\alpha r^\alpha}  + \dots + \frac{D}{\theta^{\alpha k}r^\alpha} \]
(taking $B$ large enough, $b$ small enough we can easily deal with the range $t\le D/ r^\alpha$, which we thus ignore from now on).
We have $F(t,r) \le \theta^k r$ and:
\begin{align*}
t &\le \frac{D}{r^\alpha} + \frac{D}{ \theta^\alpha r^\alpha}  + \dots + \frac{D}{\theta^{\alpha (k+1)}r^\alpha}  \\
t &\le \frac{D}{r^\alpha} \frac{\theta^{-\alpha(k+2)} -1}{\theta^{-\alpha}-1} \\
\theta^{-\alpha(k+2)} &\ge tr^\alpha \frac{\theta^{-\alpha}-1}{D} +1 \\
\theta^k &\le \frac{1}{\big( tr^\alpha (\theta^\alpha-\theta^{2\alpha})/D+1\big)^{\frac{1}{\alpha}}}\\
F(t,r) &\le \frac{Br}{(tr^\alpha+b)^{\frac{1}{\alpha}}} \qquad\text{for some }B,b.
\end{align*}
\end{proof}

We shall only consider the two families of decay functions given in the first items of Lemmas \ref{lemm:expdec} and \ref{lemm:poldec}, but more general decay rates and decay times can be considered.

\subsection{Optimal transportation}\label{ssec:transport}

Optimal transportation is a vast subject, from which we will only use the Wasserstein metric with respect to various distances of the form $\omega\circ d$ where $\omega$ is a modulus of continuity. One defines it for any $\mu,\nu\in\proba(\Omega)$ as the least cost needed to move $\mu$ to $\nu$, when the cost of moving an amount of mass from $x$ to $y$ is $\omega\circ d(x,y)$. Formally:
\[\wass_\omega(\mu,\nu) = \inf_{\Pi\in\Gamma(\mu,\nu)} \int_{\Omega\times\Omega} \omega\circ d(x,y) \dd\Pi(x,y)\]
where $\Gamma(\mu,\nu)$ is the set of probability measures on $\Omega\times\Omega$ with marginals $\mu$ and $\nu$; one calls elements of $\Gamma(\mu,\nu)$ \emph{transport plans}, and a transport plan is said to be optimal (with respect to $\omega$) if it realizes the infimum in $\wass_\omega(\mu,\nu)$. Optimal transport plans always exist, and $\wass_\omega$ metrizes the weak-$*$ convergence of measures (here the compactness of $\Omega$ is used). The space $\proba(\Omega)$ endowed with $\wass_\omega$ is compact (and in particular complete).

The Wasserstein metric has the advantage of being flexible: from a upper bound, we can deduce the existence of a transport plan with small cost, and to obtain a upper bound we only have to produce a transport plan with small cost. It makes it quite easy to prove that good transport plans exist by a sequence of small arguments (notably using the fact that $\wass_\omega$ is indeed a metric: it satisfies the triangular inequality) that would otherwise need careful explicit coupling arguments.

To prove lower bounds or to use upper bounds on the Wasserstein metric, an important tool is the Kantorovich duality ensuring that we can rewrite the Wasserstein metric as follows:
\[\wass_\omega(\mu,\nu) = \sup_{\Hol_\omega(f)\le 1} \lvert  \mu(f)- \nu(f)\rvert.\]
The inequality $\ge$ is the easy one, obtained by a direct computation, and is in fact the only one we shall use.

The following generalization of Corollary 5.2 from \cite{KLS}  is simple but central to our methods.
Let $\op{L}$ be a positive bounded operator on $\C^\omega(\Omega)$ such that $\op{L}\one=\one$ and let $\op{L}^*$ be its dual, acting on 
probability measures of $\Omega$. Let $F$ be a decay function and let $\omega'$ be a second modulus of continuity.

\begin{prop}\label{prop:WassDecay}
If $\op{L}^*$ has decay rate at least $F$ in the distance $\wass_\omega$ (recall \eqref{eq:decay}), then it has a unique invariant probability measure $\mu$ and for all $f\in\C^\omega(\Omega)$  we have
\[ \lVert \op{L}^t f - \mu(f) \rVert_\infty \le \Hol_\omega(f) F(t,\omega(\diam \Omega)) \quad\text{and}\quad \Hol_{\omega'}(\op{L}^t f) \le \Hol_{\omega}(f) F^\omega_{\omega'}(t)\]
where 
\[F^\omega_{\omega'}(t) := \sup_{s\in (0,\diam\Omega)} \frac{F(t,\omega(s))}{\omega'(s)}.\]
In particular, when $F(t,r)=C(1-\delta)^t r$ (where $C\ge 1$ and $\delta\in(0,1)$), we have
\[\lVert \op{L}^t f -\mu(f) \rVert_\omega \le C'(1-\delta)^t \Hol_\omega(f)\]
\end{prop}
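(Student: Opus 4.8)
The plan is to exploit the Kantorovich duality inequality $\wass_\omega(\mu,\nu)\ge |\mu(f)-\nu(f)|$ for every $f$ with $\Hol_\omega(f)\le 1$, together with the semigroup property of the decay function $F$, to convert the hypothesis ``$\op{L}^*$ has decay rate $F$ in $\wass_\omega$'' into the two stated pointwise bounds. First I would establish existence and uniqueness of the invariant measure $\mu$: since $\op{L}\one=\one$, the dual $\op{L}^*$ maps $\proba(\Omega)$ to itself; by property (2) of a decay function, $F(t,r)\to0$ as $t\to\infty$, so for any two measures $\nu_1,\nu_2$ we have $\wass_\omega((\op{L}^*)^t\nu_1,(\op{L}^*)^t\nu_2)\le F(t,\wass_\omega(\nu_1,\nu_2))\to0$; as $(\proba(\Omega),\wass_\omega)$ is compact (hence complete), a standard contraction-type argument (or compactness plus the asymptotic contraction) gives a unique fixed point $\mu$.

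Next, for the sup-norm bound: fix $f\in\C^\omega(\Omega)$. If $\Hol_\omega(f)=0$ then $f$ is constant and both sides vanish; otherwise rescale so $\Hol_\omega(f)\le1$. For any $x\in\Omega$, write $\op{L}^t f(x) = \int f\,\dd\big((\op{L}^*)^t\delta_x\big)$ and $\mu(f)=\int f\,\dd\big((\op{L}^*)^t\mu\big)$, so by Kantorovich duality
\[
|\op{L}^t f(x)-\mu(f)| \le \wass_\omega\big((\op{L}^*)^t\delta_x,(\op{L}^*)^t\mu\big) \le F\big(t,\wass_\omega(\delta_x,\mu)\big) \le F\big(t,\omega(\diam\Omega)\big),
\]
using monotonicity of $F$ in $r$ and the crude bound $\wass_\omega(\delta_x,\mu)\le \omega(\diam\Omega)$. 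Taking the supremum over $x$ and undoing the rescaling gives the first inequality. For the Hölder bound on $\op{L}^t f$: again assume $\Hol_\omega(f)\le1$; for $x,y\in\Omega$,
\[
|\op{L}^t f(x)-\op{L}^t f(y)| \le \wass_\omega\big((\op{L}^*)^t\delta_x,(\op{L}^*)^t\delta_y\big) \le F\big(t,\wass_\omega(\delta_x,\delta_y)\big) = F\big(t,\omega(d(x,y))\big),
\]
and dividing by $\omega'(d(x,y))$ and taking the supremum over $x\ne y$ yields $\Hol_{\omega'}(\op{L}^t f)\le F^\omega_{\omega'}(t)$; rescaling gives the general $f$.

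Finally, the exponential case: with $F(t,r)=C(1-\delta)^t r$ one has $F(t,\omega(\diam\Omega))=C(1-\delta)^t\omega(\diam\Omega)$, so the sup-norm term is controlled; and $F^\omega_{\omega'}(t)=C(1-\delta)^t\sup_{s}\frac{\omega(s)}{\omega'(s)}$. Taking $\omega'=\omega$ (the relevant choice for the $\C^\omega$-norm statement), this supremum is $1$, so $\Hol_\omega(\op{L}^t f-\mu(f))=\Hol_\omega(\op{L}^t f)\le C(1-\delta)^t\Hol_\omega(f)$; adding the two contributions and absorbing $\omega(\diam\Omega)$ and $C$ into a new constant $C'$ gives $\lVert\op{L}^t f-\mu(f)\rVert_\omega\le C'(1-\delta)^t\Hol_\omega(f)$. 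The only mildly delicate point is the existence/uniqueness argument for $\mu$ when $F$ is not literally a strict contraction at a single step (it only contracts asymptotically); this is handled by noting that for large enough $t$, property (2) together with concavity forces $F(t,\cdot)$ to bring any bounded set of measures into an arbitrarily small ball, and the semigroup property (4) then propagates this — so I would expect that bookkeeping, rather than any genuine difficulty, to be the main thing to get right.
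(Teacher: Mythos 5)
Your proposal is correct and follows essentially the same route as the paper: both prove the sup-norm and $\omega'$-Hölder bounds by writing $\op{L}^t f(x)=\int f\,\dd\op{L}^{*t}\delta_x$, applying Kantorovich duality, and then invoking the decay rate $F$ in $\wass_\omega$. The paper disposes of existence/uniqueness of $\mu$ in one sentence and does not normalize $f$, but those are cosmetic differences; the substance is identical.
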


\begin{proof}
The existence of an invariant measure is standard, and uniqueness follows from the decay hypothesis.

For all $x\in\Omega$ we can write $\op{L}^t f(x) = \int \op{L}^t f \dd \delta_x = \int f \dd \op{L}^{*t} \delta_x$ so that
\begin{align*}
\Big\lvert \op{L}^t f(x) - \int f \dd \mu \Big\rvert
  &= \Big\lvert \int f \dd \op{L}^{*t} \delta_x -\int f \dd\op{L}^{*t} \mu \Big\rvert \\
  &\le \Hol_\omega(f) \wass_\omega(\op{L}^{*t} \delta_x,\op{L}^{*t} \mu) \\ 
  &\le \Hol_\omega(f) F(t,\wass_\omega(\delta_x,\mu))\\
  &\le \Hol_\omega(f) F(t,\omega(\diam \Omega)).
\end{align*}
Similarly, for all $x,y\in\Omega$ we get
\begin{align*}
\Big\lvert \op{L}^t f(x) - \op{L}^t f(y) \Big\rvert
  &= \Big\lvert \int f \dd \op{L}^{*t}\delta_x - \int f \dd \op{L}^{*t}\delta_y \Big\rvert \\
  &\le \Hol_\omega(f) \wass_\omega(\op{L}^{*t}\delta_x,\op{L}^{*t}\delta_y) \\
  &\le \Hol_\omega(f) F(t,\omega\circ d(x,y))\\
  &\le \Hol_\omega(f) F^{\omega}_{\omega'}(t) \omega'\circ d(x,y).
\end{align*}
\end{proof}

It will prove convenient to first consider Dirac measure to prove decay, and the linearity of Wasserstein metric will enable us to extended our conclusions to all measures.
\begin{lemm}\label{lemm:linearity}
If $\op{L}^*$ is a linear operator on finite signed measures which preserves the set of probability measures and $\omega$ is a modulus of continuity, then for all $\mu,\nu\in\proba(\Omega)$ and all $\Pi\in\Gamma(\mu,\nu)$,
\[\wass_\omega(\op{L}^*\mu,\op{L}^*\nu) \le \int \wass_\omega(\op{L}^*\delta_x,\op{L}^*\delta_y) \dd\Pi(x,y).\]
\end{lemm}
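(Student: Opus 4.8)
The plan is to exploit the variational (Kantorovich) duality for $\wass_\omega$ together with the linearity of $\op{L}^*$ on measures, reducing the bound to an inequality between integrals of test functions. Concretely, pick any $f$ with $\Hol_\omega(f)\le 1$; by Kantorovich duality it suffices to bound $\lvert (\op{L}^*\mu)(f) - (\op{L}^*\nu)(f)\rvert$ by the claimed integral, and then take the supremum over such $f$.

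First I would write, using the definition of the dual operator and linearity, that for any transport plan $\Pi\in\Gamma(\mu,\nu)$ we have
\[ (\op{L}^*\mu)(f) - (\op{L}^*\nu)(f) = \int_\Omega f \dd(\op{L}^*\mu) - \int_\Omega f \dd(\op{L}^*\nu) = \int_{\Omega\times\Omega} \Big( (\op{L}^*\delta_x)(f) - (\op{L}^*\delta_y)(f) \Big) \dd\Pi(x,y), \]
where the last equality uses that $\mu = \int \delta_x \dd\Pi(x,y)$ and $\nu = \int \delta_y \dd\Pi(x,y)$ (the marginals of $\Pi$) together with the linearity and boundedness of $\op{L}^*$, which let one move the integral in $\Pi$ past $\op{L}^*$ and past the pairing with $f$. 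Taking absolute values and moving them inside the integral,
\[ \big\lvert (\op{L}^*\mu)(f) - (\op{L}^*\nu)(f) \big\rvert \le \int_{\Omega\times\Omega} \big\lvert (\op{L}^*\delta_x)(f) - (\op{L}^*\delta_y)(f) \big\rvert \dd\Pi(x,y). \]

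Next, for each fixed $(x,y)$, apply the easy direction of Kantorovich duality again, now in the other direction: since $\Hol_\omega(f)\le 1$, we have $\lvert (\op{L}^*\delta_x)(f) - (\op{L}^*\delta_y)(f)\rvert \le \wass_\omega(\op{L}^*\delta_x, \op{L}^*\delta_y)$ (this uses that $\op{L}^*\delta_x$ and $\op{L}^*\delta_y$ are probability measures, which holds because $\op{L}^*$ preserves $\proba(\Omega)$). Substituting gives
\[ \big\lvert (\op{L}^*\mu)(f) - (\op{L}^*\nu)(f) \big\rvert \le \int_{\Omega\times\Omega} \wass_\omega(\op{L}^*\delta_x, \op{L}^*\delta_y) \dd\Pi(x,y), \]
and taking the supremum over all $f$ with $\Hol_\omega(f)\le 1$ on the left, Kantorovich duality yields $\wass_\omega(\op{L}^*\mu, \op{L}^*\nu) \le \int \wass_\omega(\op{L}^*\delta_x,\op{L}^*\delta_y)\dd\Pi(x,y)$, as desired.

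The only genuinely delicate point is the exchange of the $\Pi$-integral with $\op{L}^*$ and with the pairing against $f$ in the first displayed equation — i.e. justifying $\int_\Omega f \dd(\op{L}^*\mu) = \int_{\Omega\times\Omega} (\op{L}^*\delta_x)(f)\dd\Pi(x,y)$. This is a Fubini-type statement: one can verify it first for $f$ ranging over a determining class (e.g. continuous functions, or elements of $\fspace{X}$) using that $x\mapsto (\op{L}^*\delta_x)(f) = (\op{L}f)(x)$ is measurable and bounded, and that $\op{L}^*\mu = \int \op{L}^*\delta_x\dd\mu(x)$ by definition of the dual combined with the disintegration $\mu = \int \delta_x \dd\mu(x)$; boundedness of $\op{L}$ on $\C^\omega$ (or on $\C^0$) plus compactness of $\Omega$ makes all integrands bounded and measurable, so dominated convergence / Fubini applies with no subtlety beyond bookkeeping. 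Everything else is a direct application of results already recorded in Section~\ref{ssec:transport}.
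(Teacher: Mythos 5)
Your proof is correct, but it takes a genuinely different route from the paper's. The paper proves this lemma by a \emph{primal} argument: it picks for each pair $(x,y)$ an optimal transport plan $\tilde\pi_{x,y}$ between $\op{L}^*\delta_x$ and $\op{L}^*\delta_y$, invokes a measurable selection theorem to make $(x,y)\mapsto\tilde\pi_{x,y}$ measurable, and then checks by linearity of $\op{L}^*$ that $\tilde\Pi=\int\tilde\pi_{x,y}\dd\Pi(x,y)$ is a coupling of $\op{L}^*\mu$ and $\op{L}^*\nu$, whose cost is exactly the right-hand side. Your proof is the \emph{dual} version: you test against $\omega$-Lipschitz functions and conclude by Kantorovich--Rubinstein duality. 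Note, however, that in the paragraph where the paper records the duality formula it explicitly says ``the inequality $\ge$ is the easy one \dots and is in fact the only one we shall use,'' whereas your final step (passing from the sup over test functions back to $\wass_\omega(\op{L}^*\mu,\op{L}^*\nu)$) requires the \emph{hard} direction $\le$. So your argument quietly relies on a deeper input than the paper wants to invoke. The trade-off is symmetric: the paper's primal proof avoids the full duality theorem but needs a measurable selection of optimal plans (which it does flag as a ``standard selection theorem''); your dual proof avoids measurable selection entirely, since measurability of $(x,y)\mapsto\wass_\omega(\op{L}^*\delta_x,\op{L}^*\delta_y)$ is automatic from the sup-over-$f$ representation, but it imports the Kantorovich--Rubinstein theorem. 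Both are sound; if you want to stay within the paper's declared toolkit, you should use the primal construction.
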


\begin{proof}
For each $x,y\in\Omega$, let $\tilde\pi_{x,y}$ be an optimal plan between $\op{L}^*\delta_x$ and $\op{L}^*\delta_y$. Assume the map $(x,y)\mapsto \tilde\pi_{x,y}$ is measurable (this can be done, by a standard selection theorem, using that, $\Omega$ being compact, the set of optimal plans is always compact). By linearity of $\op{L}^*$ the measure
\[\tilde\Pi = \int \tilde\pi_{x,y} \dd\Pi(x,y)\]
is a coupling of $\op{L}^*\mu$ and $\op{L}^*\nu$, so that
\[
\wass_\omega(\op{L}^*\mu,\op{L}^*\nu) 
  \le \iint \omega\circ d(x_1,y_1) \dd\tilde\pi_{x,y}(x_1,y_1) \dd\Pi(x,y) 
  = \int \wass_\omega(\op{L}^*\delta_x,\op{L}^*\delta_y) \dd\Pi(x,y)
\]
\end{proof}

\subsection{Couplings of a transition kernel}\label{ssec:tight}

One prominent reason why expanding maps are easily amenable to the thermodynamical formalism is that their inverse branches are contracting. When studying non-uniformly expanding maps, one is led to relax this contracting condition on branches, which can be done in several ways. Since we are more generally interested in transition kernels, we will need a suitably defined contraction property for these objects, that should fit a given modulus of continuity $\omega$.

Let us first fix some notation. We are given a transition kernel $\mchain{M}=(m_x)_{x\in\Omega}$ on the compact metric space $\Omega$; given $t\in\mathbb{N}$ and $x\in\Omega$ we denote by $m_x^t$ the measure on $\Omega^t$ which is the law of a Markov chain $(X_1,...,X_t)$ starting at $X_0=x$ and following the transition kernel $\mchain{M}$. In other words, denoting by $\bar x=(x_1,\dots,x_t)$ points of $\Omega^t$, $m_x^t$ is defined by
\[\int_{\Omega^t} f(\bar x) \dd m_x^t(\bar x) = \int\cdots\iint f(\bar x) \dd m_{x_{t-1}}(x_t) \dd m_{x_{t-2}}(x_{t-1}) \cdots \dd m_x(x_1);\]
we also write 
\[(m_x^t)_{x\in \Omega} = \underbrace{(m_x)_{x\in \Omega} \circ (m_x)_{x\in \Omega} \circ \dots \circ (m_x)_{x\in \Omega}}_{t\mbox{ times}} \quad\text{or}\quad \mchain{M}^t = \mchain{M} \circ\dots\circ\mchain{M}.\]
Observe that $m_x^t$ is a probability measure on $\Omega^t$; the measure on $\Omega$ giving the law of the $n$-th element $X_n$ of the Markov chain with transition kernel $\mchain{M}$ started at $X_0=x$ is then $(e_t)_* m_x^t$, where $e_i:\Omega^t\to\Omega$ is the projection to the $i$-th factor (aka evaluation at time $i$) and the $*$ index denotes push forward.
It shall be observed that
\[(e_t)_* m_x^t = \op{L}_{\mchain{M},0}^{*t} \delta_x.\]

Given two probability measures $\bar\mu$, $\bar\nu$ on $\Omega^t$, we denote by $\Gamma(\bar\mu, \bar\nu)$ the set of probability measures $\Pi$ on $\Omega^t \times \Omega^t$ whose marginals are $\bar\mu$ and $\bar\nu$, i.e. such that 
\[p_{1*} \Pi = \mu \quad\mbox{and}\quad p_{2*}\Pi = \nu\]
where $p_1$, $p_2$ are the projection maps $\Omega^t\times\Omega^t \to\Omega^t$ on each factor. Then in particular, denoting by $(e_t,e_t) : \Omega_t\times \Omega_t \to \Omega\times \Omega$ the evaluation at time $t$, $(e_t,e_t)_* \Pi$ is a transport plan between $\mu_t:=(e_t)_*\bar \mu$ and $\nu_t:=(e_t)_*\bar \nu$. By abuse of notation, we will sometimes write $\wass_\omega(\bar\mu,\bar\nu)$ for $\wass_\omega(\mu_t,\nu_t)$.

\begin{defi}
A \emph{coupling} of a transition kernel $\mchain{M}=(m_x)_{x\in\Omega}$ is a family $\mchain{P}$ of probability measures $\Pi^t_{x,y} \in\Gamma(m_x^t,m_y^t)$ indexed by $t\in\mathbb{N}$ and $x,y\in\Omega$, such that for each $t$, the map $(x,y)\mapsto \Pi^t_{x,y}$ is Borel-measurable.

A \emph{coupling at time $i$} of $\mchain{M}$ is defined similarly, restricting to $t=i$. We sometimes call this a \emph{restricted coupling}, and whenever we want to make clear we mean a coupling defined for all times $t$, we use the term \emph{full coupling}.
\end{defi}
In other words, a (full) coupling records a way of pairing trajectories of two Markov chains following the same kernel $\mchain{M}$ but starting at possibly different points $x,y$. It would make sense to ask for a consistency condition, e.g. $(r_s)_* \Pi^t_{x,y} = \Pi^s_{x,y}$ whenever $s<t$, where $r_s:\Omega^t\to\Omega^s$ is the restriction to the first $s$ coordinates. However we will not actually need such a condition.

\begin{defi}\label{defi:decay}
A coupling $\mchain{P}=(\Pi^t_{x,y})_{t,x,y}$ (possibly restricted to a time $i$) of the transition kernel $\mchain{M}=(m_x)_{x\in\Omega}$ is said to be \emph{$\omega$-H\"older} when there is a constant $C$ such that for all $t$ (only $t=i$ in the restricted case) and all $x,y$,
\[ \int \omega \circ d(x_t,y_t) \dd\Pi^t_{x,y}(\bar x,\bar y) \le C \omega\circ d(x,y).\]
We also say that a full coupling is $\omega$-H\"older at step $i$ if its restriction to $t=i$ is $\omega$-H\"older.

A full coupling $\mchain{P}$ is said to have \emph{$\omega$-decay rate} $F$, where $F$ is a decay function, when for all $t$ and all $x,y$ it holds
\[ \int \omega\circ d(x_t,y_t) \dd\Pi^t_{x,y}(\bar x,\bar y) \le F(t,\omega\circ d(x,y)).\]
\end{defi}

\begin{defi}\label{defi:stableDecay}
Given a coupling $\mchain{P}$ and a normalized potential $A:\Omega\to\mathbb{R}$, we define $\mchain{P}_A =\big(e^{A^t(\bar x)} \dd \Pi_{x,y}^t(\bar x, \bar y) \big)_{t,x,y}$ and we extend the previous definition by saying that $\mchain{P}_A$ has \emph{$\omega$-decay rate} $F$ when for some constant $C=C(A)$, for all $t$ and for all $x,y$
\[ \int \omega\circ d(x_t,y_t) e^{A^t(\bar x)} \dd\Pi^t_{x,y}(\bar x,\bar y) \le  C F(t,\omega\circ d(x,y)).\]
$\mchain{P}$ is then said to have \emph{stable $\omega$-decay rate} $F$ when the above holds for all bounded, normalized $A$.
\end{defi}
The role of the constant $C$ is to allow a factor depending on $A$, but the decay rate we will consider (exponential or polynomial) are all defined up to a constant anyway. Observe that since $A$ is assumed to be normalized,
\[\int e^{A^t(\bar x)} \dd\Pi^t_{x,y}(\bar x,\bar y) = \int e^{A^t(\bar x)} \dd m_x^t(\bar x) = 1 \quad \forall t,x,y\]
so that for each $(t,x,y)$, $e^{A^t(\bar x)} \dd \Pi_{x,y}^t(\bar x, \bar y)$ is a probability measure; however Definition   \ref{defi:stableDecay} really is an extension of Definition \ref{defi:decay} since $\mchain{P}_A$ is not a coupling: its first marginal is the Markov chain $\mchain{M}_A= \big(e^{A(x_1)}\dd m_x(x_1)\big)_x$ but its second marginal is different, and might not even be a Markov chain (in $\mchain{P}_A$ the weight in the pairing of a $\bar x$ with a $\bar y$ is given by $A^t(\bar x)$, independently of $\bar y$). A sufficient condition to have stable decay is given in Section \ref{ssec:stableCriterion}.

Let $\mchain{M}=(m_x)_{x\in\Omega}$ be a transition kernel, $\op{L}_0=\op{L}_{\mchain{M},0}$ be the unweighted transfer operator, $\omega$ be a continuity modulus and $F$ be a decay function. Then concavity ensures that checking decay only for Dirac masses suffices to obtain decay for all probability measures.
\begin{lemm}\label{lemm:DiracReduction}
Let $\mchain{P}=(\Pi_{x,y}^t)_{x,y,t}$ be a coupling of $\mchain{M}$, $t\in\mathbb{N}$ and $\mu,\nu\in\proba(\Omega)$. If $\mchain{P}$ is $\omega$-H\"older with constant $C$, then
\[\wass_\omega(\op{L}_0^{*t} \mu , \op{L}_0^{*t} \nu) \le C\wass_\omega(\mu,\nu).\]
If $\mchain{P}$ has $\omega$-decay rate $F$, then
\[ \wass_\omega(\op{L}_0^{*t} \mu , \op{L}_0^{*t} \nu) \le F(t,\wass_\omega(\mu,\nu))\]
(in other words, $\op{L}_0^*$ also has decay rate $F$ in the metric $\wass_\omega$.)
\end{lemm}

\begin{proof}
We prove the second part, the first being the same, only simpler.
Let $\Pi^0$ be an optimal coupling of $\mu$ and $\nu$ for the cost $\omega\circ d$; then 
\[\Pi := \int_{(x,y)\in\Omega\times \Omega} \big( (p_t,p_t)_*\dd\Pi_{x,y}^t \big) \dd\Pi^0(x,y) \in \proba(\Omega\times \Omega)\]
is a coupling of $\op{L}_0^{*t} \mu$ and $\op{L}_0^{*t} \nu$, so that we have
\begin{align*}
\wass_\omega(\op{L}_0^{*t} \mu , \op{L}_0^{*t} \nu)
  &\le \int \omega\circ d(x_t,y_t) \dd\Pi(x_t,y_t) \\
  &= \iint \omega\circ d(x_t,y_t) \dd \Pi_{x,y}^t(\bar x,\bar y) \dd\Pi^0(x,y) \\
  &\le \int  F(t,\omega\circ d(x,y)) \dd\Pi^0(x,y) \\
  &\le F\Big(t,\int \omega\circ d(x,y) \dd\Pi^0(x,y)\Big) \\
  &\le F(t,\wass_\omega(\mu,\nu))
\end{align*}
where in the penultimate line we used that $F(t,\cdot)$ is concave.
\end{proof}

\subsection{Flat potentials}\label{ssec:flat}

Given a transition kernel $\mchain{M}=(m_x)_{x\in\Omega}$, 
finding a coupling  $\mchain{P}=(\Pi^t_{x,y})_{t,x,y}$ with a good decay gives a bound of the form
\[\wass_\omega(m_x^t,m_y^t) \le F(t, \omega\circ d(x,y) )\]
which will translate into a similar control for the unweighted operator $\op{L}_{\mchain{M},0}$ acting on $\C^\omega(\Omega)$. This is very simple to achieve, but to extend this to a weighted operator $\op{L}_A=\op{L}_{\mchain{M},A}$, 
we will need to ask the potential $A$ to have some special quality. 

\begin{defi}
Assume a coupling $\mchain{P}$ is fixed for the transition kernel $\mchain{M}$.
We say that a potential $A\in\C^\omega(\Omega)$ is \emph{flat} (with respect to $\mchain{P}$ and $\omega$) whenever for some constant $C>0$, for all $t\in\mathbb{N}$, all $x,y\in\Omega$ and $\Pi_{x,y}^t$-almost all $(\bar x,\bar y)$ it holds
\[\big\lvert A^t(\bar x) - A^t(\bar y) \big\rvert \le C \omega\circ d(x,y).\]
\end{defi}
The name has been chosen with the case of a map with a neutral point in mind: flatness is related to small variations near the neutral point.

We will see that this condition enables us to prove the Ruelle-Perron-Frobenius theorem for $\op{L}_A$ if the coupling is $\omega$-H\"older at step $1$, and to prove in addition a spectral gap if the coupling has exponential $\omega$-decay.

\begin{lemm}\label{lemm:unibound}
For all flat normalized potential $A\in\C^\omega(\Omega)$, there exist a constant $B>0$ such that for all $t$ and $\Pi_{x,y}^t$-almost all $(\bar x,\bar y)$,
\[e^{A^t(\bar x)-B\omega\circ d(x,y)} \le e^{A^t(\bar y)} \le e^{A^t(\bar x)+B\omega\circ d(x,y)}.\]
In particular, there exist a constant $C>0$ such that 
\[\frac1C e^{A^t(\bar x)} \le e^{A^t(\bar y)} \le C e^{A^t(\bar x)}.\]
\end{lemm}

\begin{proof}
By flatness:
\[e^{A^t(\bar y)} \le e^{\lvert A^t(\bar y)-A^t(\bar x)\rvert} e^{A^t(\bar x)}\le e^{B\omega\circ d(x,y)} e^{A^t(\bar x)} \le \underbrace{e^{B\omega(\diam \Omega)}}_{C} e^{A^t(\bar x)}\]
and similarly $e^{A^t(\bar y)}\ge e^{-B\omega\circ d(x,y)} e^{A^t(\bar x)}$.
\end{proof}

See Section \ref{sec:flatcrit} for criteria ensuring flatness. Basically, flatness will be shown under a (possibly weak) contraction property by choosing suitably two moduli of continuity: one to control the potential, and the second one to control the Birkhoff sums. The weaker the contraction property, the more gap we will have to take between the two moduli.

\section{A Ruelle-Perron-Frobenius theorem}\label{sec:positive}

In this Section, we slightly extend the proof given in \cite{KLS}, itself strongly inspired from Denker-Urbanski \cite{DenkerUrbanski:1991b}, for the existence of a positive eigenfunction for the spectral radius, in as great a generality we could achieve without obscuring the main point. Then we show how to apply this in our present setting.

We consider a Banach algebra $\fspace{X}$ of continuous, real valued  functions on a compact metric space $\Omega$. The norm on $\fspace{X}$ will be denoted by $\lVert\cdot\rVert$ while the uniform norm is $\lVert\cdot\rVert_\infty$, and we assume that $\lVert\cdot\rVert \gtrsim \lVert\cdot\rVert_\infty$ and that $\one\in\fspace{X}$. 

\begin{prop}\label{prop:eigenf}
Assume that the closed balls of $\fspace{X}$ are sequentially compact in the uniform norm, and let $\op{L}:\fspace{X}\to\fspace{X}$ be a linear operator preserving the cone of positive functions, continuous in both norms $\lVert\cdot\rVert$ and $\lVert\cdot\rVert_\infty$, such that for some constant $C>0$:
\begin{enumerate}
\item\label{enumi:eigenf1} for some $x\in\Omega$ and all $n$ it holds $1/C \le \big(\op{L}^n\one(x)\big)^{\frac{1}{n}}\le C$,
\item\label{enumi:eigenf2} for all $x,y\in\Omega$ and all $n\in\mathbb{N}$ it holds
$\op{L}^n \one(x) \le C \op{L}^n \one(y)$,
\item\label{enumi:eigenf3} for all $n$ it holds $\lVert \op{L}^n\one\rVert \le C \rVert \op{L}^n\one\rVert_\infty$.
\end{enumerate}
Let $\rho$ be the spectral radius of $\op{L}$ in the uniform norm, i.e.
\[\rho = \lim_{n\to\infty} \left(\sup_{f\in\fspace{X}\setminus\{0\}} \frac{\lVert \op{L}^nf\rVert_\infty}{\lVert f\rVert_\infty}\right)^{\frac{1}{n}}.\]
Then there exists a positive function $h\in\fspace{X}$ such that
$\op{L}h=\rho h$.
\end{prop}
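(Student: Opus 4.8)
The plan is to obtain $h$ as a limit of normalized iterates $\op{L}^n\one / \op{L}^n\one(x_0)$, exploiting the Denker--Urbanski idea of comparing $\op{L}^n\one$ across basepoints. First I would fix the point $x$ from hypothesis \ref{enumi:eigenf1} and set $\rho_n := (\op{L}^n\one(x))^{1/n}$; by \ref{enumi:eigenf1} these are bounded away from $0$ and $\infty$. The natural candidate functions are $h_n := \op{L}^n\one / \op{L}^n\one(x)$, which are positive, satisfy $h_n(x)=1$, and by \ref{enumi:eigenf2} are uniformly bounded above (by $C$) and below (by $1/C$) in the uniform norm. By \ref{enumi:eigenf3}, $\lVert h_n\rVert \le C\lVert h_n\rVert_\infty \le C^2$, so the $h_n$ lie in a fixed ball of $\fspace{X}$, which by hypothesis is sequentially compact in $\lVert\cdot\rVert_\infty$; extract a subsequence $h_{n_k}\to h$ uniformly, with $h$ positive (bounded below by $1/C$) and $h\in\fspace{X}$ (the ball is closed, or at least its $\lVert\cdot\rVert_\infty$-limit points can be taken in $\fspace{X}$ — one should be a little careful here and use that $\op{L}$ is $\lVert\cdot\rVert_\infty$-continuous to stay inside $\fspace{X}$, or pass to a further Cesàro-type average).

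Next I would identify the eigenvalue. Applying $\op{L}$ to $h_n$ gives $\op{L}h_n = \op{L}^{n+1}\one/\op{L}^n\one(x) = \big(\op{L}^{n+1}\one(x)/\op{L}^n\one(x)\big)\, h_{n+1}$. So the question reduces to controlling the ratios $r_n := \op{L}^{n+1}\one(x)/\op{L}^n\one(x)$. Their geometric means converge to $\rho$ by definition of the spectral radius together with \ref{enumi:eigenf1}--\ref{enumi:eigenf2} (these two force $\lVert\op{L}^n\one\rVert_\infty \asymp \op{L}^n\one(x)$, and then $\lVert\op{L}^n\one\rVert_\infty^{1/n}\to\rho$ because $\op{L}^n\one$ dominates the norm growth up to the factor from \ref{enumi:eigenf3} and positivity). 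The standard trick to upgrade "geometric mean converges" to "the limit function is an eigenfunction" is to average: replace $h_n$ by $H_N := \frac{1}{N}\sum_{n=0}^{N-1} \rho^{-n}\op{L}^n\one \big/ (\text{normalization})$, or equivalently work with $\op{L}/\rho$ and take Cesàro means, so that $(\op{L}/\rho)H_N - H_N \to 0$ in $\lVert\cdot\rVert_\infty$; a uniform limit point $h$ of the $H_N$ (again extracted using ball-compactness and \ref{enumi:eigenf3}) then satisfies $\op{L}h = \rho h$, and positivity survives because all $H_N$ are bounded below by $1/C$ by \ref{enumi:eigenf2}.

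The step I expect to be the main obstacle is precisely this passage from the subexponential control of the ratios $r_n$ to an honest eigenvalue equation: a single uniform limit of $h_n$ need not be an eigenfunction since $r_n/\rho$ oscillates, so one must either run the Cesàro/averaging argument carefully (checking that the averaged functions still lie in a fixed $\fspace{X}$-ball via \ref{enumi:eigenf3} and that their $\lVert\cdot\rVert_\infty$-limit lands in $\fspace{X}$ using continuity of $\op{L}$), or invoke a fixed-point theorem (Schauder--Tychonoff) for the map $f\mapsto \op{L}f/\op{L}f(x)$ on the convex compact set $\{f\in\fspace{X}: f(x)=1,\ 1/C\le f\le C,\ \lVert f\rVert\le C^2\}$ — one checks this set is convex, $\lVert\cdot\rVert_\infty$-compact, and mapped into itself using exactly hypotheses \ref{enumi:eigenf1}--\ref{enumi:eigenf3}, giving a fixed point $h$ with $\op{L}h = \op{L}h(x)\,h$, after which $\op{L}h(x)=\rho$ follows from the spectral radius computation. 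I would most likely present the fixed-point version, as it is cleanest and makes the role of each of the three numbered hypotheses transparent.
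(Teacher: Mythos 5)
Your high-level strategy is the right one and you correctly identify the obstacle (a bare uniform limit of $h_n=\op{L}^n\one/\op{L}^n\one(x)$ is not automatically an eigenfunction because the ratios $r_n$ oscillate). But both of the concrete repairs you propose have real gaps.

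\textbf{The Schauder set is not invariant.} Write $\Phi(f)=\op{L}f/\op{L}f(x)$ and take $f$ in your set, so $1/C\le f\le C$. Positivity only gives $(1/C)\op{L}\one\le\op{L}f\le C\op{L}\one$, whence
\[
\Phi(f)(y)=\frac{\op{L}f(y)}{\op{L}f(x)}\le\frac{C\,\op{L}\one(y)}{(1/C)\,\op{L}\one(x)}\le C^3,
\]
not $C$. Hypotheses \ref{enumi:eigenf1}--\ref{enumi:eigenf3} control $\op{L}^n\one$ and \emph{only} $\op{L}^n\one$; they say nothing about $\op{L}f(x)\lesssim\op{L}f(y)$ for a general $f$ in your set, so the constant degrades under each application of $\Phi$ and the set is not $\Phi$-invariant. (A fixed-point proof can be salvaged by replacing your set with the $\lVert\cdot\rVert_\infty$-closed convex hull of $\{h_n\}_{n\ge 0}$, for which $\Phi$-invariance comes from the relation $\Phi(h_n)=h_{n+1}$ together with the linearity of $\op{L}$; but that is a different set and you would need to supply the continuity/compactness checks for it.)

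\textbf{The Cesàro averages do not converge without selecting a subsequence.} Setting $b_n=\rho^{-n}\op{L}^n\one(x)$ and $S_N=\sum_{n<N}b_n$, the normalized average $H_N=S_N^{-1}\sum_{n<N}\rho^{-n}\op{L}^n\one$ satisfies
\[
\op{L}H_N-\rho H_N=\frac{\rho\big(\rho^{-N}\op{L}^N\one-\one\big)}{S_N},
\]
and by \ref{enumi:eigenf2} the sup norm of the numerator is $O(b_N+1)$. So you need $b_N/S_N\to 0$; this is \emph{not} automatic from $b_n^{1/n}\to 1$ and $b_{n+1}/b_n$ bounded (one can build sawtooth sequences $b_n$ meeting both constraints on which $b_N/S_N$ stays bounded away from $0$ along infinitely many $N$). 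One can still rescue the argument by observing that $b_N/S_N\ge\varepsilon$ for all large $N$ would force $S_N\gtrsim(1+\varepsilon)^N$ and hence $b_N^{1/N}\to 1+\varepsilon>1$, a contradiction — so some subsequence $N_j$ works — but that extra argument is exactly the missing content, and you never state or prove it. Also note that your written formula uses the $1/N$ normalization, for which the boundary term $\frac1N\rho^{-N}\op{L}^N\one$ need not even be bounded.

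The paper avoids both issues by the Denker--Urbanski device: it forms an Abel-type series $\sum_{n\ge1}a_nz^n\op{L}^n\one$ with coefficients $a_n>0$, $a_{n+1}/a_n\to 1$, chosen so that the series still has radius of convergence $1/\rho$ but \emph{diverges} at $z=1/\rho$. Letting $z\uparrow1/\rho$ makes the normalizer $Q(z)\to\infty$, which automatically absorbs the boundary terms without any subsequence selection or invariance issue — this is precisely the technical point your proposal glosses over. Your argument reducing the eigenvalue $\op{L}h(x)$ to $\rho$ once an eigenfunction $h$ bounded above and below is in hand is, however, correct.
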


\begin{proof}
Note that \ref{enumi:eigenf2} implies that \ref{enumi:eigenf1} holds uniformly for all $x$, up to changing the constant $C$.
By \ref{enumi:eigenf1}, the number
\[\tilde\rho := \limsup_{n\to\infty} \big(\op{L}^n\one(x)\big)^{\frac{1}{n}}\]
is finite and non zero, and by \ref{enumi:eigenf2} it does not depend on the choice of $x$. Moreover we have $\tilde\rho\le \rho$. For all $f\in\fspace{X}$ such that $\lVert f\rVert_\infty=1$, it holds $-2<f<2$ and since $\op{L}$ preserves the cone of positive functions,  we get $-2 \op{L}^n\one <\op{L}^n f < 2 \op{L}^n\one$ for all $n$.
In particular $\lVert\op{L}^nf\rVert_\infty^{\frac{1}{n}} <2^{\frac{1}{n}} \rVert\op{L}^n\one\rVert_\infty^{\frac{1}{n}}$, so that $\rho=\tilde\rho$.

We fix $x_0\in\Omega$; the radius of convergence of the series $\sum_{n\ge 1} z^n \op{L}^n\one(x_0)$
is equal to $1/\rho$ (note that we will only consider real values of $z$). As proved in Denker-Urbanski \cite{DenkerUrbanski:1991b} we can chose a positive sequence $(a_n)_{n\in\mathbb{N}}$ such that $a_{n+1}/a_n\to1$ and the modified series
\[\sum_{n\ge 1} a_n z^n \op{L}^n\one(x_0)\]
still has radius of convergence $1/\rho$ and diverges at $z=1/\rho$. We then set
\[Q(z) = \sum_{n\ge 1} a_n z^n \op{L}^n\one (x_0) \quad\mbox{and}\quad h_z(x) = \frac{1}{Q(z)}\sum_{n\ge 1} a_n z^n \op{L}^n\one(x).\]
Assumption \ref{enumi:eigenf3} ensures that for all $z<1/\rho$, we have $h_z\in\fspace{X}$, and together with assumption \ref{enumi:eigenf2} it implies that $\lVert h_z\rVert$ is bounded independently of $z$. Since the closed balls of $\fspace{X}$ are assumed to be sequentially compact in the uniform norm, we can extract a sequence $z_m\to1/\rho$ such that $h_m:=h_{z_m}$ converges in the uniform norm to some $h\in\fspace{X}$. The $h_z$ are uniformly bounded away from $0$, thus $h$ is positive. 

Last we use that $Q(z_m)\to +\infty$ when $m\to\infty$ to deduce that $h$ is the desired eigenfunction.
Given any $\varepsilon>0$, let $N_\varepsilon$ be such that for all $n\ge N_\varepsilon$ we have $|a_{n-1}/a_n-1|\le \varepsilon$. Then for all $x\in\Omega$ we have:
\begin{align*}
\left\lvert\op{L} h(x) - \rho h(x)\right\rvert
  &\le \limsup_{m \to \infty} \lVert \op{L} h_m-\rho h_m\rVert_\infty \\
  &\le \limsup_{m \to \infty} \frac{1}{Q(z_m)} \left\lvert \sum_{n=1}^ {N_\varepsilon-2} a_n z_m^n \op{L}^{n+1}\one(x) - \sum_{n=1}^ {N_\varepsilon-1} a_n z_m^n \rho \op{L}^{n}\one(x) \right\rvert \\
  &\qquad + \limsup_{m\to\infty} \frac{1}{Q(z_m)} \left\lvert\sum_{n\ge N_\varepsilon}  (a_{n-1}z_m^{n-1} - a_n  z_m^{n} \rho) \op{L}^n\one(x) \right\lvert\\
  &\le \limsup_{m\to\infty} \frac{1}{Q(z_m)} \left\lvert\sum_{n\ge N_\varepsilon}  (a_{n-1}z_m^{n-1} - a_n  z_m^{n} \rho) \op{L}^n\one(x) \right\lvert\\
  &\le \limsup_{m \to \infty} \frac{1}{z_mQ(z_m)} \sum_{n\ge N_\varepsilon} \left|\frac{a_{n-1}}{a_n}  - \rho z_m\right| a_n z_m^n \op{L}^n \one (x) \\
  &\le \limsup_{m\to\infty} \frac{\varepsilon+\lvert 1-\rho z_m\rvert}{z_m} \cdot \frac{\sum_{n\ge N_\varepsilon}a_n z_m^n \op{L}^n \one (x)}{\sum_{n\ge 1}a_n z_m^n \op{L}^n \one (x_0)} \\
  &\le \varepsilon\rho C  
\end{align*}
hence, $\op{L}h= \rho h$.
\end{proof}

We tried to make what we really use apparent in the statement of Proposition \ref{prop:eigenf}, but we will only use it in the following particular case.
\begin{prop}\label{prop:eigenf2}
Let $\mchain{M}=(m_x)_{x\in\Omega}$ be a transition kernel on a compact metric space $\Omega$ and let $\omega$ be a modulus of continuity. Assume that we have a coupling $(\Pi^t_{x,y})_{x,y,t}$ which is $\omega$-H\"older at step $1$ and that $A\in\C^\omega(\Omega)$ is flat. Then Proposition \ref{prop:eigenf} applies to $\fX=\C^\omega(\Omega)$ and $\op{L}=\op{L}_{\mchain{M},A}$, in particular there exist a positive number $\rho$ and a positive function $h\in\C^\omega$ such that $\op{L}_{\mchain{M},A} h=\rho h$.
\end{prop}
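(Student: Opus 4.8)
The plan is to verify, one by one, the hypotheses of Proposition \ref{prop:eigenf} for $\fX=\C^\omega(\Omega)$ and $\op{L}=\op{L}_{\mchain{M},A}$, after which Proposition \ref{prop:eigenf} produces the desired $\rho>0$ and positive $h\in\C^\omega$ with $\op{L}_{\mchain{M},A}h=\rho h$.

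\textbf{Structural hypotheses.} Sequential compactness of the closed balls of $\C^\omega(\Omega)$ in the uniform norm is Arzelà--Ascoli: a ball $\{\lVert f\rVert_\omega\le R\}$ consists of functions that are uniformly bounded and, since $\omega$ is continuous with $\omega(0)=0$, uniformly equicontinuous, and $\Omega$ is compact; passing to the limit in $\lvert f_k(x)-f_k(y)\rvert\le \Hol_\omega(f_k)\,\omega\circ d(x,y)\le R\,\omega\circ d(x,y)$ shows the ball is closed under uniform limits. The inequality $\lVert\cdot\rVert_\omega\ge\lVert\cdot\rVert_\infty$ and $\one\in\C^\omega(\Omega)$ are immediate. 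That $\op{L}$ preserves positive functions is clear, and $\lVert\op{L}f\rVert_\infty\le e^{\lVert A\rVert_\infty}\lVert f\rVert_\infty$ gives continuity in $\lVert\cdot\rVert_\infty$. For boundedness $\C^\omega\to\C^\omega$, given $x,x'$ one writes, using $\Pi^1_{x,x'}\in\Gamma(m_x,m_{x'})$,
\[\op{L}f(x)-\op{L}f(x') = \int \big(e^{A(x_1)}f(x_1)-e^{A(y_1)}f(y_1)\big)\dd\Pi^1_{x,x'}(x_1,y_1),\]
splits the integrand as $e^{A(x_1)}(f(x_1)-f(y_1))+f(y_1)(e^{A(x_1)}-e^{A(y_1)})$, uses that $e^A\in\C^\omega$ (algebra property) to bound it by a multiple of $\lVert f\rVert_\omega\,\omega\circ d(x_1,y_1)$, and concludes with the hypothesis that the coupling is $\omega$-H\"older at step $1$; combined with the $\lVert\cdot\rVert_\infty$ bound this gives $\op{L}:\C^\omega\to\C^\omega$ bounded, so in particular $\op{L}^n\one\in\C^\omega$ for all $n$.

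\textbf{The three numbered conditions.} Condition \ref{enumi:eigenf1} is elementary: $e^{-n\lVert A\rVert_\infty}\le\op{L}^n\one(x)\le e^{n\lVert A\rVert_\infty}$ for every $x$. Conditions \ref{enumi:eigenf2} and \ref{enumi:eigenf3} are where flatness enters. The key identity is $\op{L}^n\one(x)=\int e^{A^n(\bar x)}\dd m_x^n(\bar x)$, and for $x,y\in\Omega$ one rewrites this integral against $\Pi^n_{x,y}$, whose second marginal is $m_y^n$. Flatness gives $\lvert A^n(\bar x)-A^n(\bar y)\rvert\le C\,\omega\circ d(x,y)\le C\,\omega(\diam\Omega)$ for $\Pi^n_{x,y}$-almost every $(\bar x,\bar y)$; hence $e^{A^n(\bar x)}\le e^{C\omega(\diam\Omega)}e^{A^n(\bar y)}$ a.s., and integrating yields $\op{L}^n\one(x)\le e^{C\omega(\diam\Omega)}\op{L}^n\one(y)$, which is \ref{enumi:eigenf2}. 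For \ref{enumi:eigenf3}, take $x'$ in place of $y$ and estimate $\lvert e^{A^n(\bar x)}-e^{A^n(\bar y)}\rvert=e^{A^n(\bar y)}\lvert e^{A^n(\bar x)-A^n(\bar y)}-1\rvert\le e^{A^n(\bar y)}e^{C\omega(\diam\Omega)}C\,\omega\circ d(x,x')$ by flatness again; integrating against $\Pi^n_{x,x'}$ and using $\int e^{A^n(\bar y)}\dd\Pi^n_{x,x'}(\bar x,\bar y)=\op{L}^n\one(x')\le\lVert\op{L}^n\one\rVert_\infty$ gives $\Hol_\omega(\op{L}^n\one)\le C'\lVert\op{L}^n\one\rVert_\infty$, i.e. $\lVert\op{L}^n\one\rVert_\omega\le(1+C')\lVert\op{L}^n\one\rVert_\infty$, which is \ref{enumi:eigenf3}.

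\textbf{Main point.} The only non-mechanical aspect, and the reason ``flatness'' is exactly the right hypothesis, is that conditions \ref{enumi:eigenf2} and \ref{enumi:eigenf3} must hold with constants \emph{independent of $n$}: a direct estimate on $A^n$ would contribute a factor growing with $n$, whereas flatness furnishes precisely the $n$-uniform control of the Birkhoff-sum oscillation $\lvert A^n(\bar x)-A^n(\bar y)\rvert$ along coupled trajectories that these two conditions demand. With all hypotheses checked, Proposition \ref{prop:eigenf} applies and gives the conclusion.
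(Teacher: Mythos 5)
Your proposal is correct and follows essentially the same route as the paper: Arzel\`a--Ascoli for sequential compactness, the step-1 H\"older coupling plus the algebra property for boundedness on $\C^\omega$, the trivial sandwich $e^{-n\lVert A\rVert_\infty}\le\op{L}^n\one\le e^{n\lVert A\rVert_\infty}$ for condition \textit{i}, and flatness to control the Birkhoff-sum oscillation uniformly in $n$ for conditions \textit{ii} and \textit{iii}. The only cosmetic difference is that you verify \textit{ii} directly from the pointwise inequality $e^{A^n(\bar x)}\le e^{C\omega(\diam\Omega)}e^{A^n(\bar y)}$ a.s.\ before treating \textit{iii} separately, whereas the paper extracts both from a single estimate of $\lvert\op{L}^n\one(x)-\op{L}^n\one(y)\rvert$; both bookkeepings rest on the same inequality $\lvert e^u-e^v\rvert\le e^v\,e^{\lvert u-v\rvert}\lvert u-v\rvert$ applied to $u=A^n(\bar x)$, $v=A^n(\bar y)$ with $\lvert u-v\rvert$ bounded by flatness.
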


The hypothesis on the coupling is very mild, but for general couplings flat potentials might be rare.

\begin{proof}
It follows from the Arzel\`a-Ascoli theorem that closed balls of $\C^\omega(\Omega)$ are sequentially compact in the uniform norm, and by its definition $\op{L}_{\mchain{M},A}$ preserves the cone of positive functions and acts continuously in the uniform norm; more precisely $\lVert \op{L}_{\mchain{M},A} f\rVert_\infty\le \lVert e^A\rVert_\infty \lVert f\rVert_\infty$.

Since $\C^\omega(\Omega)$ is a Banach algebra (in particular $e^A\in\C^\omega(\Omega)$), using 
\[\int g(x_1) \dd m_x(x_1) = \int g(x_1) \dd\Pi^1_{x,y}(x_1,y_1)\]
we get for all $f\in\C^\omega(\Omega)$ and all $x,y\in\Omega$:
\begin{align*}
\big\lvert \op{L}_{\mchain{M},A} f(x) - \op{L}_{\mchain{M},A} f(y) \big\rvert 
  &= \Big\lvert \int e^{A(x_1)}f(x_1) \dd \Pi^1_{x,y}(x_1,y_1) - \int e^{A(y_1)}f(y_1) \dd \Pi^1_{x,y}(x_1,y_1) \Big\rvert \\
  &\le \int \big\lvert e^{A(x_1)}f(x_1)- e^{A(y_1)}f(y_1) \big\rvert \dd\Pi^1_{x,y}(x_1,y_1) \\
  &\le \Hol_\omega(e^A f) \int \omega\circ d(x_1,y_1) \dd\Pi^1_{x,y}(x_1,y_1) \\
  &\le \lVert e^A\rVert_\omega \lVert f \rVert_\omega C \omega\circ d(x,y)
\end{align*}
It follows that $\Hol_\omega(\op{L}_{\mchain{M},A} f) \le C\lVert e^A\rVert_\omega  \lVert f\rVert_\omega$ and $\op{L}_{\mchain{M},A}$ acts continuously on $\C^\omega(\Omega)$.

We have left to check conditions \ref{enumi:eigenf1}, \ref{enumi:eigenf2} and \ref{enumi:eigenf3} of Proposition \ref{prop:eigenf}. We have 
\[\op{L}_{\mchain{M},A}^n \one(x) = \int e^{A^n(\bar x)} \dd m^n_x(\bar x)\]
and $A^n(\bar x)$ lies between $n\min A$ and $n\max A$, so that
$ e^{\min A} \le \big(\op{L}_{\mchain{M},A}^n\one(x) \big)^{\frac1n} \le e^{\max A} $ for all $x$, proving Condition \ref{enumi:eigenf1}. Using again that $\Pi_{x,y}^t$ has marginals $m_x^t$ and $m_y^t$,  we have for all $x,y$:
\begin{align*}
\big\lvert\op{L}_{\mchain{M},A}^n\one(x)-\op{L}_{\mchain{M},A}^n\one(y) \big\rvert
  &=\Big\lvert \int e^{A^n(\bar x)} \dd\Pi^n_{x,y}(\bar x, \bar y) - \int e^{A^n(\bar y)} \dd\Pi^n_{x,y}(\bar x, \bar y) \Big\rvert \\
  &\le \int \big\lvert e^{A^n(\bar x)} - e^{A^n(\bar y)} \big\rvert \dd\Pi^n_{x,y}(\bar x,\bar y)  \\
  &\le \int e^{A^n(\bar y)} \big\lvert e^{A^n(\bar x)-A^n(\bar y)} -1 \big\rvert \dd\Pi^n_{x,y}(\bar x,\bar y)\\
  &\le C \int e^{A^n(\bar y)} \big\lvert A^n(\bar x)-A^n(\bar y) \big\rvert \dd\Pi^n_{x,y}(\bar x,\bar y)\\
  &\le C' \omega\circ d(x,y) \int e^{A^n(\bar y)} \dd\Pi^n_{x,y}(\bar x,\bar y)
\end{align*}
where the last two lines use flatness of $A$. Since $\mchain{P}$ is a coupling of $\mchain{M}$, it follows
\[ \big\lvert \op{L}_{\mchain{M},A}^n\one(x) - \op{L}_{\mchain{M},A}^n\one(y) \big\rvert \le C' \omega\circ d(x,y) \op{L}_{\mchain{M},A} \one(y),\]
proving \ref{enumi:eigenf2} and \ref{enumi:eigenf3}.
\end{proof}

Proposition \ref{prop:eigenf2} is half of the RPF theorem, and the second half (existence of an eigenprobability) will be a consequence of our contraction results (or can be proved directly). As seen in Section \ref{ssec:transferop}, this half already enables us to normalize the potential $A$ into $\tilde A= A + \log h - \log h\circ T -\log \rho$.
Note that as soon as composition with $T$ preserves $\C^\omega(\Omega)$ (e.g. if $T$ is Lipschitz), the normalized potential $\tilde A$ is still in $\C^\omega(\Omega)$: up to changing $h$ by a constant factor, we can assume $0<h<2$ and then $\log h$ develops as a converging power series of $h$, thus $\log h\in\C^\omega(\Omega)$.

A crucial observation is that if the coupling $\mchain{P}$ is $\omega$-H\"older and $A$ is flat, then $\tilde A$ is also flat. Indeed, for all $t,x,y$ and $\Pi_{x,y}^t$-almost all $(\bar x, \bar y)$, we have
\begin{align*}
\lvert \tilde A^t(\bar x)-\tilde A^t(\bar y) \rvert
  &\le \lvert A^t(\bar x)-A^t(\bar y) \rvert \\
  &\qquad + \Big\lvert \sum_{k=1}^t \log h(x_k)-\log h(T(x_k)) - \log h(y_k)+\log h(T(y_k)) \Big\lvert \\
  &\le  \lvert A^t(\bar x)-A^t(\bar y) \rvert \\
  &\qquad + \Big\lvert \sum_{k=1}^t \log h(x_k)-\log h(x_{k-1}) - \log h(y_k)+\log h(y_{k-1}) \Big\lvert \\
  &\le  \lvert A^t(\bar x)-A^t(\bar y) \rvert + \lvert \log h(x_t) -\log h(x) -\log h(y_t) + \log h(y) \rvert \\
  &\le  \lvert A^t(\bar x)-A^t(\bar y) \rvert+ \Hol_\omega(\log h) \big(\omega\circ d(x_t,y_t) + \omega\circ d(x,y) \big) \\
  &\lesssim \omega\circ d(x,y).
\end{align*}

Let us gather these observations in a statement which reduces the problem to find out whether $\op{L}_{\mchain{M},A}$ has a spectral gap (or other decay properties) to the case of normalized potentials.
\begin{coro}\label{coro:normalization}
Let $T:\Omega\to\Omega$ be a map on a compact metric space $\Omega$, let $\mchain{M}=(m_x)_{x\in\Omega}$ be a backward random walk for $T$ and let $\omega$ be a modulus of continuity such that composition with $T$ preserves $\C^\omega(\Omega)$ (this is satisfied if $T$ Lipschitz).

Assume that we have a coupling $\mchain{P}=(\Pi^t_{x,y})_{t,x,y}$ which is $\omega$-H\"older and that $A\in\C^\omega(\Omega)$ is flat with respect to $\mchain{P}$.

Then there is a \emph{normalized} and \emph{flat} potential $\tilde A\in \C^\omega(\Omega)$ that differs from $A$ by a coboundary and a constant; it follows that $\op{L}_{\mchain{M},A}$ and $\op{L}_{\mchain{M},\tilde A}$ are conjugated up to a constant. In particular, the property to have a spectral gap is equivalent for $\op{L}_{\mchain{M},A}$ and $\op{L}_{\mchain{M},\tilde A}$.
\end{coro}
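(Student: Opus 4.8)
The statement essentially repackages facts established in the discussion preceding it, so the plan is mostly to organize them, the one step with real content being the flatness of $\tilde A$.

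First I would produce the eigenfunction and define $\tilde A$. Since $\mchain{P}$ is $\omega$-H\"older it is in particular $\omega$-H\"older at step $1$, and $A$ is flat, so Proposition \ref{prop:eigenf2} applies to $\fX=\C^\omega(\Omega)$ and $\op{L}=\op{L}_{\mchain{M},A}$ and yields a positive $\rho$ and a positive $h\in\C^\omega(\Omega)$ with $\op{L}_{\mchain{M},A}h=\rho h$. Being positive and continuous on the compact space $\Omega$, $h$ is bounded away from $0$, so after rescaling by a constant we may assume $0<h<2$; then $\log h$ is a uniformly convergent power series in $h$ (its $\C^\omega$ Hölder seminorm being dominated by $\Hol_\omega(h)/(1-\lVert h-1\rVert_\infty)$), hence $\log h\in\C^\omega(\Omega)$, and by hypothesis composition with $T$ preserves $\C^\omega(\Omega)$, so
\[\tilde A := A+\log h-\log h\circ T-\log\rho\]
lies in $\C^\omega(\Omega)$ and differs from $A$ by the coboundary $\log h-\log h\circ T$ and the constant $-\log\rho$.

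Next I would record normalization and conjugacy, which are unconditional. The computation of Section \ref{ssec:transferop} gives $\op{L}_{\mchain{M},\tilde A}f=\tfrac{1}{\rho h}\op{L}_{\mchain{M},A}(hf)$ for all $f$; taking $f=\one$ this reads $\op{L}_{\mchain{M},\tilde A}\one=\one$, so $\tilde A$ is normalized. Writing $M_h$ for multiplication by $h$ — a bounded automorphism of the Banach algebra $\C^\omega(\Omega)$ with bounded inverse $M_{1/h}$, since $h,1/h\in\C^\omega(\Omega)$ — the identity above becomes $\op{L}_{\mchain{M},\tilde A}=\rho^{-1}M_{1/h}\op{L}_{\mchain{M},A}M_h$, i.e. the two operators are conjugate up to the scalar $\rho$. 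To obtain equivalence of the spectral gap I would push the RPF data and the decay estimate through this conjugacy in both directions: $\one$ is the leading eigenfunction of $\op{L}_{\mchain{M},\tilde A}$ (eigenvalue $1$) with dual eigenprobability proportional to $h\dd\nu$ when $\nu$ is a dual eigenmeasure of $\op{L}_{\mchain{M},A}$; the zero-average conditions correspond under $f\mapsto hf$; and from $\op{L}_{\mchain{M},\tilde A}^t g=\rho^{-t}M_{1/h}\op{L}_{\mchain{M},A}^t(hg)$ the estimate $\lVert\op{L}_{\mchain{M},A}^t f\rVert\le C\rho^t(1-\delta)^t\lVert f\rVert$ (for $\nu(f)=0$) transfers to $\op{L}_{\mchain{M},\tilde A}$ with $C$ replaced by $C\lVert M_h\rVert\lVert M_{1/h}\rVert$ and $\rho$ by $1$, and symmetrically in the other direction.

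The one step with real content is the flatness of $\tilde A$, which is exactly the telescoping computation displayed just before the statement. Since $\mchain{M}$ is a backward walk, a $\mchain{P}$-trajectory $(x_1,\dots,x_t)$ issued from $x=x_0$ satisfies $T(x_k)=x_{k-1}$, so $\sum_{k=1}^t\bigl(\log h(x_k)-\log h(T(x_k))\bigr)=\log h(x_t)-\log h(x)$ and likewise for $\bar y$; subtracting and using flatness of $A$ on the remaining term gives, for $\Pi^t_{x,y}$-almost every $(\bar x,\bar y)$,
\[\bigl\lvert\tilde A^t(\bar x)-\tilde A^t(\bar y)\bigr\rvert\le C\,\omega\circ d(x,y)+\Hol_\omega(\log h)\bigl(\omega\circ d(x_t,y_t)+\omega\circ d(x,y)\bigr).\]
The main obstacle is to dominate $\omega\circ d(x_t,y_t)$ by a multiple of $\omega\circ d(x,y)$: flatness is a $\Pi^t_{x,y}$-almost-sure bound whereas the $\omega$-H\"older property of a coupling is phrased as an integral bound, so the clean way to close the argument is to invoke the pointwise closeness of paired trajectory endpoints that the couplings built in Section \ref{sec:chains} enjoy (their inverse branches being non-expanding). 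With that in hand $\tilde A$ is flat, and the corollary follows.
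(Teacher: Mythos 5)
Your proposal follows essentially the same route as the paper: invoke Proposition~\ref{prop:eigenf2} to get the eigenpair $(\rho,h)$, rescale $h$ so $\log h$ is defined by its convergent power series and lies in $\C^\omega(\Omega)$, read off normalization and conjugacy from $\op{L}_{\mchain{M},\tilde A}=\rho^{-1}M_{1/h}\op{L}_{\mchain{M},A}M_h$, and prove flatness of $\tilde A$ by the telescoping identity $\sum_k\bigl(\log h(x_k)-\log h(x_{k-1})\bigr)=\log h(x_t)-\log h(x)$ (valid because $\mchain{M}$ is a backward walk, so $T(x_k)=x_{k-1}$). This matches the paper's argument word for word.

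The one point of real interest in your write-up is the observation at the end, and it is a genuine one. After telescoping you are left with the term $\Hol_\omega(\log h)\,\omega\circ d(x_t,y_t)$, which has to be dominated by a constant times $\omega\circ d(x,y)$ \emph{for $\Pi^t_{x,y}$-almost every} $(\bar x,\bar y)$, since flatness is an almost-sure (not an averaged) bound. The stated hypothesis that $\mchain{P}$ is $\omega$-H\"older only controls $\int\omega\circ d(x_t,y_t)\dd\Pi^t_{x,y}$, and an integral bound does not yield the pointwise one (nor does the trivial bound $\lvert\log h(x_t)-\log h(y_t)\rvert\le 2\lVert\log h\rVert_\infty$, since that is $O(1)$ rather than $O(\omega\circ d(x,y))$). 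The paper's displayed computation preceding the corollary simply writes ``$\lesssim\omega\circ d(x,y)$'' at this point, which tacitly assumes the pointwise inequality; you are right that as stated this does not literally follow from the hypotheses of Corollary~\ref{coro:normalization}. Your suggested repair — to use the pointwise contraction $d(x_t,y_t)\le d(x,y)$ enjoyed by the natural couplings of weakly contracting $1$-to-$k$ kernels (Section~\ref{sec:chains}), where $c(r)<r$ along paired branches — is exactly what makes the argument close in every case where the corollary is actually invoked, and one could equivalently strengthen the hypothesis of the corollary to require a pointwise rather than integrated $\omega$-H\"older bound on the coupling. So: same approach as the paper, plus a correct identification of an implicit hypothesis that the paper leaves unstated.
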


\section{The main contraction result}\label{sec:maincontraction}

Our core result is the following.
\begin{theo}\label{theo:core}
Let $\mchain{M}$ be a transition kernel on a compact metric space $\Omega$, and let $\omega$ be a modulus of continuity. Let $A\in\C^\omega(\Omega)$ be a flat, normalized potential and set $\op{L}=\op{L}_{\mchain{M},A}$. 

Assume $\mchain{M}$ admits a coupling $\mchain{P}$ such that $\mchain{P}_A$ has $\omega$-decay with decay function $F$ and corresponding half-life $\tau=\tau_\frac12:(0,+\infty)\to \mathbb{N}$.

Then there exist constants $C>0$ and $k\in\mathbb{N}$ such that for all $\mu,\nu\in\proba(\Omega)$ with $\wass_\omega(\mu,\nu)=:r$ it holds
\[\wass_\omega\big(\op{L}^{*k\tau(r/k)} \mu ,\op{L}^{*k\tau(r/k)} \nu \big) \le \frac12 \wass_\omega(\mu,\nu)\]
and 
\[\wass_\omega\big(\op{L}^{*t} \mu ,\op{L}^{*t} \nu \big) \le C \wass_\omega(\mu,\nu) \quad\forall t\in\mathbb{N}.\]
In particular:
\begin{itemize}
\item if $F$ is exponential then $\tau(r)$ is bounded, and so is $k\tau(r/k)$, so that $\op{L}_{\mchain{M},A}^*$ decays exponentially in the metric $\wass_\omega$, and $\op{L}_{\mchain{M},A}$ has a spectral gap on $\C^\omega(\Omega)$.
\item if $F$ is polynomial then $\tau(r)\le D/r^\alpha$ so that $k\tau(r/k)\le D'/r^\alpha$ and $\op{L}_{\mchain{M},A}^*$ decays polynomially, with the same degree.
\end{itemize}
\end{theo}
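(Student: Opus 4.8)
The plan is to push everything through the \emph{unweighted} transfer operator of the normalized kernel and then feed the result into Lemmas~\ref{lemm:expdec}, \ref{lemm:poldec}, \ref{lemm:linearity}, \ref{lemm:DiracReduction}, \ref{lemm:unibound} and Proposition~\ref{prop:WassDecay}. Since $A$ is normalized, $\mchain M_A:=(e^{A}\dd m_x)_{x\in\Omega}$ is itself a transition kernel and $\op{L}=\op{L}_{\mchain M,A}=\op{L}_{\mchain M_A,0}$; hence $\op{L}^{*t}\delta_x=(e_t)_*\hat m^t_x$ with $\hat m^t_x:=e^{A^t(\bar x)}\dd m^t_x$ a \emph{probability} on $\Omega^t$. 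So I am really asking whether the backward walk $\mchain M_A$ admits a coupling with good $\omega$-decay, and the task is to build one out of the given coupling $\mchain P$ of $\mchain M$, flatness being what lets me control the ``weight'' $e^{A^t}$. As in Lemma~\ref{lemm:DiracReduction}, a decay bound for Dirac masses integrates against an optimal plan (using concavity of the decay functions) to a bound for arbitrary $\mu,\nu$, so I concentrate on $\op{L}^{*t}\delta_x$ versus $\op{L}^{*t}\delta_y$.

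First I build the ``bulk'' coupling. Put $\Pi'_{x,y}:=\min\!\big(e^{A^t(\bar x)},e^{A^t(\bar y)}\big)\,\Pi^t_{x,y}$; this is a sub-coupling of $\hat m^t_x$ and $\hat m^t_y$ of total mass $1-\eta_t$, where $\eta_t=\tfrac12\int\lvert e^{A^t(\bar x)}-e^{A^t(\bar y)}\rvert\dd\Pi^t_{x,y}$. Flatness together with the upper bound on $A^t$ from Lemma~\ref{lemm:unibound} gives $\lvert e^{A^t(\bar x)}-e^{A^t(\bar y)}\rvert\lesssim\omega\circ d(x,y)$ on the support of $\Pi^t_{x,y}$, so $\eta_t\lesssim\omega\circ d(x,y)$; while $\int\omega\circ d(x_t,y_t)\dd\Pi'_{x,y}\lesssim F(t,\omega\circ d(x,y))$ because $\Pi'_{x,y}\le e^{A^t(\bar x)}\Pi^t_{x,y}$, $e^{A^t}$ is bounded, and $\mchain P$ is $\omega$-decaying with rate $F$. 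Pushing forward by $e_t$, $\op{L}^{*t}\delta_x$ splits into a bulk part (mass $1-\eta_t$) coupled to the bulk part of $\op{L}^{*t}\delta_y$ with cost $\lesssim F(t,\omega\circ d(x,y))$, plus an ``orphan'' remainder of mass $\eta_t\lesssim\omega\circ d(x,y)$. Completing $\Pi'_{x,y}$ by joining the orphans in any measurable way (cost $\le\eta_t\,\omega(\diam\Omega)$) already gives $\wass_\omega(\op{L}^{*t}\delta_x,\op{L}^{*t}\delta_y)\lesssim F(t,\omega\circ d(x,y))+\omega\circ d(x,y)\le C\,\omega\circ d(x,y)$ with $C$ independent of $t$; integrating against an optimal plan (Lemma~\ref{lemm:linearity}, concavity) yields $\wass_\omega(\op{L}^{*t}\mu,\op{L}^{*t}\nu)\le C\,\wass_\omega(\mu,\nu)$ for all $t$, which is the second displayed inequality and the only bound I will need on the orphans' later evolution.

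For the halving I write $N=t+s$ and apply $\op{L}^{*s}$ to this splitting, transporting bulk onto bulk and orphan onto orphan (the masses agree); by Lemma~\ref{lemm:linearity} applied to each piece and the crude bound above,
\[\wass_\omega\!\big(\op{L}^{*N}\delta_x,\op{L}^{*N}\delta_y\big)\;\lesssim\;F\big(t,\omega\circ d(x,y)\big)\;+\;\eta_t\cdot\sup_{u,v\in\Omega}\wass_\omega\!\big(\op{L}^{*s}\delta_u,\op{L}^{*s}\delta_v\big).\]
The first term is $\le\tfrac14\,\omega\circ d(x,y)$ once $t\ge\hl_\theta(\omega\circ d(x,y))$ for a fixed small $\theta$, and $\hl_\theta$ is dominated by finitely many half-lives $\tau$ at halved scales. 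For the second term I must show that $\Phi(s):=\sup_{u,v}\wass_\omega(\op{L}^{*s}\delta_u,\op{L}^{*s}\delta_v)\to0$ as $s\to\infty$: taking $x,y$ at maximal distance in the display above turns it into a self-improving recursion $\Phi(t+s)\lesssim F(t,\omega(\diam\Omega))+c\,\Phi(s)$, from which, with the uniform bound $\Phi\le C$ and $F(t,\cdot)\to0$, one reads off $\Phi(s)\to0$, so $\eta_t\,\Phi(s)\le\tfrac14\,\omega\circ d(x,y)$ once $s\ge s_0$ for a fixed $s_0$. Hence $\wass_\omega(\op{L}^{*N}\delta_x,\op{L}^{*N}\delta_y)\le\tfrac12\,\omega\circ d(x,y)$ whenever $N\ge\hl_\theta(\omega\circ d(x,y))+s_0$; since $k\,\tau(r/k)\ge\hl_\theta(r)+s_0$ for all $r$ once $k$ is a large enough \emph{fixed} integer (both when $\tau$ is bounded and when $\tau(r)\le Dr^{-\alpha}$), the choice $N=k\,\tau(r/k)$ works, and integrating against an optimal plan gives the first displayed inequality. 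The two bullets then follow by iterating the halving against the uniform Lipschitz bound: if $\mchain P$ has exponential $\omega$-decay then $\tau$ is bounded (Lemma~\ref{lemm:expdec}), so $k\,\tau(r/k)$ is bounded, $\op{L}^*$ decays exponentially in $\wass_\omega$ (criterion~\ref{enumi:expdec2} of Lemma~\ref{lemm:expdec}), and Proposition~\ref{prop:WassDecay} converts this into a spectral gap for $\op{L}$ on $\C^\omega(\Omega)$; if $\mchain P$ has polynomial $\omega$-decay then $\tau(r)\le Dr^{-\alpha}$ gives $k\,\tau(r/k)\le D'r^{-\alpha}$ (criterion~\ref{enumi:poldec2} of Lemma~\ref{lemm:poldec}), so $\op{L}^*$ decays polynomially of the same degree and Proposition~\ref{prop:WassDecay} transfers this to $\op{L}$.

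The main obstacle is the orphan term in the penultimate display. Its mass $\eta_t$ is only of order $\omega\circ d(x,y)$ — not of the smaller order $F(t,\omega\circ d(x,y))$ — so it cannot be discarded at a single time; one has to let the orphans run for a further long time $s$ and use that $\op{L}^*$ contracts in $\wass_\omega$, i.e. essentially the very statement being proved. Making this non-circular — extracting $\Phi(s)\to0$ by bootstrapping the crude Lipschitz estimate into itself, and checking that the constants close — is the technical heart of the argument, and it is precisely the competition between ``enough time for the bulk to contract'' (a few half-lives $\tau$) and ``enough extra time for the orphans to relax'' ($s_0$, a fixed amount) that dictates the time scale $k\,\tau(r/k)$; it is also the step where the flatness of $A$, through the uniform control of Birkhoff sums in Lemma~\ref{lemm:unibound}, does its essential work.
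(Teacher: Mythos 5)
Your proposal starts from the same bulk/orphan decomposition as the paper — $\Pi'_{x,y}=\min(e^{A^t(\bar x)},e^{A^t(\bar y)})\,\Pi^t_{x,y}$ plus a remainder — but then diverges. The paper's proof introduces a modified modulus $\omega' = \min(K\omega, \omega(\diam\Omega))$ chosen so that the two regimes for the orphan mass (of order $\omega\circ d(x,y)$ for close pairs, and of order $1-e^{-B}$ for distant ones) merge into the single inequality $\int\omega'\circ d(x_t,y_t)\dd\Pi\le DKF(t,\omega'\circ d(x,y))+(1-e^{-B})\omega'\circ d(x,y)$, yielding a genuine $\theta_1<1$ contraction at a \emph{single} time of order $k_1\tau(\cdot)$. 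You instead keep the original modulus, split the time as $N=t+s$, pair bulk to bulk and orphan to orphan under $\op{L}^{*s}$, and appeal to a bootstrap to show the orphans relax. This is a legitimate and arguably more transparent alternative, but it transfers the delicate part of the argument from the choice of $\omega'$ to the claim $\Phi(s)\to0$.

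And that is where there is a gap. The recursion $\Phi(t+s)\lesssim F(t,\omega(\diam\Omega))+c\,\Phi(s)$ produces $\Phi\to0$ only if $c := \sup_{x,y,t}\eta_t < 1$. The bound you actually establish, $\eta_t\lesssim\omega\circ d(x,y)$ via Lemma~\ref{lemm:unibound}, carries an implicit constant depending on the flatness constant and $\sup A^t$, and its supremum over $x,y$ is some $C\omega(\diam\Omega)$ for which there is no reason to expect $<1$; the trivial $\eta_t\le 1$ is useless in the recursion. What is needed is the \emph{multiplicative} consequence of flatness that the paper's Step 3 isolates: $\min(e^{A^t(\bar x)},e^{A^t(\bar y)})\ge e^{A^t(\bar x)}e^{-B\omega\circ d(x,y)}$, so the bulk mass is $\ge e^{-B\omega\circ d(x,y)}$ and hence $\eta_t\le 1-e^{-B\omega(\diam\Omega)}<1$ uniformly. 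With that inequality the $\limsup$ argument closes ($M\le cM$ after letting $t\to\infty$, so $M=0$) and the rest of your proof goes through; without it, the step ``one reads off $\Phi(s)\to0$'' is unsupported. You correctly flagged this as the technical heart of the argument, but the one estimate that makes the recursion self-improving is the one you did not write down.
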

It follows from this result that  as soon as  $\mchain{P}_A$ decays to $0$, whatever the speed, $\op{L}_{\mchain{M},A}^*$ fixes a unique probability measure $\mu_A$, which thus must be the RPF measure of $A$ (so we get the second half of the RPF theorem for flat potentials).

In the set-up of our main results, there is only one reasonable candidate for the coupling $\mchain{P}$, and to apply Theorem \ref{theo:core} our main task will be to identify sufficient conditions on potentials ensuring flatness.

\begin{proof}
The proof mostly follows the strategy of \cite{KLS}, the extra generality coming mostly from the use of decay times and moduli of continuity.

\begin{step} construct a transport plan between $\op{L}^{*t}\delta_x$ and $\op{L}^{*t}\delta_y$.\end{step}
Here we need the normalization assumption, to ensure these two measures are both of the same mass.
Fix $t\in\mathbb{N}$, $x,y\in \Omega$ and observe that $\op{L}^{*t}\delta_x = (e_t)_*\big( e^{A^t} \dd m_x^t\big)$ where $e_t:\Omega^t\to \Omega$ is the projection to the last coordinates. We seek an efficient transport plan between  $\op{L}^{*t}\delta_x$ and $\op{L}^{*t}\delta_y$, and we will construct it as $(e_t,e_t)_{*}\Pi$ where $\Pi$ is a transport plan between $e^{A^t} \dd m_x^t$ and $e^{A^t} \dd m_y^t$. What we are given by the coupling $\mchain{P}$ is a transport plan $\Pi_{x,y}^t$ between $m_x^t$ and $m_y^t$, and we will modify it to take into account the $e^{A^t}$ factors. Define a function
\begin{align*}
a : \Omega^t\times \Omega^t &\to \mathbb{R} \\
  (\bar x,\bar y) &\mapsto \min \big( e^{A^t(\bar x)} , e^{A^t(\bar y)} \big)
\end{align*}
so that $a \dd \Pi_{x,y}^t$ is a positive measure whose marginals are less than $e^{A^t} \dd m_x^t$ and $e^{A^t} \dd m_y^t$, respectively. There must thus exist some positive measure $\Lambda$ on $\Omega^t\times \Omega^t$ such that
\[\Pi := a \dd \Pi_{x,y}^t + \Lambda\]
is a probability measure with marginals exactly $e^{A^t} \dd m_x^t$ and $e^{A^t} \dd m_y^t$. We want to bound above the $\omega$-cost of $\Pi$; the basic idea is that the first term will be small by the decay hypothesis, the second one will be small because $\Lambda$ has small mass.

\begin{step}Bound from above the mass of $\Lambda$.\end{step}
By Lemma \ref{lemm:unibound}, for $\Pi_{x,y}^t$ almost all $(\bar x,\bar y)$ and for some constant $B$,
\[a(\bar x,\bar y) \ge e^{A^t(\bar x)} e^{-B \omega\circ d(x,y)};\]
using that $A$ is normalized it comes that the total mass of $a\dd\Pi_{x,y}^t$ is at least $e^{-B\omega\circ d(x,y)}$. Since $\Omega$ has finite diameter, up to enlarging $B$ this total mass can bounded from below both by a constant $e^{-B}\in (0,1)$ and by $1-B\omega\circ d(x,y)$.
The total mass of $\Lambda$ is therefore bounded above as follows: 
\[ \int \one \dd \Lambda \le \min\big( B\omega\circ d(x,y) , 1-e^{-B}\big).\]

\begin{step}Bound the cost of $\Pi$ for a modified metric.\end{step}
We introduce a new modulus of continuity 
\[\omega' = \min \big( K\omega, \omega(\diam\Omega) \big)\]
where $K$ is a positive constant to be specified later on (independently of $x,y$). We have $\omega'\circ d(x,y) \ge \omega\circ d(x,y)$ for all $x,y\in \Omega$ and $\omega'\le K\omega$, so that $\omega\circ d$ and $\omega'\circ d$ are Lipschitz-equivalent metrics on $\Omega$, and as a consequence $\wass_\omega$ and $\wass_{\omega'}$ are Lipschitz-equivalent (with the same constants).

We decompose the cost as
\[\int \omega'\circ d(x_t,y_t)  \dd\Pi(\bar x,\bar y)
  = \int \omega'\circ d(x_t,y_t) a(\bar x,\bar y) \dd\Pi_{x,y}^t(\bar x,\bar y) + \int \omega'\circ d(x_t,y_t) \dd\Lambda(\bar x,\bar y).\]
For the first term we get from $\omega$-decay of $\mchain{P}_A$ (with $D$ only depending on $A$):
\begin{align*}
\int \omega'\circ d(x_t,y_t) a(\bar x,\bar y) \dd\Pi_{x,y}^t(\bar x,\bar y)
  &\le K \int \omega\circ d(x_t,y_t) e^{A^t(\bar x)} \dd\Pi_{x,y}^t(\bar x,\bar y) \\
  &\le DK\cdot F(t,\omega \circ d(x,y)) \\
  &\le DK\cdot F(t,\omega' \circ d(x,y)).
\end{align*}
For the second term, we distinguish two cases. If $\omega\circ d(x,y)\ge \omega(\diam \Omega)/K$, then $\omega'\circ d(x,y)= \omega(\diam \Omega) = \omega'(\diam \Omega)$ and we bound the mass of $\Lambda$ by $1-e^{-B}$, so that
\begin{align*}
\int \omega'\circ d(x_t,y_t) \dd\Lambda(\bar x,\bar y) 
  &\le (1-e^{-B}) \omega'(\diam \Omega) \\
  &\le (1-e^{-B})\omega'\circ d(x,y).
\end{align*}
If $\omega\circ d(x,y)\le \omega(\diam \Omega)/K$, then $\omega'\circ d(x,y)=K\omega\circ d(x,y)$ and we bound the mass of $\Lambda$ by $B\omega\circ d(x,y)$:
\begin{align*}
\int \omega'\circ d(x_t,y_t) \dd\Lambda(\bar x,\bar y)
  &\le B\omega\circ d(x,y) \omega'(\diam \Omega) \\
  &\le \frac{B\omega(\diam \Omega)}{K} \omega'\circ d(x,y).
\end{align*}

Choosing $K$ large enough to ensure $\frac{B\omega(\diam \Omega)}{K}\le 1-e^{-B}$, we get in both cases
\begin{equation}
\int \omega'\circ d(x_t,y_t) \dd\Pi(\bar x,\bar y) \le DK\cdot F(t,\omega' \circ d(x,y)) + (1-e^{-B}) \omega'(d(x,y)).
\label{eq:step4}
\end{equation}

\begin{step}\label{step:eqn2} $\wass_\omega\big(\op{L}^{*t} \mu ,\op{L}^{*t} \nu \big) \le C \wass_\omega(\mu,\nu) \quad\forall t\in\mathbb{N},\ \forall \mu,\nu\in\proba(\Omega).$ \end{step}

Since $F(t,r)\lesssim r$, the previous step implies in particular that $\Pi$, as a restricted coupling at time $t$, is $\omega'$-H\"older; but $\omega\le \omega'\le K\omega$ on $[0,\diam\Omega]$ so that $\Pi$ is also $\omega$-H\"older. Then the claim follows from Lemma \ref{lemm:DiracReduction}.

\begin{step}There exist $\theta_1\in(0,1)$ and $k_1\in\mathbb{N}$ such that for all $r$, all $x,y\in\Omega$ such that $\omega'\circ d(x,y)\ge r$ and all $t\ge k_1\tau(r/2^{k_1})$, 
\[\wass_{\omega'}(\op{L}^{*t}\delta_x,\op{L}^{*t}\delta_y) \le \theta_1 \omega'\circ d(x,y).\] \end{step}
We choose any $\theta_1 \in(1-e^{-B},1)$ and $k_1$ large enough to ensure $DK/2^{k_1} + (1-e^{-B})\le \theta_1$, and then apply \eqref{eq:step4} (note that $k_1\tau(r/2^{k_1})\ge \tau(r)+\tau(r/2)+\dots +\tau(r/2^{k_1})$).


\begin{step}There exist $\theta\in(0,1)$ and $k_2\in\mathbb{N}$ such that for all $r$, all $\mu,\nu\in\proba(\Omega)$ such that $\wass_{\omega'}(\mu,\nu) = r$ and all $t\ge k_2\tau(r/k_2)$,
\[\wass_{\omega'}(\op{L}^{*t}\mu,\op{L}^{*t}\nu) \le \theta \wass_{\omega'}(\mu,\nu).\] \end{step}

Choose any $\theta \in (\theta_1,1)$ and let $\eta>0$ be small enough to ensure $\theta_1+C\eta\le \theta$, where $C$ is the constant of Step \ref{step:eqn2}.
Let $\mu,\nu$ be any two probability measure and let $\Pi\in\Gamma(\mu,\nu)$ be optimal for $\wass_{\omega'}(\mu,\nu)=:r$. Define $s:=\eta r$ and $E:=\{(x,y) \mid \omega'\circ d(x,y) \ge s \}$. For all $t\ge k_1 \tau(s/2^{k_1})$, using Lemma \ref{lemm:linearity} it comes
\begin{align*}
\wass_{\omega'}(\op{L}^{*t}\mu,\op{L}^{*t}\nu )
  &\le \int \wass_{\omega'}(\op{L}^{*t}\delta_x,\op{L}^{*t}\delta_y ) \dd\Pi(x,y) \\
  &\le \int_E \wass_{\omega'}(\op{L}^{*t}\delta_x,\op{L}^{*t}\delta_y ) \dd\Pi(x,y) + \int_{\Omega\times\Omega\setminus E} \mkern-36mu \wass_{\omega'}(\op{L}^{*t}\delta_x,\op{L}^{*t}\delta_y ) \dd\Pi(x,y) \\
  &\le \theta_1 \int_E \omega'\circ d(x,y) \dd\Pi(x,y) + C \int_{\Omega\times\Omega\setminus E} \mkern-36mu \omega'\circ d(x,y) \dd\Pi(x,y) \\
  &\le \theta_1 \wass_{\omega'}(\mu,\nu) + C \eta r \\
  &\le \theta \wass_{\omega'}(\mu,\nu).
\end{align*}
It suffices to choose $k_2\ge 2^{k_1}/\eta$. 

\begin{step}conclude.\end{step}
We deduce that the $\theta$ decay time $\tau^{\omega'}_\theta(r)$ of $\op{L}^*$ with respect to $\wass_{\omega'}$ is at most $k_2\tau(r/k_2)$.

Then for all $n\in\mathbb{N}$,
\[\tau^{\omega'}_{\theta^n}(r) \le k_2\tau(r/k_2)+k_2\tau(\theta r/k_2) + \dots + k\tau(\theta^{n-1} r/k_2) \]
and taking $n$ large enough to ensure $\theta^n\le \frac{1}{2K}$ we get
\[\tau^{\omega'}_{\frac{1}{2K}}(r) \le k_2 n \tau(\theta^{n-1} r/k_2) \le k \tau(r/k) \quad \text{for some }k.\]
Now since $\wass_\omega\le \wass_{\omega'} \le K\wass_\omega$, the decay time for $\op{L}^*$ with respect to $\wass_\omega$ satisfies $\tau^\omega_{\frac12} \le \tau^{\omega'}_{\frac{1}{2K}}$, and we are done.
\end{proof}

\section{Weakly contracting $1$-to-$k$ transition kernels}\label{sec:chains}

This Section develops the tools we  will need next to apply our general results of Sections \ref{sec:positive} and \ref{sec:maincontraction} to the cases considered in the Introduction. We introduce a class of transition kernels that appear naturally as backward random walks for the maps of Theorems \ref{theo:mainHol}-\ref{theo:mainpol}; we construct natural couplings for this class of transition kernels, and give flatness criteria with respect to these natural couplings and moduli of continuity $\omega_{\alpha+\beta\log}$; to be able to apply these criteria we study the rate of decay of $c^n(r)$ as $n\to\infty$ for relevant families of weak contractions $c:[0,+\infty) \to[0,+\infty)$; and finally we state a half-general result encompassing the results of Sections \ref{sec:positive} and \ref{sec:maincontraction} in a way that is easy to reuse.

\subsection{A criterion for stable $\omega$-decay}\label{ssec:stableCriterion}

We start by giving a sufficient condition for certain couplings to have stable $\omega$-decay rate, ensuring applicability of Theorem \ref{theo:core} without restriction on $A$. We assume that $\mchain{P}$ is itself Markovian, i.e. is given by $(\Pi_{x,y}^t)_{x,y} = (\pi_{x,y})_{x,y} \circ \dots \circ (\pi_{x,y})_{x,y}$ for some $\pi_{x,y}\in\Gamma(m_x,m_y)$.

\begin{defi}
We say that a Markov transition kernel $(\pi_{x,y})_{x,y}$ on $\Omega^2$ is \emph{non-dilating and contracting with positive probability} when there exist $\lambda,p\in(0,1)$ such that for all $x,y\in\Omega$:
\begin{itemize}
\item for $\pi_{x,y}$-almost all $(x_1,y_1)$, $d(x_1,y_1)\le d(x,y)$,
\item there exist a set $E_{x,y}\subset\Omega^2$ such that for all $(x_1,y_1)\in E$, $d(x_1,y_1)\le \lambda d(x,y)$ and $\pi_{x,y}(E_{x,y})\ge p$,
\end{itemize}
\end{defi}

\begin{lemm}
Let $(\pi_{x,y})_{x,y}$ be a Markovian coupling of $\mchain{M}$ and $A$ be a bounded potential normalized with respect to $\mchain{M}$. If $(\pi_{x,y})_{x,y}$ is non-dilating and contracting with positive probability,
then $(e^{A(x_1)}\dd\pi_{x,y}(x_1,y_1))_{x,y}$ is non-dilating and contracting with positive probability.
\end{lemm}

\begin{proof}
Both points are essentially obvious: $e^{A(x_1)}\dd\pi_{x,y}(x_1,y_1)$ is absolutely continuous with respect to $\pi_{x,y}$ and $\int_{E_{x,y}} e^{A(x_1)}\dd\pi_{x,y}\ge p \min(e^A)>0$. For the claim to make sense though, one has to observe that $(e^{A}\pi_{x,y})_{x,y}$ is a Markov transition kernel, which follows from the normalization hypothesis:
\[\int \one e^{A(x_1)} \dd \pi_{x,y}(x_1,y_1) = \int \one e^{A(x_1)} \dd m_x(x_1)  =\one.\]
\end{proof}

\begin{lemm}
If $(\pi_{x,y})_{x,y}$ is a Markov transition kernel on $\Omega^2$ which is non-dilating and contracting with positive probability, then for some $\lambda\in (0,1)$, all $(x,y)\in\Omega^2$ and all $t\in\mathbb{N}$,
\[\int d(x_t,y_t) \dd \pi_{x,y}^t(\bar x,\bar y) \le \lambda^t d(x,y)\] 
\end{lemm}

Here we denoted by $\pi_{x,y}^t$ the iterates of the Markov transition kernel, so that when $(\pi_{x,y})_{x,y}$ comes from the coupling $\mchain{P}$, $\pi_{x,y}^t=\Pi_{x,y}^t$.

\begin{proof}
It suffices to consider the case $t=1$, then conclude by induction. But this follows from the definition: denoting by $\lambda_1$ the contraction factor of $\pi_{x,y}$ on $E_{x,y}$,
\begin{align*}
\int d(x_1,y_1) \dd \pi_{x,y}(x_1,y_1) &= \int_{E_{x,y}} d(x_1,y_1) \dd \pi_{x,y}(x_1,y_1) + \int_{\Omega^2\setminus E_{x,y}}  d(x_1,y_1) \dd \pi_{x,y}(x_1,y_1) \\
  &\le p\lambda_1 d(x,y) + (1-p) d(x,y)
\end{align*}
then we take $\lambda= p\lambda_1 +  (1-p)$.
\end{proof}

\begin{coro}\label{coro:markovian}
Assume $\mchain{P}$ is Markovian with a transition kernel that is non-dilating and contracting with positive probability. Then for all bounded normalized potentials $A$, $\mchain{P}_A$ has exponential decay with respect to $\omega_{\alpha+\beta\log}$ for all $\alpha\in(0,1)$ and all $\beta\in \mathbb{R}$; and $\mchain{P}_A$ has polynomial decay of degree $\beta$ with respect to $\omega_{\beta\log}$ for all $\beta>0$. 
\end{coro}

\begin{proof}
Let $A$ be a bounded, normalized potential. Then for all modulus of continuity $\omega$, some $\lambda\in(0,1)$, and all $x,y,t$:
\begin{align*}
\int \omega\circ d(x_t,y_t) \dd\big(e^{A^t}\Pi_{x,y}^t\big)(\bar x,\bar y) 
  &\le \omega \Big( \int d(x_t,y_t) \dd\big(e^{A^t}\Pi_{x,y}^t\big)(\bar x,\bar y) \Big) \\
  &\le \omega(\lambda^t d(x,y)).
\end{align*}
There is only left to observe that for all $\alpha'<\alpha$, $\omega_{\alpha+\beta\log}(\lambda^t r) \lesssim \lambda^{\alpha't} \omega_{\alpha+\beta\log}(r)$ and that 
\[\omega_{\beta\log}(\lambda^t r) = \frac{\omega_{\beta\log}(r)}{\big(1+ t\cdot \omega_{\beta\log}(r)^{\frac1\beta} \log\frac1\lambda\big)^\beta}\]
which provides exactly the polynomial decay of degree $\beta$.
\end{proof}

\subsection{Weakly contracting $1$-to-$k$ transition kernels}

Let $\mchain{M}=(m_x)_{x\in\Omega}$ be a transition kernel of the form $m_x = \sum_{j=1}^k \frac1k \delta_{x^j}$ where for each $x\in\Omega$, $B(x)=(x^j)_{1\le j\le k}$ is a family of $k$ points. We will sometimes call $\mchain{M}$ a $1$-to-$k$ transition kernel, but note that some $x^j$ might coincide. Note that compared to the setup of \cite{KLS}, this is just another way to encode a $k$-multiset.

\begin{defi}
Let us call \emph{contraction function} any continuous 
map $c:[0,+\infty)\to [0,+\infty)$ such that $c(0)=0$ and $c(r)<r$ for all $r>0$.
We say that $\mchain{M}$ is \emph{weakly contracting} if there are a contraction function $c$ and a real number $\lambda>1$ such that, for all $x,y\in\Omega$, there exist permutations $\eta,\sigma$ of $\{1,\dots, k\}$ such that
\begin{enumerate}
\item\label{enumi:wcontract} for all $j\in\{1,\dots, k\}$ it holds $d(x^{\eta(j)},y^{\sigma(j)}) \le c(d(x,y))$,
\item $d(x^{\eta(k)},y^{\sigma(k)}) \le d(x,y)/\lambda$.
\end{enumerate}
\end{defi}
Most usually, we will have $c(r)\sim r$ when $r\to 0$ so that  the second contraction property will be the stronger one; it will often be satisfied for more $j$'s than just $j=k$, but the above is sufficient for our purposes.

\begin{defi}
When $\mchain{M}$ is weakly contracting, we define a \emph{natural coupling} $\mchain{P}$ as follows (it need not be unique given $\mchain{M}$, but is uniquely defined by the family of permutations $\eta$ and $\sigma$ above).
For each $(x,y)\in\Omega\times\Omega$, we fix permutations $\eta,\sigma$ realizing item \ref{enumi:wcontract} above (the dependency of $\eta,\sigma$ on both $x$ and $y$ will be kept implicit); then for each pair $(x,y)$ and each word $w = (j_1,\dots, j_t) \in \{1,\dots,k\}^t$ we let $\bar x_t^{w}, \bar y_t^{w} \in\Omega^t$ be the sequences $(x_1,\dots, x_t), (y_1,\dots, y_t)$ such that $x_1=x^{\eta(j_1)} \in B(x)$ and $y_1=y^{\sigma(j_1)}\in B(y)$, and for all $n$: $x_{n+1} = (x_n)^{\eta_n(j_n)} \in B(x_n)$ and $y_{n+1} = (y_n)^{\sigma_n(j_n)} \in B(y_n)$ where  $\eta_n$ and $\sigma_n$ are the permutation associated to the pair $(x_n,y_n)$. Then the natural coupling is
\[\Pi^t_{x,y} = \sum_{w\in \{1,\dots,k\}^t} \frac{1}{k^t} \delta_{(\bar x_t^{w}, \bar y_t^{w})}.\]
\end{defi}
In other words we pair together the orbits according to the pairing given in the definition of ``weakly contracting''.

The following lemma will be fed into Theorem \ref{theo:core}, and we see that it is really the regularity of observables that drives the speed of decay.
\begin{lemm}\label{lemm:natural}
Let $\mchain{M}$ be a weakly contracting $1$-to-$k$ transition kernel and $\mchain{P}$ be the natural coupling. For all bounded normalized potential $A$, $\mchain{P}_A$  has exponential decay with respect to $\omega_{\alpha+\beta\log}$, for all $\alpha\in (0,1)$ and all $\beta\in\mathbb{R}$, and $\mchain{P}_A$ has polynomial decay of degree $\beta$ with respect to $\omega_{\beta\log}$ for all $\beta>0$.
\end{lemm}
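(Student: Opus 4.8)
The plan is to control, for each word $w=(j_1,\dots,j_t)\in\{1,\dots,k\}^t$, the distance $d(x_t^w,y_t^w)$ at the end of the paired orbits, and then to average over $w$ with weight $k^{-t}$. The key quantitative input is that along any coupled orbit, the distance decreases at worst by applying $c$ at each step, and strictly faster — by a factor $1/\lambda$ — precisely at those steps where the letter is $j_n=k$. So if a word $w$ contains the letter $k$ a total of $m$ times (out of $t$), then $d(x_t^w,y_t^w)\le c^{t-m}\big(\lambda^{-m}\,c^{?}(d(x,y))\big)$; being a bit careful, since $c$ is non-decreasing and $c(r)\le r$, one gets the clean bound $d(x_t^w,y_t^w)\le \lambda^{-m}\,c^{t-m}(d(x,y))$, or even more simply $d(x_t^w,y_t^w)\le c^{t-m}(d(x,y))$ while also $d(x_t^w,y_t^w)\le \lambda^{-m}d(x,y)$. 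The number of words with exactly $m$ occurrences of the letter $k$ is $\binom{t}{m}(k-1)^{t-m}$, and when normalized by $k^{-t}$ this is exactly the binomial distribution $\mathrm{Bin}(t,1/k)$ on $m$; in particular for $t$ large a definite fraction of steps (close to $t/k$) are "good" with overwhelming probability.

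First I would record the deterministic per-orbit estimate above, using only the two properties in the definition of "weakly contracting" and monotonicity/subunitarity of $c$. Second, I would split the average over $w$ into words with $m\ge \epsilon t$ good steps and words with $m<\epsilon t$, for a fixed small $\epsilon<1/k$. For the first group, use $d(x_t^w,y_t^w)\le \lambda^{-m}d(x,y)\le \lambda^{-\epsilon t}d(x,y)$, so after composing with $\omega_{\alpha+\beta\log}$ (which, by Lemma~\ref{lemm:holderlog}, satisfies $\omega(r/2)\le\theta\omega(r)$, hence $\omega(\lambda^{-\epsilon t}r)\lesssim \theta'^{\,t}\omega(r)$ for a suitable $\theta'\in(0,1)$) one gets a term bounded by $C\theta'^{\,t}\,\omega\circ d(x,y)$. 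For the second group, the binomial tail $\Pr[\mathrm{Bin}(t,1/k)<\epsilon t]$ decays exponentially in $t$ (Chernoff/Hoeffding), and on these words we simply bound $\omega\circ d(x_t^w,y_t^w)\le \omega(\diam\Omega)$; but we must turn this into something proportional to $\omega\circ d(x,y)$ rather than a constant, which is where the decay-function formalism is used: a bound of the form $\int\omega\circ d(x_t,y_t)\,d\Pi^t_{x,y}\le \min(a^t,\,b^t\,\omega\circ d(x,y)/\omega(\diam\Omega))\cdot\omega(\diam\Omega)$ can be repackaged as $F(t,\omega\circ d(x,y))$ for an exponential decay function by the standard trick (as in Step 4 of the proof of Theorem~\ref{theo:core}, or Lemma~\ref{lemm:expdec}): split according to whether $\omega\circ d(x,y)$ is large or small relative to $a^t\omega(\diam\Omega)$.

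For the polynomial statement with $\omega_{\beta\log}$ and $\alpha=0$: here the modulus is so flat that a contraction by a bounded factor $\lambda^{-m}$ only gains an additive constant in $(\log)^{\beta}$, so one does \emph{not} get exponential decay, only polynomial. Concretely, $\omega_{\beta\log}(\lambda^{-m}r)=\big(\log\frac{r_0}{r}+m\log\lambda\big)^{-\beta}$, and averaging over the (typically $\approx t/k$) good steps gives, up to constants, a bound of order $(\log\frac{r_0}{r}+t)^{-\beta}$; comparing with $\omega_{\beta\log}(r)=(\log\frac{r_0}{r})^{-\beta}=:s$ and writing $r$ in terms of $s$, this is of order $s/(1+ts^{1/\beta})^{\beta}$, i.e. exactly condition \ref{enumi:poldec1} of Lemma~\ref{lemm:poldec} with exponent $\alpha=1/\beta$, hence polynomial decay of degree $\beta$. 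Equivalently and more cleanly, I would verify the decay-time characterization \ref{enumi:poldec2}: one needs $t$ of order $\exp\!\big((\theta^{-1}-1)\,s^{-1/\beta}\big)$... — no, better to go through the half-life directly: $\tau_{1/2}(s)$ is the number of steps after which $\omega_{\beta\log}$ of the distance is halved, and since each good step adds $\log\lambda$ to the logarithm while good steps occur at linear rate, $\tau_{1/2}(s)\lesssim 1/s^{1/\beta}$, which is exactly \ref{enumi:poldec2}.

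The main obstacle I expect is the second step's bookkeeping: converting the "per-word" distance bounds into an honest \emph{decay function} $F$ in the sense of the paper (non-increasing in $t$, concave in $r$, subadditive in the $F(t_1+t_2,r)\le F(t_1,F(t_2,r))$ sense, and $F(t,r)\le Cr$). The natural estimates one writes down — a $\min$ of an exponentially small constant and an $\omega\circ d(x,y)$-proportional term — are not literally concave or literally of the product form, so one has to massage them, exactly as in Step 4 of Theorem~\ref{theo:core}; alternatively, and this is what I would actually do, appeal to Lemmas~\ref{lemm:expdec} and \ref{lemm:poldec}, which say it is enough to bound the half-life (or $\theta$-decay time) $\tau_\theta$, and bounding $\tau_\theta$ is the clean computation: one picks $t$ large enough that the binomial tail is below $\theta/2$ and $\lambda^{-\epsilon t}$ is below the level making $\omega$ drop by $\theta/2$, and reads off $\tau_\theta=O(1)$ in the Hölder-log case and $\tau_\theta=O(s^{-1/\beta})$ in the pure-log case. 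The only genuinely delicate point is that the binomial tail bound must be uniform in $x,y$ (it is, since it is purely combinatorial) and that the reduction to bounding $\tau_\theta$ requires the coupling's cost function to already satisfy the Lipschitz bound $\int\omega\circ d(x_t,y_t)\,d\Pi^t_{x,y}\le \omega(\diam\Omega)$ trivially, together with a crude $\le C\,\omega\circ d(x,y)$ bound for small times coming from $d(x_t^w,y_t^w)\le d(x,y)$ and concavity of $\omega$ — both of which are immediate.
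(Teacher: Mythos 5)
Your route is genuinely different from the paper's, and as written it has a gap in the exponential case.

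The paper uses neither a good-word/bad-word split nor a Chernoff bound. It exploits the \emph{concavity} of $\omega$ (Jensen's inequality): the left-hand side $\int\omega\circ d(x_t,y_t)\,\dd\Pi^t_{x,y}$ is the uniform average of $\omega\circ d(x_t^w,y_t^w)$ over the $k^t$ words, hence at most $\omega$ of the average distance; and that average is a binomial expectation computed exactly,
$\sum_w k^{-t}\lambda^{-N(w)}d(x,y)=d(x,y)\bigl(\tfrac{k-1}{k}+\tfrac{1}{k\lambda}\bigr)^t$,
an exponentially small multiple of $d(x,y)$. Feeding this into $\omega$ and then using $\omega(r/2)\le\theta\omega(r)$ (case $\alpha>0$, via Lemma \ref{lemm:holderlog}) or a direct computation (case $\alpha=0$) finishes in two lines. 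This is both shorter and sharper than a tail split, since no tail estimate is needed at all.

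The gap in your proposal is in the bad-word contribution. You bound it by $\omega(\diam\Omega)$ times the Chernoff tail, giving an additive term of order $e^{-ct}\omega(\diam\Omega)$ rather than a multiple of $\omega\circ d(x,y)$, and then assert this ``can be repackaged as $F(t,\omega\circ d(x,y))$ for an exponential decay function by the standard trick.'' That trick (Step 4 of Theorem \ref{theo:core}, or Lemma \ref{lemm:expdec}) applies to a bound proportional to $\omega\circ d(x,y)$; a \emph{sum} of an exponentially small constant and an exponentially decaying multiple of $r$ does not reduce to it. Concretely, with $F(t,r)\le\min\bigl(r,\ \theta'^{\,t}r+e^{-ct}\omega(\diam\Omega)\bigr)$, the decay time $\tau_\theta(r)$ is at least $\tfrac1c\log(\omega(\diam\Omega)/r)$, which diverges as $r\to0$: this is not exponential decay. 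The fix is easy and stays within your framework: on bad words, bound $\omega\circ d(x_t^w,y_t^w)\le\omega\circ d(x,y)$, not by $\omega(\diam\Omega)$, using that the coupled distance is non-increasing along the orbit ($c(r)\le r$ and $1/\lambda<1$). The bad-word contribution then becomes $e^{-ct}\omega\circ d(x,y)$, and the total is a genuine multiple $(\theta'^{\,t}+e^{-ct})\omega\circ d(x,y)$. In the polynomial case $\alpha=0$ your cruder bound happens to be harmless because $\tau_\theta(r)$ is allowed to grow like $r^{-1/\beta}$, making $e^{-cr^{-1/\beta}}$ negligible against $\theta r$; but the sharper bound spares you that verification. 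Finally, your appeal to ``$c$ is non-decreasing'' is an unstated hypothesis --- the paper's definition of a contraction function assumes only $c(0)=0$, $c(r)<r$ and continuity --- but your final per-orbit estimate $d(x_t^w,y_t^w)\le\lambda^{-m}d(x,y)$ is still correct for the weaker reason that the coupled distance is monotone non-increasing and shrinks by $1/\lambda$ at each of the $m$ good steps.
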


\begin{proof}
The natural coupling is Markovian, with transition kernel $\pi_{x,y}=\sum_j \frac1k\delta_{(x^{\eta(j)},y^{\sigma(j)})}$, which is non-dilating and contracting with positive probability thanks to the hypothesis that $\mchain{M}$ is weakly contracting. Corollary \ref{coro:markovian} provides the conclusion.
\end{proof}

Note that the above holds regardless of the contraction function (and would even hold with $c(r)=r$): a single contracting branch is sufficient to ensure the natural coupling of the transition kernel has good decay. By contrast, the specific features of $c$ will prove important for flatness: for weak $c$ our method will only apply to very regular potentials.

\subsection{Flatness criteria}
\label{sec:flatcrit}

\begin{lemm}\label{lemm:flat}
Let $\omega$, $\tilde\omega$ be moduli of continuity such that $\tilde\omega\le C\omega$ on $[0,1]$, so that $\C^{\tilde\omega}(\Omega) \subset \C^{\omega}(\Omega)$.

If $\mchain{M}$ is weakly contracting with contraction function $c$ and if there exist a constant $C$ such that for all $r\in(0,\diam\Omega]$,
\[\sum_{n\ge 1} \tilde\omega(c^n(r)) \le C\omega(r) \]
then every potential $A\in \C^{\tilde\omega}(\Omega)$ is flat with respect to $\mchain{P}$ and $\omega$.
\end{lemm}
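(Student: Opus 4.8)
The plan is to bound $|A^t(\bar x) - A^t(\bar y)|$ for $\Pi^t_{x,y}$-almost every $(\bar x,\bar y)$ by telescoping along the paired orbits produced by the natural coupling. Since $\Pi^t_{x,y}$ is supported on pairs $(\bar x^w_t, \bar y^w_t)$ indexed by words $w=(j_1,\dots,j_t)$, and since $A^t(\bar x) = \sum_{n=1}^t A(x_n)$, the triangle inequality gives
\[
\big| A^t(\bar x) - A^t(\bar y) \big| \le \sum_{n=1}^t \big| A(x_n) - A(y_n) \big| \le \Hol_{\tilde\omega}(A) \sum_{n=1}^t \tilde\omega\big( d(x_n,y_n) \big).
\]
So the whole statement reduces to controlling $\sum_{n=1}^t \tilde\omega(d(x_n,y_n))$ uniformly in $t$, $x$, $y$, and the word $w$.

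First I would record the key consequence of weak contraction for the natural coupling: at each step the pairing is chosen via permutations $\eta_n,\sigma_n$ satisfying item \textit{(i)} of the definition of ``weakly contracting'', so that $d(x_{n+1},y_{n+1}) \le c\big(d(x_n,y_n)\big)$ regardless of which letter $j_n$ is used. Iterating this from $d(x_1,y_1) \le c(d(x,y))$ downward (and using that $c$ is non-decreasing where needed — or, more carefully, that $c(r) < r$ together with monotonicity along the orbit of $c$; in fact one only needs $d(x_n,y_n)\le c^n(d(x,y))$, which follows by induction since each step applies $c$ to a quantity that is itself at most $c^{n-1}(d(x,y))$ and $c$ need not be assumed monotone if one instead argues that the sequence $d(x_n,y_n)$ is dominated termwise by the sequence $r_n := c^n(r)$ — this domination is immediate by induction using $c(d(x_n,y_n))\le c(r_n)$ when $c$ is monotone, so I would simply note that contraction functions may be taken monotone, or invoke monotonicity of $c$ as is implicit in the hypothesis). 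Hence $d(x_n,y_n) \le c^n\big(d(x,y)\big)$ for every $n\ge 1$ and every word $w$.

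Plugging this in and using that $\tilde\omega$ is non-decreasing,
\[
\big| A^t(\bar x) - A^t(\bar y) \big| \le \Hol_{\tilde\omega}(A) \sum_{n=1}^t \tilde\omega\big( c^n(d(x,y)) \big) \le \Hol_{\tilde\omega}(A) \sum_{n\ge 1} \tilde\omega\big( c^n(d(x,y)) \big) \le \Hol_{\tilde\omega}(A)\, C\, \omega\big(d(x,y)\big),
\]
where the last inequality is exactly the summability hypothesis applied with $r = d(x,y) \le \diam\Omega$. This bound is uniform in $t$ and in the word $w$, hence holds $\Pi^t_{x,y}$-almost surely, which is precisely the definition of flatness of $A$ with respect to $\mchain{P}$ and $\omega$ (with constant $\Hol_{\tilde\omega}(A)\,C$). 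Finally, the inclusion $\C^{\tilde\omega}(\Omega)\subset\C^\omega(\Omega)$ claimed in the statement follows at once from $\tilde\omega \le C\omega$ on $[0,1]$ together with the remark that only the germ of the moduli at $0$ matters.

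The main obstacle is purely bookkeeping: one must make sure the monotonicity of $c$ (needed to propagate the bound $d(x_n,y_n)\le c^n(d(x,y))$ along the orbit) is either available or harmless, and that the per-step contraction inequality genuinely holds for \emph{every} branch choice $j_n$ and not just the distinguished one — but this is guaranteed by item \textit{(i)} of the definition of weakly contracting, which provides a full pairing by permutations, and by the construction of the natural coupling which uses exactly those permutations. No delicate estimate is involved beyond the hypothesis itself; the content of the lemma is essentially the translation of ``summable decay of $\tilde\omega\circ c^n$'' into ``bounded Birkhoff-sum differences along coupled orbits.''
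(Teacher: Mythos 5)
Your argument is correct and coincides with the paper's: telescope $|A^t(\bar x)-A^t(\bar y)|$ along paired orbits, use that the natural coupling is supported on pairs with $d(x_n,y_n)\le c^n(d(x,y))$, and invoke the summability hypothesis. Your parenthetical worry about monotonicity of $c$ is a fair observation (the paper's definition of contraction function does not formally require it), but the paper silently assumes it when asserting the support bound, and it is harmless here since one can always pass to a monotone majorant.
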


\begin{proof}
Fix a potential $A\in \C^{\tilde\omega}(\Omega)$.
$\Pi^t_{x,y}$ is supported on pairs of sequences $(\bar x = (x_1,\dots,x_k),\bar y = (y_1,\dots,y_k))$ such that for all $n\le k$ it holds $d(x_n,y_n)\le c^n(d(x,y))$. For all $x,y,t$ and $\Pi_{x,y}^t$-almost all $\bar x=(x_1,\dots,x_t)$ ,$\bar y=(y_1,\dots,y_t)$ we thus have
\begin{align*}
\big\lvert A^t(\bar x) - A^t(\bar y) \big\rvert 
  &\le \sum_{n=1}^t \lvert A(x_n) - A(y_n) \rvert \\
  &\le \Hol_{\tilde \omega}(A) \sum_{n=1}^t \tilde\omega\circ d(x_n,y_n) \\
  &\le \Hol_{\tilde \omega}(A) \sum_{n=1}^t \tilde \omega (c^n(d(x,y))) \\
  &\lesssim \omega \circ d(x,y),
\end{align*}
which is the definition of flatness.
\end{proof}

In some circumstances, we will need a slightly finer analysis.
\begin{lemm}\label{lemm:flatbis}
Let $\mchain{M}$ be a weakly contracting $1$-to-$k$ transition kernel with contraction function $c$ and ratio $\lambda>1$. As above, $x^1,\dots,x^k$ denote the points supporting $m_x$ and $\eta,\sigma$ (the dependence on $(x,y)$ being kept implicit) define the natural coupling $\mchain{P}$.

Assume that there is a set $N\subset \Omega$ such that for all $x,y\in \Omega$ and all $j$, if either $x^{\eta(j)}\notin N$ or $y^{\sigma(j)}\notin N$, then $d(x^{\eta(j)},y^{\sigma(j)})\le d(x,y)/\lambda$. In other words, in the definition of ``weakly contracting'', the indices for which the stronger contraction does not hold are detected by the condition $(x^{\eta(j)},y^{\sigma(j)}) \in N^2$.

Let $\alpha\in(0,1]$ and $A\in\C^\alpha(\Omega)$.
If for some $C$, all $t$, all $x,y\in\Omega$ and $\Pi_{x,y}^t$-almost all $(\bar x,\bar y)$ staying in $N$ (i.e. $(x_n, y_n)\in N^2$ for all $n\in \{1,\dots, t\}$) we have
\[ \Big\lvert \sum_{n=1}^t A(x_n) - A(y_n) \Big\rvert \le C d(x,y)^\alpha \]
then $A$ is $\omega_{\alpha}$-flat.
\end{lemm}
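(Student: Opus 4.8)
The plan is to fix, for given $t$ and $x,y\in\Omega$, a pair of trajectories $(\bar x,\bar y)=\big((x_1,\dots,x_t),(y_1,\dots,y_t)\big)$ in the support of $\Pi^t_{x,y}$ (with $(x_0,y_0):=(x,y)$; since the natural coupling $\mchain{P}$ is finitely supported, ``$\Pi^t_{x,y}$-almost all'' just means ``all''), and to bound $\lvert A^t(\bar x)-A^t(\bar y)\rvert$ by a fixed multiple of $d(x,y)^\alpha=\omega_\alpha\circ d(x,y)$, which is exactly what $\omega_\alpha$-flatness asks for. I would split the index set $\{1,\dots,t\}$ into $G:=\{n:(x_n,y_n)\notin N^2\}$ and $B:=\{n:(x_n,y_n)\in N^2\}$ and estimate the two corresponding pieces of $A^t(\bar x)-A^t(\bar y)=\sum_{n=1}^t\big(A(x_n)-A(y_n)\big)$ separately.

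First I would record the elementary monotonicity facts for $r_n:=d(x_n,y_n)$. Since $\mchain{M}$ is weakly contracting, $r_n\le c(r_{n-1})\le r_{n-1}$, so $(r_n)_n$ is non-increasing; and by the hypothesis on $N$, whenever $n\in G$ one has the stronger bound $r_n\le r_{n-1}/\lambda$. Writing $G=\{n_1<n_2<\cdots\}$, the fact that $r$ does not increase between consecutive elements of $G$ and drops by a factor $\lambda$ at each of them gives $r_{n_i}\le\lambda^{-i}d(x,y)$. As $A$ is $\alpha$-H\"older, this already controls the $G$-piece:
\[\Big\lvert\sum_{n\in G}\big(A(x_n)-A(y_n)\big)\Big\rvert\le\sum_{n\in G}\lvert A(x_n)-A(y_n)\rvert\lesssim\sum_{i\ge1}r_{n_i}^{\alpha}\lesssim d(x,y)^{\alpha},\]
the series converging because $\lambda>1$ and $\alpha>0$.

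For the $B$-piece I would decompose $B$ into its maximal runs of consecutive integers $[a+1,b]$. For each such run, $a$ is either $0$ or an element of $G$ (since $a+1\in B$ while $a\notin B$), distinct runs produce distinct integers $a$, and $(x_{a+1},y_{a+1}),\dots,(x_b,y_b)\in N^2$. The key structural point is that the sub-trajectory $\big((x_{a+1},\dots,x_b),(y_{a+1},\dots,y_b)\big)$ lies in the support of $\Pi^{b-a}_{x_a,y_a}$ — it is precisely the tail, over the pair $(x_a,y_a)$, of the word defining $(\bar x,\bar y)$, the pairing permutations used along the way depending only on the current pair — and it stays in $N$. Applying the hypothesis yields $\big\lvert\sum_{n=a+1}^{b}\big(A(x_n)-A(y_n)\big)\big\rvert\le C\,d(x_a,y_a)^{\alpha}=C\,r_a^{\alpha}$, and summing over runs while reusing $r_{n_i}\le\lambda^{-i}d(x,y)$ bounds $\big\lvert\sum_{n\in B}\big(A(x_n)-A(y_n)\big)\big\rvert$ by a multiple of $d(x,y)^{\alpha}$. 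Adding the two estimates finishes the proof.

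The only genuine obstacle is conceptual: the hypothesis controls Birkhoff-sum differences only along trajectories that remain in $N$ for their \emph{entire} length, so it cannot be applied to $(\bar x,\bar y)$ directly; one must cut the trajectory at the times it leaves $N^2$ and invoke the hypothesis on each $N$-excursion. The arithmetic then closes because the distances $r_a$ at the starting times of the excursions decay geometrically — the very estimate already used for the $G$-piece — so both contributions are geometric series in $d(x,y)^{\alpha}$; a minor point to verify carefully is that each excursion sub-trajectory indeed belongs to the support of the corresponding $\Pi^{b-a}_{x_a,y_a}$, which rests on the natural coupling's permutations being attached to pairs, hence memoryless.
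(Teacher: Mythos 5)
Your proof is correct and follows essentially the same strategy as the paper's: both decompose the time interval at the instants where the paired trajectory leaves $N^2$, apply the hypothesis on each $N$-excursion, and exploit the geometric decay of the pairwise distances at each boundary crossing to sum the resulting bounds. Your version is slightly more explicit in separating out the boundary indices (your set $G$) and handling them with the $\alpha$-H\"older continuity of $A$ directly, whereas the paper folds these into its interval-by-interval bound, but the two estimates agree and rest on the same observation that the natural coupling is Markovian so each excursion lies in the support of $\Pi^{b-a}_{x_a,y_a}$.
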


\begin{proof}
Given $t,x,y$ and $\Pi_{x,y}^t$ almost any $(\bar x,\bar y)$, we decompose the time interval into ``runs in $N$'', i.e. write
$\{1,\dots,t\} = I_1\cup \dots \cup I_k$
where $I_j=\{n_j,\dots, n_{j+1}-1\}$ are intervals of integers, with $(n_j)$ is an increasing sequence and:
\begin{itemize}
\item $(x_{n+1},y_{n+1})\in N^2$ whenever $n$ and $n+1$ both lie in some $I_j$,
\item $d(x_{n_{j+1}},y_{n_{j+1}})\le d(x_{n_{j+1}-1},y_{n_{j+1}-1})/\lambda$ for all $j$.
\end{itemize}
We use the hypothesis and that distance decreases as $n$ increases to get
\begin{align*}
\Big\lvert \sum_{n=1}^t A(x_n)-A(y_n) \Big\rvert
  &\le  \sum_{j=1}^{k} \Big\lvert \sum_{n=n_j}^{n_{j+1}-1} A(x_n)- A(y_n) \Big\rvert \\
  &\le \sum_{j=1}^{k} C d(x_{n_j-1},y_{n_j-1})^\alpha \\
  &\le C \sum_{j=1}^{k} d(x,y)^\alpha/\lambda^{(j-1)\alpha}\\
  &\le C' d(x,y)^\alpha.
\end{align*}
\end{proof}

\subsection{Decay of iterated contraction functions}

In order to use Lemma \ref{lemm:flat}, we shall determine the speed of convergence to $0$ of iterates of $c$ in the cases of interest. Recall that $c$ is assumed to be a contraction function: it is continuous, $c(0)=0$ and $c(r)<r$ for all $r>0$.

\begin{lemm}\label{lemm:cdecay}
If for some $q\in(0,1)$ and $D>0$ we have \[c(r) = (1-Dr^q) r + o_{r\to0}(r^{1+q}) \]
then for some $a>0$, all $r\in (0,1]$ and all $n\in\mathbb{N}$ we have
\[ c^n(r) \le \frac{1}{\big(a n +\frac{1}{r^q}\big)^{\frac1q} }. \] 
\end{lemm}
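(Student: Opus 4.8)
The plan is to reduce the recursive inequality $c^{n+1}(r) \le (1 - D c^n(r)^q)c^n(r) + o(c^n(r)^{1+q})$ to a tractable scalar recursion by examining the sequence $u_n := 1/c^n(r)^q$, for which I expect to show $u_{n+1} \ge u_n + a$ for some constant $a>0$ uniform in $r$, as soon as $r$ is small; the bound $c^n(r) \le (an + r^{-q})^{-1/q}$ then follows immediately by iteration since $u_0 = r^{-q}$. The point is that the map $s \mapsto s(1-Ds^q)$ behaves, after the substitution $v = s^{-q}$, approximately like $v \mapsto v + qD$ to leading order; the error term $o(s^{1+q})$ contributes only a lower-order perturbation of $qD$, so one can absorb it by taking $a$ slightly smaller, say $a = qD/2$, at the cost of restricting to $s$ below some threshold $r_1$.

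The key steps, in order: First, fix $r_1>0$ small enough that for all $s \in (0,r_1]$ we have $c(s) \le (1 - \tfrac{D}{2}s^q)s$ and $\tfrac{D}{2}s^q < 1$ (possible since the $o(s^{1+q})$ term is eventually dominated by $\tfrac{D}{2}s^{1+q}$, and then shrink $r_1$ further if needed). Second, compute that for $s \le r_1$, with $s' = c(s)$, one has
\[
\frac{1}{(s')^q} \ge \frac{1}{(1-\tfrac{D}{2}s^q)^q s^q} \ge \frac{1 + \tfrac{qD}{2}s^q}{s^q} = \frac{1}{s^q} + \frac{qD}{2},
\]
using the elementary inequality $(1-t)^{-q} \ge 1 + qt$ for $t \in [0,1)$. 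Third, note that since $c$ is a contraction function, $c^n(r)$ is non-increasing in $n$; for $r \le r_1$ all iterates stay in $(0,r_1]$, so the previous step applies at every stage, giving $u_{n+1} \ge u_n + qD/2$ and hence $u_n \ge u_0 + n\,qD/2$, i.e.\ $c^n(r) \le (r^{-q} + \tfrac{qD}{2}n)^{-1/q}$ — this is the claimed bound with $a = qD/2$ on the range $r \le r_1$. Fourth, handle $r \in (r_1, 1]$: since $\Omega$ has finite diameter and $c$ is continuous with $c(r)<r$ on the compact interval $[r_1,1]$, there is a fixed $\kappa \in (0,1)$ with $c(r) \le \kappa r$ there, and more simply $\sup_{r\in[r_1,1]}c(r) =: \rho_1 < 1$; after finitely many steps (a bounded number, depending only on $r_1$) any orbit started in $[r_1,1]$ enters $(0,r_1]$, and then the previous bound applies with a shifted index. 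Enlarging $a$ downward (i.e.\ decreasing it) and the implied constant, one absorbs this finite transient into the stated form of the inequality for all $r \in (0,1]$ and all $n$.

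The main obstacle I anticipate is purely bookkeeping rather than conceptual: making the passage from the $r \le r_1$ regime to the full range $r \le 1$ clean, i.e.\ checking that the finite number of ``large'' steps can be absorbed into a single constant $a$ valid uniformly in both $r$ and $n$ without the inequality degrading. One tidy way to do this is to observe that it suffices to prove the bound for all $r$ in $(0,r_1]$ and then note that for $r \in (r_1,1]$ we have $c(r) \le \rho_1$, and iterating the small-$r$ estimate from $c(r)$ gives $c^{n}(r) = c^{n-1}(c(r)) \le (\rho_1^{-q} + \tfrac{qD}{2}(n-1))^{-1/q}$; since $\rho_1^{-q} \ge 1 > r_1^{-q}\cdot r_1^q \ge$ (a fixed quantity) one readily re-expresses this as $(a'n + r^{-q})^{-1/q}$ for a suitable smaller $a' \in (0, qD/2]$, using $r^{-q} \le r_1^{-q}$. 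The verification that $(1-t)^{-q} \ge 1+qt$ and that the $o$-term is genuinely absorbed is routine and I would not spell it out in full.
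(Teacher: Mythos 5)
Your proof follows the same core idea as the paper: set $u_n = c^n(r)^{-q}$, show the increment $u_{n+1}-u_n$ is bounded below by a positive constant $a$, and iterate. Your Steps 1--3 (fixing $r_1$ so the $o$-term is dominated, using $(1-t)^{-q}\ge 1+qt$ to get $c(s)^{-q}\ge s^{-q}+qD/2$ for $s\le r_1$) are exactly the computation in the paper, just unwound in more detail.

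Where you diverge is Step 4, the range $r\in(r_1,1]$, and here the paper is simpler than what you propose. The paper merely observes that $r\mapsto c(r)^{-q}-r^{-q}$ is a \emph{continuous, strictly positive} function on the compact interval $[r_1,1]$ (positivity is just $c(r)<r$), hence bounded below by some $a_2>0$; taking $a=\min(a_2, qD/2)$ one has $c(r)^{-q}\ge r^{-q}+a$ on \emph{all} of $(0,1]$ in a single stroke, and the induction closes with no transient to absorb. Your transient argument can be made to work, but it is more delicate than you let on, and the ``tidy way'' you offer at the end has a genuine gap: writing $c^n(r)=c^{n-1}(c(r))\le\big(\rho_1^{-q}+\tfrac{qD}{2}(n-1)\big)^{-1/q}$ implicitly assumes the small-$r$ estimate applies starting from $c(r)$, i.e.\ that $c(r)\le r_1$; but $\rho_1=\sup_{[r_1,1]}c$ has no reason to be $\le r_1$ (indeed $r_1$ must be taken small to control the $o$-term, and nothing forces $c$ to map $[r_1,1]$ into $[0,r_1]$). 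The non-tidy version --- waiting a bounded number $M$ of steps until the orbit enters $(0,r_1]$, then patching --- is salvageable, but the patch requires its own estimate (e.g.\ $c(r)\le\kappa r$ on $[r_1,1]$ gives $(\kappa^{-nq}-1)r^{-q}\ge an$ for $n\le M$ provided $a$ is small enough, and for $n>M$ one uses $u_n\ge u_M+\tfrac{qD}{2}(n-M)\ge aM+r^{-q}+\tfrac{qD}{2}(n-M)\ge an+r^{-q}$). The paper's compactness observation eliminates all of this bookkeeping.
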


\begin{proof}
We have
\begin{align*}
\frac{1}{c(r)^q} &= \frac{1}{r^q}\big(1-Dr^q+o(r^q)\big)^{-q} \\
  &= \frac{1}{r^q}\big(1+qDr^q+o(r^q)\big) \\
  &= \frac{1}{r^q}+ qD + o(1).
\end{align*}
For some $a>0$ and $r_1\in(0,1]$ and all $r\in(0,r_1)$, we thus have
\begin{equation}
\frac{1}{c(r)^q} \ge \frac{1}{r^q}+a.
\label{eq:translation}
\end{equation}
Since $c(r)<r$ on $[r_1,1]$ and $c$ is continuous, Inequality \eqref{eq:translation} holds on $(0,1]$ up to taking a smaller $a$. Then by induction, for all $n$ it holds
\[\frac{1}{c^n(r)^q} \ge \frac{1}{r^q}+an,\]
which is the claim.
\end{proof}

\begin{lemm}\label{lemm:clogdecay}
If for some $q\in(0,+\infty)$ we have 
\[c(r) = \Big(1-\frac{1}{\big(\log \frac1r \big)^{q}} \Big) r + o_{r\to0}\Big(\frac{r}{\big(\log \frac1r \big)^{q}}\Big), \]
then
for some $a>0$, all $r\in (0,1]$ and all $n\in\mathbb{N}$ we have
\[ c^n(r) \le e^{-\big(an+(\log \frac1r )^{q+1} \big)^{\frac{1}{q+1}}}.\] 
\end{lemm}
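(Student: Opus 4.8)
The plan is to reproduce the argument of Lemma~\ref{lemm:cdecay} with the quantity $1/r^q$, which there grows by a bounded amount under each application of $c$, replaced by $\bigl(\log\tfrac1r\bigr)^{q+1}$. Writing $L(r):=\log\tfrac1r$ for $r\in(0,1]$, the aim is to show that for some $a>0$,
\[ L\bigl(c(r)\bigr)^{q+1}\ \ge\ L(r)^{q+1}+a\qquad\text{for all }r\in(0,1].\]
Granting this one-step estimate, an immediate induction gives $L\bigl(c^n(r)\bigr)^{q+1}\ge L(r)^{q+1}+an$ for all $n$, that is $\log\tfrac1{c^n(r)}\ge\bigl(an+(\log\tfrac1r)^{q+1}\bigr)^{1/(q+1)}$, which is exactly the asserted bound.

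First I would prove this estimate for $r$ close to $0$. From the hypothesis $c(r)=\bigl(1-L(r)^{-q}\bigr)r+o\bigl(rL(r)^{-q}\bigr)$ together with $-\log(1-u)=u+o(u)$ and $L(r)\to+\infty$, one gets $L\bigl(c(r)\bigr)=L(r)+L(r)^{-q}+o\bigl(L(r)^{-q}\bigr)$. Factoring out $L(r)$ and using $(1+v)^{q+1}=1+(q+1)v+o(v)$,
\[ L\bigl(c(r)\bigr)^{q+1}=L(r)^{q+1}\Bigl(1+L(r)^{-(q+1)}+o\bigl(L(r)^{-(q+1)}\bigr)\Bigr)^{q+1}=L(r)^{q+1}+(q+1)+o(1).\]
Hence $L(c(r))^{q+1}-L(r)^{q+1}\to q+1>0$ as $r\to0$, so there are $r_1\in(0,1)$ and $a>0$ with $L(c(r))^{q+1}\ge L(r)^{q+1}+a$ for all $r\in(0,r_1)$.

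To extend the estimate to all of $(0,1]$ I would argue exactly as in Lemma~\ref{lemm:cdecay}: on the compact interval $[r_1,1]$ the difference $L(c(r))^{q+1}-L(r)^{q+1}$ is continuous and strictly positive, since $c(r)<r$ forces $L(c(r))>L(r)$, hence bounded below by a positive constant there; replacing $a$ by a smaller positive number, the one-step estimate then holds on all of $(0,1]$. Since every iterate $c^n(r)$ again lies in $(0,1]$ (the case $c^n(r)=0$ being trivial), the induction and the rearrangement described above conclude the proof. The only genuinely computational point is the asymptotic expansion of $L(c(r))^{q+1}$, where the $o$-terms must be tracked through both the logarithm and the power $q+1$; this is routine, and I do not expect any real obstacle.
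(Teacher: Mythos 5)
Your proposal is correct and follows essentially the same route as the paper: compute the asymptotic expansion of $\bigl(\log\tfrac1{c(r)}\bigr)^{q+1}$ near $r=0$ to obtain the additive increment $q+1+o(1)$, extend the lower bound $\ge a$ to all of $(0,1]$ using $c(r)<r$ and compactness of $[r_1,1]$, and induct. The only cosmetic difference is that the paper first expands $\bigl(\log\tfrac1{c(r)}\bigr)^{\alpha}$ for arbitrary $\alpha$ and then sets $\alpha=q+1$, whereas you specialize immediately, which changes nothing.
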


\begin{proof}
We have for $r>0$ and arbitrary $\alpha>0$:
\begin{align*}
\log\frac{1}{c(r)} &= \log\frac1r -\log \Big(1-\frac{1}{\big(\log \frac1r \big)^{q}} + o\Big(\frac{1}{\big(\log \frac1r \big)^{q}}\Big) \Big) \\
  &= \log\frac1r + \frac{1}{\big(\log \frac1r \big)^{q}} + o\Big(\frac{1}{\big(\log \frac1r \big)^{q}}\Big)\\
\big( \log\frac{1}{c(r)} \big)^{\alpha} 
  &= \big( \log\frac1r \big)^{\alpha} \Big( 1+  \frac{1}{\big(\log \frac1r \big)^{q+1}} + o\Big(\frac{1}{\big(\log \frac1r \big)^{q+1}}\Big)\Big)^{\alpha} \\
  &=  \big( \log\frac1r \big)^{\alpha} + \alpha\big( \log\frac1r\big)^{\alpha-(q+1)}  + o\Big(\big( \log\frac1r\big)^{\alpha-(q+1)} \Big).
\end{align*}
Taking $\alpha=q+1$ it comes
$\big( \log\frac{1}{c(r)} \big)^{q+1} =  \big( \log\frac1r \big)^{q+1} + (q+1) + o(1)$
so that for some $a,r_0>0$ and all $r\in (0,r_0)$ we get
\[\big( \log\frac{1}{c(r)} \big)^{q+1} \ge \big( \log\frac1r \big)^{q+1} + a.\]
Up to adjusting $a$, the same also holds true on $[r_0,1]$ since $c$ is continuous and $c(r)<r$ for positive $r$. It follows that
for all $r\in (0,1]$ and all $n\in\mathbb{N}$:
\[\big( \log\frac{1}{c^n(r)} \big)^{q+1} 
  \ge \big( \log\frac1r \big)^{q+1} + an \]
and the result follows.
\end{proof}

\subsection{A half-general result}

Now we specialize the general results above to the case of $k$-to-$1$ maps, getting closer to Theorems \ref{theo:mainHol}-\ref{theo:mainpol}.
\begin{theo}\label{theo:nuem}
Let $T:\Omega\to\Omega$ be a Lipschitz $k$-to-$1$ map of a compact metric space $\Omega$, such that the uniform backaward random walk $\mchain{M}=(m_x)_{x\in\Omega}$ defined by $m_x=\frac1k \sum_{y\in T^{-1}(x)} \delta_y$ is weakly contracting.
Let $A\in\C^{\alpha+\beta\log}(\Omega)$ be a flat potential with respect to some modulus $\omega_{\alpha+\beta\log}$ and to the natural coupling $\mchain{P}$.

Then the transfer operator $\op{L}_A=\op{L}_{T,A}$ has a positive eigenfunction $h_A$ in $\C^{\alpha+\beta\log}(\Omega)$ with a positive eigenvalue $\lambda_A$; its dual $\op{L}_{A}^*$ has an eigenprobability $\nu_A$ for the same eigenvalue and $d\mu_A:=h_A \dd\nu_A$ defines a $T$-invariant probability (the RPF measure of $A$).

If $\alpha>0$, then moreover  $\op{L}_A$ has spectral gap in $\C^{\alpha+\beta\log}$: for some $C>0$, $\delta \in (0,1)$ and for all observable $f\in\C^{\alpha+\beta\log}(\Omega)$ such that $\int f\dd\mu_A=0$ we have
\[ \lVert \op{L}_A^t f \rVert_{\alpha+\beta\log} \le C (1-\delta)^t \lambda_A^t \lVert f\rVert_{\alpha+\beta\log}.\]
In this case, we thus have exponential decay of correlations: for some $C$ and all $f,g$ it holds
\[\Big\lvert \int f \cdot g\circ T^t \dd\mu_A -\int f \dd\mu_A \int g\dd\mu_A\Big\rvert \le C (1-\delta)^t \Hol_{\alpha+\beta\log}(f) \lVert g\rVert_{L^1(\mu_A)}.\]

Otherwise, i.e. if $\alpha=0$ and $\beta>0$, $\op{L}_{T,A}$ decays in the uniform norm as a degree $\beta$ polynomial: for some $C>0$ and all observable $f\in\C^{\beta\log}(\Omega)$ such that $\int f\dd\mu_A=0$ we have
\[ \lVert \op{L}_A^t f \rVert_{\infty} \le C \frac{\lVert f\rVert_{\beta\log}}{t^\beta} \lambda_A^t.\]
In that case, we thus have polynomial decay of correlation of degree $\beta$: for  some $C$ and all $f,g$ it holds
\[\Big\lvert \int f \cdot g\circ T^t \dd\mu_A -\int f \dd\mu_A \int g\dd\mu_A\Big\rvert \le C \frac{ \Hol_{\beta\log}(f) \lVert g\rVert_{L^1(\mu_A)}}{t^\beta}.\]
\end{theo}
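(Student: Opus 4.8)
The plan is to assemble Theorem \ref{theo:nuem} from the machinery already developed: Proposition \ref{prop:eigenf2} for the positive eigenfunction, Theorem \ref{theo:core} for the contraction estimate on the dual, and Lemma \ref{lemm:natural} for the decay properties of the natural coupling. The organizing principle is that the hypotheses of Theorem \ref{theo:nuem} are exactly tailored so that each of these three ingredients applies to $\mchain{M}=(m_x)$, the natural coupling $\mchain{P}$, and the potential $A\in\C^{\alpha+\beta\log}(\Omega)$.

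First I would establish the eigenfunction half of the RPF theorem. Since $\mchain{M}$ is weakly contracting, Lemma \ref{lemm:natural} shows the natural coupling has exponential $\omega_{\alpha+\beta\log}$-decay when $\alpha>0$ (hence in particular is $\omega_{\alpha+\beta\log}$-H\"older at step $1$) and polynomial $\omega_{\beta\log}$-decay when $\alpha=0$; in either case $\mchain{P}$ is $\omega$-H\"older at step $1$ with $\omega=\omega_{\alpha+\beta\log}$. Since $A$ is assumed flat with respect to $\mchain{P}$ and $\omega$, Proposition \ref{prop:eigenf2} applies with $\fX=\C^{\alpha+\beta\log}(\Omega)$: it yields $\lambda_A>0$ and a positive $h_A\in\C^{\alpha+\beta\log}(\Omega)$ with $\op{L}_A h_A=\lambda_A h_A$. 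To get the dual eigenprobability $\nu_A$, I would pass to the normalized potential $\tilde A = A+\log h_A-\log h_A\circ T-\log\lambda_A$; since $T$ is Lipschitz, composition with $T$ preserves $\C^{\alpha+\beta\log}(\Omega)$, so by Corollary \ref{coro:normalization} (whose hypotheses are met: $\mchain{M}$ is a backward walk for $T$, $\mchain{P}$ is $\omega$-H\"older, $A$ is flat) the potential $\tilde A$ is normalized and flat, and $\op{L}_{\tilde A}$ is conjugate to $\op{L}_A$ up to the constant $\lambda_A$. Now Theorem \ref{theo:core} applies to $\tilde A$: it gives that $\op{L}_{\tilde A}^*$ contracts on $(\proba(\Omega),\wass_\omega)$, hence fixes a unique probability $\nu_A$ with $\op{L}_{\tilde A}^*\nu_A=\nu_A$; translating back, $\op{L}_A^*\nu_A=\lambda_A\nu_A$. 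Then $\dd\mu_A=h_A\dd\nu_A$ (normalized to a probability) is $T$-invariant by the computation recalled in Section \ref{ssec:transferop}, which applies since $\C^{\alpha+\beta\log}(\Omega)$ separates measures and $\op{L}_{\tilde A}$ acts on it.

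Next I would deduce the decay statements. When $\alpha>0$, Lemma \ref{lemm:natural} gives exponential $\omega$-decay for $\mchain{P}$, so by Lemma \ref{lemm:DiracReduction} the dual $\op{L}_{\tilde A,0}^*$ of the \emph{unweighted} kernel decays exponentially in $\wass_\omega$; but what we need is the weighted one. Here Theorem \ref{theo:core}, applied with the exponential decay function $F(t,r)=C(1-\delta)^t r$, directly yields that $\op{L}_{\tilde A}^*$ decays exponentially in $\wass_\omega$ (the bullet points of Theorem \ref{theo:core} spell this out). Then Proposition \ref{prop:WassDecay}, applied to $\op{L}=\op{L}_{\tilde A}$ with $\omega'=\omega$, converts this into $\lVert \op{L}_{\tilde A}^t f-\mu_A(f)\rVert_\omega\le C'(1-\delta)^t\Hol_\omega(f)$; applied to $f$ with $\mu_A(f)=0$ and unwinding the conjugacy $\op{L}_{\tilde A}^t f = \lambda_A^{-t} h_A^{-1}\op{L}_A^t(h_A f)$, and replacing $f$ by a general zero-$\nu_A$-average observable, gives the claimed spectral gap bound $\lVert\op{L}_A^t f\rVert_{\alpha+\beta\log}\le C(1-\delta)^t\lambda_A^t\lVert f\rVert_{\alpha+\beta\log}$ (one must check the bookkeeping that $\nu_A(f)=0$ for the observable corresponds to $\mu_A$-mean-zero after multiplying by $h_A$, and that multiplication by $h_A^{\pm1}\in\C^{\alpha+\beta\log}$ is bounded since this is a Banach algebra). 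When $\alpha=0$, $\beta>0$, Lemma \ref{lemm:natural} gives polynomial decay of degree $\beta$, i.e. $F$ polynomial with degree $\beta$ in the sense of Lemma \ref{lemm:poldec}; Theorem \ref{theo:core} then gives polynomial decay of $\op{L}_{\tilde A}^*$ of the same degree, and Proposition \ref{prop:WassDecay} (the $\lVert\op{L}^tf-\mu(f)\rVert_\infty$ estimate, with $F(t,\omega(\diam\Omega))\lesssim t^{-\beta}$ from Lemma \ref{lemm:poldec}) yields $\lVert\op{L}_A^t f\rVert_\infty\le C\lVert f\rVert_{\beta\log}\,t^{-\beta}\lambda_A^t$.

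Finally, the decay-of-correlations corollaries in both regimes follow from the operator estimates by the standard identity: for $g\in L^1(\mu_A)$ and $f$ suitably regular,
\[\int f\cdot(g\circ T^t)\dd\mu_A-\int f\dd\mu_A\int g\dd\mu_A
 = \int \big(\widehat{\op{L}}_A^t\widehat f - \mu_A(\widehat f)\big)\, g\,\dd\mu_A,\]
where $\widehat{\op{L}}_A$ is the transfer operator normalized so that $\mu_A$ is invariant (i.e. conjugated by $h_A$ and divided by $\lambda_A$, which maps $\C^{\alpha+\beta\log}$ to itself with $\Hol_{\alpha+\beta\log}(\widehat f)\lesssim \Hol_{\alpha+\beta\log}(f)$), and then one bounds by $\lVert g\rVert_{L^1(\mu_A)}$ times the sup-norm estimate just obtained. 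I expect the \textbf{main obstacle} to be purely bookkeeping rather than conceptual: carefully tracking the conjugacy between $\op{L}_A$ and the $\mu_A$-preserving normalization $\widehat{\op{L}}_A$, verifying that the two notions of ``zero average'' ($\nu_A(f)=0$ for observables of $\op{L}_A$ versus $\mu_A(f)=0$ for observables of $\widehat{\op{L}}_A$) match up under multiplication by $h_A^{\pm1}$, and checking that all the norm transfers are bounded using the Banach algebra property of $\C^{\alpha+\beta\log}(\Omega)$; everything substantive has already been proved in Sections \ref{sec:positive} and \ref{sec:maincontraction}.
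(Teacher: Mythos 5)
Your proposal is correct and follows essentially the same route as the paper: Corollary \ref{coro:normalization} (built on Proposition \ref{prop:eigenf2}) for the eigenfunction and normalized potential, Lemma \ref{lemm:natural} plus Theorem \ref{theo:core} for the Wasserstein contraction of $\op{L}_{\tilde A}^*$, Proposition \ref{prop:WassDecay} to convert that into operator norm decay, and the standard identity $\op{L}_{\tilde A}^t(f\cdot g\circ T^t)=\op{L}_{\tilde A}^t(f)\cdot g$ for the decay of correlations. The bookkeeping concern you flag — whether the zero-average condition for the $\op{L}_A$ estimate is $\nu_A(f)=0$ or $\mu_A(f)=0$, and how multiplication by $h_A^{\pm1}$ intertwines the two — is in fact a genuine subtlety that the paper glosses over (its proof writes ``when $\mu_A(f)=0$'' even though the conjugacy $\op{L}_A^t f=\lambda_A^t h_A\op{L}_{\tilde A}^t(h_A^{-1}f)$ requires $\mu_A(h_A^{-1}f)=\nu_A(f)=0$ to kill the constant term), so noticing it is to your credit.
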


\begin{proof}
Since $\mchain{M}$ is weakly contracting, the natural coupling is $\omega$-H\"older for all $\omega$. Since $T$ is Lipschitz, it preserves $\C^{\alpha+\beta\log}(\Omega)$ by composition, and we can apply Corollary \ref{coro:normalization}: there is a positive eigenvalue $\lambda_A$ and a positive eigenfunction $h_A\in \C^{\alpha+\beta\log}(\Omega)$, and the normalized potential $\tilde A=A+\log h_A-\log h_A\circ T-\log \lambda_A$ is flat and in $\C^{\alpha+\beta\log}(\Omega)$.

By Lemma \ref{lemm:natural}, the natural coupling $\mchain{P}$ is $\omega_{\alpha+\beta\log}$-decaying with a rate which is either exponential (if $\alpha>0$) or polynomial of degree $\beta$ (if $\alpha=0$ and $\beta>0$). Theorem \ref{theo:core} then shows that $\op{L}_{\tilde A}^*$ has at least decay rate $F$ which is either exponential (if $\alpha>0$) or polynomial of degree $\beta$ (if $\alpha=0$ and $\beta>0$), with respect to the Wasserstein metric $\wass_{\alpha+\beta\log}$. In particular it has a unique fixed probability $\nu_{\tilde A}$. It follows that $\op{L}_A^*$ has a unique eigenprobability $\dd\nu_A=\frac{1}{h_A}\dd\nu_{\tilde A}$, and $\mu_A= \nu_{\tilde A}$ is $T$-invariant (see Section \ref{ssec:transferop}).

Now we apply Proposition \ref{prop:WassDecay} to $\op{L}_{\tilde A}$. First, if $\alpha>0$ we can take $F(t,r) = C (1-\delta)^t r$ for some $C>0$ and $\delta\in (0,1)$ and we get
\[\lVert \op{L}_{\tilde A}^t f- \mu_A(f) \rVert_{\alpha+\beta\log} \le C \Hol_{\alpha+\beta\log}(f) (1-\delta)^t\]
where $C$ depends notably on $\diam\Omega$. Since $\op{L}_A^t f= \lambda_A^t h_A \op{L}_{\tilde A}^t(h_A^{-1} f)$, when $\mu_A(f)=0$ it comes
\[\lVert \op{L}_{A}^t f \rVert_{\alpha+\beta\log} \le C \lambda_A^t \Hol_{\alpha+\beta\log}(h_A^{-1} f) (1-\delta)^t \le C'(1-\delta)^t\lambda_A^t\lVert f\rVert_{\alpha+\beta\log}.\]
Note that we have to switch from the H\"older constant of $h_A^{-1} f$ to the H\"older \emph{norm } of $f$ since the norm is multiplicative, but not the constant. 

Second, if $\alpha=0$ and $\beta>0$, we can take $F(t,r) = \frac{Br}{\big( tr^{\frac1\beta} + b\big)^\beta}$ and we get
\[\lVert \op{L}_{\tilde A}^t f -\mu_A(f)\rVert_\infty \le \Hol_{\beta\log}(f) \frac{C}{t^\beta},\]
from which we deduce as above $\lVert \op{L}_{A}^t f\rVert_\infty \le C \frac{\lVert f\rVert_{\beta\log}}{t^\beta} \lambda_A^t$ when $\mu_A(f)=0$.

In both case, the decay of correlations follows in the classical way: one observes that $\op{L}_{\tilde A}^t(f\cdot g\circ T^t)=\op{L}_{\tilde A}^t(f)\cdot g$, assumes $\mu_A(f)=0$ by adding a constant, and then writes
\begin{align*}
\Big\lvert \int f \cdot g\circ T^t \dd\mu_A \Big\rvert
  &= \Big\lvert \int f \cdot g\circ T^t \dd\big(\op{L}_{\tilde A}^{*t}\mu_A\big) \Big\rvert \\
  &= \Big\lvert \op{L}_{\tilde A}^t(f\cdot g\circ T^t) \dd\mu_A \Big\rvert \\
  &= \Big\lvert \op{L}_{\tilde A}^t(f) \cdot g \dd\mu_A \Big\rvert \\
  &\le \lVert \op{L}_{\tilde A}^t f \rVert_\infty \int \lvert g\rvert \dd\mu_A,
\end{align*}
ending the proof.
\end{proof}

In the case $\alpha=0$, we could obtain decay in an intermediate generalized H\"older norm (between the uniform norm and $\lVert\cdot\rVert_{\beta\log}$) by using the full extent of Proposition \ref{prop:WassDecay}.

\begin{rema}\label{rema:nopartition}
Theorem \ref{theo:nuem} applies only when the number of inverse images is constant (``full branches''), and a natural question is whether the method applies more generally.

Since we rely on spaces of continuous functions, one cannot hope to deal with the traditional transfer operator (corresponding to the uniform backward random walk) when branches are not full. However, Section \ref{sec:maincontraction} applies as soon as one finds a backward random walk that does preserve continuity and contracts appropriately. In order to preserve continuity, one needs to jump only with small probability to a point located near the endpoint of its branch; it should be possible to carry this out in some non-Markovian examples.
\end{rema}

\section{Application to circle maps}\label{sec:proofmain}

\subsection{The Pomeau-Manneville family}

In this section we finish the proof of Theorems \ref{theo:mainHol} and \ref{theo:mainpol} by considering the Pomeau-Manneville family: given an exponent $q>0$, we consider on the circle $\mathbb{T}=\mathbb{R}/\mathbb{Z}$ parametrized by $[0,1)$ the map
\begin{align*}
T_q : \mathbb{T} &\to \mathbb{T} \\
  x &\mapsto \begin{cases} 
    \big(1+(2x)^q \big)x &\text{if }x\in [0,\frac12] \\ 
    2x-1 &\text{if } x\in [\frac12, 1)
             \end{cases}
\end{align*}
and we let $\mchain{M}$ be the corresponding $1$-to-$k$ backward random walk.

\subsubsection{Inverse branches and weak contraction}

A point $y\in [0,1)$ has two inverse images, denoted by $b_1(y)$ and $b_2(y)=(y+1)/2$ (the maps $b_1$ and $b_2$ are called \emph{branches} of $T_q$). We do not search for an explicit expression of $b_1(y)$, but we observe that
\begin{itemize}
\item $b_1$ is concave,
\item $b_1(x)\ge x/2$, and 
\item $b_1(y) = \big(1-(2y)^q\big)y + O(y^{2q+1})$.
\end{itemize}

Turning back to the notation of Section \ref{sec:chains}, to prove that $\mchain{M}$ is weakly contracting we consider $x,y\in\mathbb{T}$ and their inverse images $x^1=b_1(x), x^2=(x+1)/2$ and $y^1=b_1(y), y^2=(y+1)/2$. 

When a shortest path from $x$ to $y$ avoids $0$, i.e. when $d(x,y) = \lvert x-y\rvert$, we pair $x^1$ with $y^1$ and $x^2$ with $y^2$ (i.e. $\sigma=\eta=\Id$). On the one hand we have $d(x^2,y^2)=d(x,y)/2$; on the other hand, by concavity $d(x^1,y^1)\le b_1(d(x,y))$.

When the unique shortest path from $x$ to $y$ goes through $0$, we pair $x^1$ with $y^2$ and $x^2$ with $y^1$. Up to swapping $x$ and $y$ we assume that $y$ is closer to $1$ and $x$ closer to $0$, so that $d(x,y)=x+1-y$ and $y \ge 1/2$. Then on the one hand, since $y\ge 1/2$ the derivative of $T_q'$ is greater than $2$ on $[y,\frac12]$ and we have $d(x^2,y^1)\le d(x,y)/2$. On the other hand, setting $d=d(x,y)$ we have
\[d(x^1,y^2) = b^1(x)+\frac{1-y}{2} = \frac{d+x}{2}-2^q x^{q+1} +O(x^{2q+1}).\]
The $O(x^{2q+1})$ is also an $O(d^{2q+1})$, and $d$ being fixed the quantity $\frac{d+x}{2}-2^q x^{q+1}$ is increasing in $x$ when $d\le x_0:= 2^{-\frac{q+1}q}(q+1)^{-\frac1q}$. We deduce
\[d(x^1,y^2) \le (1-(2d)^q)d +O(d^{2q+1}).\] 

This proves that $\mchain{M}$ is weakly contracting, with coefficient $\lambda=1/2$ and function $c(r) = (1-(2r)^q)r + O(r^{2q+1})$.

\subsubsection{Case $q<1$.}

We start with the case $q<1$ of Theorem \ref{theo:mainHol}. Let thus $\alpha,\gamma\in (0,1)$ be such that $\gamma-\alpha\ge q$ and let $A\in \C^\gamma(\mathbb{T})$. Since $\gamma>\alpha$, we have $\omega_\gamma\le C\omega_\alpha$ and in particular $A\in\C^\alpha(\mathbb{T})$. By Lemma \ref{lemm:cdecay} for some $a>0$ and all $r>0$ we have $c^n(r) \le \big(an+r^{-q}\big)^{-\frac1q}$, so that (using $\gamma>q$):
\begin{align*}
\sum_{n\ge1} \omega_\gamma(c^n(r)) &\le \sum_{n\ge1} \big(an+r^{-q}\big)^{-\frac\gamma q} \\
  &\le  \int_0^\infty \big(ax+r^{-q}\big)^{-\frac\gamma q} \dd x \\
  &\lesssim \int_{r^{-q}}^\infty y^{-\frac{\gamma}{q}} \dd y \\
  &\lesssim (r^{-q})^{1-\frac{\gamma}{q}} =  r^{\gamma-q} \le r^\alpha.
\end{align*}
By Lemma \ref{lemm:flat}, we deduce that $A$ is flat with respect to $\omega_\alpha$. Then Theorem \ref{theo:nuem} applies, showing that $\op{L}_A$ satisfies a Ruelle-Perron-Frobenius Theorem with a spectral gap, and exponential decay of correlations.

For Theorem \ref{theo:mainpol}, we consider $\beta>1$ and $A\in\C^{q+\beta\log}(\mathbb{T})$ and have to prove that $A$ is $\omega_{(\beta-1)\log}$-flat; then Theorem \ref{theo:nuem} shows the Ruelle-Perron-Frobenius Theorem for $\op{L}_{T_q,A}$ and degree $\beta-1$ polynomial decay of correlations. Flatness follows again from Lemma  \ref{lemm:flat}:
\begin{align*}
\sum_{n\ge1} \omega_{q+\beta\log}(c^n(r)) &\le \sum_{n\ge1} \frac{1}{(an+r^{-q})\Big(\log\big( r_0 (an+r^{-q})^{\frac1q}\big)\Big)^{\beta}} \\
  &\le \int_0^\infty \frac{1}{(ax+r^{-q})\Big(\log\big( r_0 (ax+r^{-q})^{\frac1q}\big)\Big)^{\beta}} \dd x \\
  &\lesssim \int_{r^{-q}}^\infty \frac{1}{y (\log(r_0 y))^\beta} \dd y\\
  &\lesssim \frac{1}{\big(\log(r_0 r^{-q})\big)^{\beta-1}}\\
  &\lesssim \frac{1}{\lvert \log(r) \rvert^{\beta-1}}.
\end{align*}

\subsubsection{Case $q\ge 1$.}

The case $q\ge1$ of Theorem \ref{theo:mainHol} is dealt with in a similar way. Let $\alpha,\gamma$ be such that $\gamma>q$ and $\gamma+1-\alpha\ge q$ and let $A\in\C^\alpha(\mathbb{T})$ be differentiable near $0$ with $A'(r)=O_{r\to0}(r^\gamma)$. We will be done if we can apply Theorem \ref{theo:nuem}, for which we have left to check that $A$ is $\omega_\alpha$-flat. With Lemma \ref{lemm:flatbis} in mind, we observe that
there is a neighborhood $N$ of $0$ where $A'(r)\le C r^\alpha$ and outside which $T_q$ is uniformly expanding, so that for some $\lambda>1$ and all $x,y$, if $x_1,y_1$ are inverse images of $x,y$ which are paired by the natural coupling, either $d(x_1,y_1)\le d(x,y)/\lambda$ or $x_1,y_1\in N$ and $d(x_1,y_1)\le c(d(x,y))$. Lemma \ref{lemm:flatbis} shows that it suffices to check flatness for pairs of backward orbits $\bar x,\bar y$ such that $(x_n,y_n)\in N^2$ for all $n\in\{1,\dots, t\}$. Setting $r=d(x_1,y_1)$ we then have $d(x_n,y_n)\le (an+r^{-q})^{-\frac1q}$ and $\lvert x_n\rvert, \lvert y_n\rvert \le C n^{-\frac1q}$ for some $a,C>0$ and all $n$ (Lemma \ref{lemm:cdecay}). It follows from the Mean Value Theorem that
\begin{align*}
\Big\lvert \sum_{n=1}^t A(x_n)- A(y_n) \Big\rvert
  &\lesssim \sum_{n=1}^t n^{-\frac{\gamma}{q}}(an+r^{-q})^{-\frac1q} \\
  &\lesssim  \sum_{an\le r^{-q}} n^{-\frac{\gamma}{q}}(r^{-q})^{-\frac1q} + \sum_{an> r^{-q}}  n^{-\frac{\gamma}{q}}(an)^{-\frac1q}  \\
  &\lesssim \sum_{an\le r^{-q}} n^{-\frac{\gamma}{q}}r  +  \sum_{an> r^{-q}} n^{-\frac{\gamma+1}{q}} \\
  &\lesssim r+r^{\gamma+1-q} \qquad\text{(since }\gamma>q\text{)}\\
  &\lesssim d(x,y)^\alpha,
\end{align*}
finishing the proof of Theorem \ref{theo:mainHol}.

\subsection{Non-uniformly expanding maps with a weak neutral point}

Let us now prove Theorem \ref{theo:mainless-tangent}. We consider $q>0$ and the map
\begin{align*}
T_{q\log} : \mathbb{T} &\to \mathbb{T} \\
  x &\mapsto \begin{cases} 0 & \text{if } x=0 \\
             \big( 1+ (1-\log 2x)^{-q}\big)x & \text{if } x\in(0,\frac12]\\
             2x-1 &\text{if }x\in[\frac12,1)
             \end{cases}     
\end{align*}
whose uniform backward walk $\mchain{M}$ is weakly contracting, with contraction function of the form
\[c(r) = \Big(1-\frac{1}{\big(\log \frac1r \big)^{q}} \Big) r + o_{r\to0}\Big(\frac{r}{\big(\log \frac1r \big)^{q}}\Big).\]
Lemma \ref{lemm:clogdecay} tells us that for some $a>0$,
\[ c^n(r) \le e^{-\big(an+(\log \frac1r )^{q+1} \big)^{\frac{1}{q+1}}}.\] 
Given any $\alpha\in(0,1)$, for all $r>0$ we thus have:
\begin{align*}
\sum_{n\ge 1} \omega_{\alpha+q\log}(c^n(r))
  &\le \sum_{n\ge 1} \frac{e^{-\alpha\big(an+(\log \frac1r )^{q+1} \big)^{\frac{1}{q+1}}}}{\big(an+(\log \frac1r )^{q+1} \big)^{\frac{q}{q+1}}} \\
  &\le \int_0^\infty \frac{e^{-\alpha\big(ax+(\log \frac1r )^{q+1} \big)^{\frac{1}{q+1}}}}{\big(ax+(\log \frac1r )^{q+1} \big)^{\frac{q}{q+1}}} \dd x \\
  &\lesssim \int_{(\log\frac{1}{r})^{q+1}}^\infty \frac{e^{-\alpha y^{\frac{1}{q+1}}}}{y^{\frac{q}{q+1}}} \dd y\\
  &\lesssim e^{-\alpha \big((\log\frac{1}{r})^{q+1}\big)^{\frac{1}{q+1}}} = r^\alpha,
\end{align*}
so that Lemma \ref{lemm:flat} ensures that every potential
$A\in\C^{\alpha+q\log}(\mathbb{T})$ is flat with respect to $\omega_\alpha$ and the natural coupling $\mchain{P}$. Then Theorem \ref{theo:nuem} can be applied and Theorem \ref{theo:mainless-tangent} follows.

\subsection{General non-uniformly expanding maps}

We turn to the proof of Theorem \ref{theo:mainDense}.
 We consider $\alpha\in(0,1]$ and a $k$-to-$1$ map $T:\mathbb{T}\to\mathbb{T}$ with a neutral fixed point at $0$ and such that for all neighborhood $N$ of $0$, there is some $\lambda=\lambda(N)>1$ such that $|T'|\ge \lambda$ outside $N$. Taking $N$ small enough we can assume that for some $\lambda$ and all $x,y\in\mathbb{T}$, given the permutations $\sigma,\eta$ defining a pairing of their inverse images $x^1,\dots,x^k,y^1,\dots,y^k$ to nearest neighbors, we have $d(x^{\sigma(j)},y^{\eta(j)})\le d(x,y)/y$ except possibly for one pair $(x^{\sigma(1)},y^{\eta(1)})\in N^2$ (this can be considered a suitable interpretation of the hypothesis that $T$ is non-uniformly expanding in order to include non-derivable maps).

It follows at once from Lemma \ref{lemm:flatbis} that every potential $A\in\C^\alpha(\mathbb{T})$ which is constant on a neighborhood $N$ of $0$ is $\omega_\alpha$-flat, and Theorem \ref{theo:nuem} ensures that $\op{L}_A$ satisfies a Ruelle-Perron-Frobenius theorem with a spectral gap.

The fact that potentials which are constant in some neighborhood of $0$ are dense in $\C^\alpha(\mathbb{T})$ with respect to $\lVert\cdot\rVert_\gamma$ for all $\gamma\in(0,\alpha)$ is a classical fact, but let us give a proof nonetheless.

\begin{proof}[Proof of denseness of $V$]
For any $B\in \C^\alpha(\mathbb{T})$ and any $\varepsilon>0$, let $A$ be the potential such that $A(x)=B(0)$ for all $x$ at distance at most $\varepsilon/2$ from $0$, $A(x)=B(x)$ for all $x$ at least $\varepsilon$ away from $0$, and which is affine on both $[\varepsilon/2,\varepsilon]$ and $[1-\varepsilon,1-\varepsilon/2]$. Then $A$ is $\alpha$-H\"older with $\Hol_\alpha(A)\lesssim \Hol_\alpha(B)$, and for all $x\in\mathbb{T}$ we moreover have
\[\lvert A(x)-B(x)\rvert \lesssim \Hol_\alpha(B) \varepsilon^{\alpha-\gamma} d(x,0)^\gamma.\]
This already shows that $A$ and $B$ are close in the uniform norm. Let $x,y\in\mathbb{T}$ and first assume $d(x,y)\ge \varepsilon$. Then either $A$ and $B$ coincide at both $x$ and $y$, or they coincide at one of them ($y$, say) and $d(x,0)\le \varepsilon \le d(x,y)$; then
\[\lvert (A-B)(x) - (A-B)(y) \rvert \lesssim \Hol_\alpha(B) \varepsilon^{\alpha-\gamma} d(x,0)^\gamma \lesssim  \varepsilon^{\alpha-\gamma} d(x,y)^\gamma.\]
In the case when $d(x,y)\le \varepsilon$ we can estimate directly
\[\lvert (A-B)(x) - (A-B)(y) \rvert \lesssim \Hol_\alpha(B) d(x,y)^\alpha\lesssim \varepsilon^{\alpha-\gamma} d(x,y)^\gamma.\]
We deduce that $\lVert A-B\rVert_\gamma \lesssim \varepsilon^{\alpha-\gamma}$, concluding the proof.
\end{proof}

\subsection{Uniformly expanding maps}

We conclude by the proof of Theorem \ref{theo:mainUniform} and Corollary \ref{coro:mainAcim}.
Assume that $T:\Omega\to\Omega$ is a $k$-to-$1$ uniformly expanding map, by which we means that the uniform backward random walk $\mchain{M}=(m_x)_{x\in\Omega}$ defined by
\[m_x = \sum_{y\in T^{-1}(x)} \frac1k \delta_y\]
is $\lambda$ contracting for some $\lambda>1$ (which in our terminology is the same as being weakly contracting with contraction function $c(r)=r/\lambda$).
This definition in particular includes the classical definition, i.e. the case when $\Omega$ is a manifold and $\lVert D_xT(v) \rVert \ge \lambda \lVert v\rVert$ for all $x\in\Omega$ and all $v\in T_x\Omega$ (as briefly explained in \cite{KLS}, Example 2.9).

Consider a parameter $\beta>1$ and a potential $A\in \C^{\beta\log}(\Omega)$. We will again apply Lemma \ref{lemm:flat}, observing that
\begin{align*}
\sum_{n\ge 1} \omega_{\beta\log}(c^n(r)) 
  &= \sum_{n\ge 1} \frac{1}{\big(\log(\frac{r_0}{r}\lambda^{n})\big)^\beta} \\
  &\le \int_0^\infty \frac{1}{\big(x\log\lambda+\log\frac{r_0}{r}\big)^\beta} \dd x \\
  &\lesssim \int_{\log\frac{r_0}{r}} y^{-\beta} \dd y\\
  &\lesssim \big(\log\frac{1}{r}\big)^{1-\beta} \\
  &\lesssim \omega_{(\beta-1)\log}(r);
\end{align*}
it follows that $A$ is $\omega_{(\beta-1)\log}$-flat. Theorem \ref{theo:nuem} then implies Theorem \ref{theo:mainUniform}.

Now, assume further that $\Omega$ is a connected Riemannian manifold  and that $T$ is $\C^1$. If the Jacobian $JT$ is of regularity $\C^{\beta\log}$ for some $\beta>1$, then the ``natural'' potential $A=-\log JT$ can be applied Theorem \ref{theo:mainUniform}. But this potential is named natural for the reason that if $\nu$ denotes the Riemannian volume (normalized to have total mass $1$), then $\op{L}_A^*(\nu)=\nu$; from Theorem \ref{theo:mainUniform} we know that there is a positive eigenfunction $f\in \C^{(\beta-1)\log}(\mathbb{T})$, and $f\dd\nu$ is $T$-invariant (it is the eigenprobability for $\op{L}_{\tilde A}$ where $\tilde A$ is the normalized potential). The polynomial decay of correlations also follows from Theorem \ref{theo:mainUniform}, ending the proof of Corollary \ref{coro:mainAcim}.

\bibliographystyle{alpha}
\bibliography{WeaklyExp}
\end{document}